\newcommand{\RR}{\mathbb{R}}
\newcommand{\ZZ}{\mathbb{Z}}
\newcommand{\NN}{\mathbb{N}}
\newcommand{\HH}{\mathbb{H}}
\renewcommand{\SS}{\mathbb{S}}
\newcommand{\II}{\mathds{1}}
\renewcommand{\epsilon}{\varepsilon}
\renewcommand{\emptyset}{\varnothing}
\newcommand{\st}{\, \colon \,}
\newcommand{\Homeo}{\operatorname{Homeo}}
\newcommand{\Isom}{\operatorname{Isom}}
\newcommand{\Aut}{\operatorname{Aut}}
\newcommand{\id}{\operatorname{id}}
\newcommand{\PSL}{\operatorname{PSL}}
\newcommand{\SO}{\operatorname{SO}}
\newcommand{\interior}{\operatorname{int}}
\renewcommand{\Im}{\operatorname{Im}}
\newcommand{\diam}{\operatorname{diam}}
\newcommand{\ol}[1]{\overline{#1}}
\newcommand{\supp}{\operatorname{supp}}
\newcommand{\Sub}{\operatorname{Sub}}
\newcommand{\abs}[1]{\left | #1 \right |}
\newcommand{\norm}[1]{ \left \| #1 \right \|}
\newcommand{\tr}{\operatorname{tr}}
\newcommand{\Subd}{\Sub_{\mathrm{d}}}
\newcommand{\Subdtf}{\Sub_{\mathrm{dtf}}}
\newcommand{\IRS}{\operatorname{IRS}}
\newcommand{\Prob}{\operatorname{Prob}}
\newcommand{\vol}{\operatorname{vol}}
\newcommand{\Sym}{\operatorname{Sym}}
\newcommand{\InjRad}{\operatorname{InjRad}}
\renewcommand{\phi}{\varphi}
\newcommand{\MCG}{\operatorname{MCG}}
\newcommand{\augTeich}{\widehat{\mc{T}}}
\newcommand{\augModuli}{\widehat{\mc{M}}}
\newcommand{\Teich}{\mc{T}}
\newcommand{\Moduli}{\mc{M}}
\newcommand{\IRSModuli}{\overline{\Moduli}^\text{\tiny{IRS}}}
\newcommand{\Rep}{\mc{R}}
\newcommand{\stab}{\operatorname{stab}}
\newcommand{\Deck}{\operatorname{Deck}}
\newcommand{\res}{\operatorname{res}}
\newcommand{\acts}{\curvearrowright}
\newcommand{\Cay}{\operatorname{Cay}}
\newcommand{\Lattices}{\mathcal{L}}
\newcommand{\conv}{\operatorname{conv}}
\newcommand{\Inn}{\operatorname{Inn}}
\newcommand{\Out}{\operatorname{Out}}
\newcommand{\ax}{\operatorname{ax}}
\newcommand{\PMCG}{\operatorname{PMCG}}
\renewcommand{\hat}[1]{\widehat{#1}}
\renewcommand{\tilde}[1]{\widetilde{#1}}
\newcommand{\im}{\operatorname{im}}
\renewcommand{\abs}[1]{\left| #1 \right|}
\renewcommand{\sinh}{\operatorname{sinh}}
\newcommand{\arcosh}{\operatorname{arcosh}}
\newif\ifshowutils
\newcommand{\p}[1]{\left( #1 \right)}
\newcommand{\mc}[1]{\mathcal{#1}}
\newcommand{\mb}[1]{\mathbf{#1}}
\newcommand{\mf}[1]{\mathfrak{#1}}
\theoremstyle{plain}
\newtheorem*{thm*}{Theorem}
\newtheorem{thm}{Theorem}[subsection]
\newtheorem{prop}[thm]{Proposition}
\newtheorem{cor}[thm]{Corollary}
\newtheorem{lemma}[thm]{Lemma}
\theoremstyle{definition}
\newtheorem{defn}[thm]{Definition}
\newtheorem{propdefn}[thm]{Proposition and Definition}
\theoremstyle{remark}
\newtheorem{remark}[thm]{Remark}
\newtheorem{example}[thm]{Example}
\definecolor{inkscapepurple}{rgb}{0.50196078,0,0.50196078}
\definecolor{inkscapeblue}{rgb}{0,0,1}
\title{On the IRS compactification of moduli space}
\author{Yannick Krifka}
\thanks{Parts of this material are based upon work supported by the National Science Foundation under Grant No.\ 1440140, while the author was in residence at the Mathematical Sciences Research Institute in Berkeley, California, during the fall semester 2019.}
\date{\today}
\begin{document}

\begin{abstract}
	In \cite{gelanderIRS15} Gelander described a new compactification of the moduli space of finite area hyperbolic surfaces using invariant random subgroups. The goal of this paper is to relate this compactification to the classical augmented moduli space, also known as the Deligne--Mumford compactification. We define a continuous finite-to-one surjection from the augmented moduli space to the IRS compactification. The cardinalities of this map's fibers admit a uniform upper bound that depends only on the topology of the underlying surface. 
\end{abstract}
	
	\maketitle

	\tableofcontents


\section{Introduction}

In recent years invariant random subgroups became a popular subject to study as they generalize lattices in locally compact groups; see \cite{gelanderIRS15, gelanderICM} and the references therein. An \emph{invariant random subgroup}, or \emph{IRS} for short, of a locally compact group $G$ is, by definition, a Borel probability measure $\mu$ on the space of closed subgroups $\Sub(G)$ of $G$ that is invariant under the conjugation action of $G$ on $\Sub(G)$. Here the space of closed subgroups is equipped with the Chabauty topology which turns it into a compact Hausdorff space. Many results about lattices find natural generalizations to results about invariant random subgroups. For example, there is a version of the Borel density theorem, the Kazhdan--Margulis theorem and the Stuck--Zimmer rigidity theorem for invariant random subgroups; see \cite{gelanderIRS15, gelanderICM}.

It turns out that invariant random subgroups arise naturally in the study of probability measure preserving actions of $G$. In fact, if $G$  acts via measure preserving transformations on a countably separated probability space $(X,\nu)$, pushing forward the measure $\nu$ via the stabilizer map
\begin{align*}
\stab \colon X &\longrightarrow \Sub(G), \\
x &\longmapsto \stab_G(x)
\end{align*}
induces an IRS $\mu \coloneqq \stab_*(\nu) \in \IRS(G)$. Moreover, it can be shown that every IRS arises in this fashion; see \cite{7samurai, AGV14gel,  gelanderIRS15, gelanderICM}.

Now, it is easy to see that every lattice $\Gamma \leq G$ amounts to an invariant random subgroup. Indeed, there is a unique invariant Borel probability measure $\nu_\Gamma$ on the quotient $\Gamma \backslash G$. Hence, we have a probability measure preserving action of $G$ on $(\Gamma \backslash G, \nu_\Gamma)$ given by 
\[ g * \Gamma h = \Gamma (h g^{-1}) \qquad \forall g \in G \quad \forall \Gamma h \in \Gamma \backslash G.\]
We obtain by the above construction an IRS $\mu_\Gamma \coloneqq \stab_*(\nu_\Gamma)$.

In this case the stabilizer of a coset $\Gamma g \in \Gamma \backslash G$ is $\stab_G(\Gamma g) = g^{-1} \Gamma g$. Therefore, we may think of the IRS $\mu_\Gamma$ as the push-forward of the measure $\nu_\Gamma$ via the map
\begin{align*}
\phi_\Gamma \colon \Gamma \backslash G & \longrightarrow \Sub(G),\\
\Gamma g &\longmapsto g^{-1} \Gamma g.
\end{align*}
Using the invariance of $\nu_\Gamma$ one verifies that $\mu_\Gamma$ only depends on the conjugacy class of $\Gamma$ in $G$; see Lemma \ref{lem:IRSconjuginv}.

This quite general construction has an interesting application in the case of torsion-free lattices $\Gamma$ in $G \coloneqq \PSL(2,\RR) \cong \Isom^+(\HH^2)$. Given such a lattice $\Gamma$ the quotient $X = \Gamma \backslash \HH^2$ is a hyperbolic surface of finite topological type. 
Vice versa, every hyperbolic surface $X$ with finite area is a quotient $X=\Gamma \backslash \HH^2$ where $\Gamma \leq G$ is a torsion-free lattice. Two such surfaces $\Gamma \backslash \HH^2$ and $\Gamma' \backslash \HH^2$ are isometric if and only if $\Gamma$ and $\Gamma'$ are conjugate. Therefore, isometry classes of finite area hyperbolic surfaces are in one-to-one correspondence with conjugacy classes of torsion-free lattices in $G$.

We fix an orientable topological surface $\Sigma$ with negative Euler characteristic $\chi(\Sigma) < 0$ and no boundary.
Now, we can identify the quotient space $G \backslash \Lattices(\Sigma)$ of all $G$-conjugacy classes of lattices in $\Lattices(\Sigma)$ with the moduli space $\Moduli(\Sigma)$ of all finite area hyperbolic metrics on $\Sigma$; see Proposition \ref{prop:ModuliAsLattices}.
Here $\Lattices(\Sigma) \subset \Sub(G)$ is the subspace of all torsion-free lattices $\Gamma \leq G$ such that $\Gamma \backslash \HH^2$ is homeomorphic to  $\Sigma$.

Because the invariant random subgroup $\mu_\Gamma$ depends only on the conjugacy class $[ \Gamma ] \in G \backslash \Lattices(\Sigma)$ we obtain a well-defined map
\begin{align*}
\iota \colon \Moduli(\Sigma) &\longrightarrow \IRS(G),\\
[\Gamma] &\longmapsto \mu_\Gamma
\end{align*}
via the identification $\Moduli(\Sigma) \cong G \backslash \Lattices(\Sigma)$. We prove in Proposition \ref{prop:topembedding} that $\iota$ is a topological embedding.

Note that the space of invariant random subgroup $\IRS(G) \subseteq \Prob(\Sub(G))$ is a compact Hausdorff space. Therefore, one may take the closure $\ol{\iota(\Moduli(\Sigma))}$ to obtain a compactification, the so called \emph{IRS compactification $\IRSModuli(\Sigma)$ of the moduli space $\Moduli(\Sigma)$}. This term was coined by Gelander in \cite{gelanderIRS15} where the above construction was described for the first time.

It is natural to ask what can be said about this compactification; see Problem 3.2 in \cite{gelanderIRS15}. The objective of this paper is to answer this question by relating the IRS compactification $\IRSModuli(\Sigma)$ to the classical Deligne--Mumford compactification $\augModuli(\Sigma)$. In this paper we regard the Deligne--Mumford compactification as the \emph{augmented moduli space $\augModuli(\Sigma)$} that is the quotient of augmented Teichm\"uller space $\augTeich(\Sigma)$ by its mapping class group action; see subsections \ref{subsect:AugTeich} and \ref{subsect:aug_moduli} for details.

Intuitively, a point $\mb{x} \in \augModuli(\Sigma)$ in the augmented moduli space comprises the data of a nodal surface $\mb{X} \coloneqq \Sigma / \sigma$, where a family of disjoint simple closed curves $\sigma$ in $\Sigma$ is collapsed to nodes, and a hyperbolic metric of finite area on every complementary component $\Sigma' \in c(\sigma) \coloneqq \pi_0(\Sigma \setminus \sigma)$. Hence, there is a torsion-free lattice $\Gamma(\Sigma') \leq G$ for every component $\Sigma' \in c(\sigma)$ such that $\Gamma(\Sigma') \backslash \HH^2 \cong \Sigma'$. 

Keeping this description in mind there is a rather natural map 
\[\Phi \colon \augModuli(\Sigma) \longrightarrow \IRSModuli(\Sigma).\] 
Indeed, let $\mb{x} \in \augModuli(\Sigma)$ be a point in the augmented moduli space as in the previous paragraph. For every component $\Sigma' \in c(\sigma)$ there is a torsion-free lattice $\Gamma(\Sigma')$ describing the hyperbolic metric on $\Sigma' \cong \Gamma(\Sigma') \backslash \HH^2$ and we obtain a corresponding invariant random subgroup $\mu_{\Gamma(\Sigma')}$ as before. It is well known that the area of a hyperbolic surface is a topological invariant $\vol(\Gamma(\Sigma') \backslash \HH^2) = - 2\pi \chi(\Sigma')\eqqcolon \vol(\Sigma')$. Moreover, $\chi(\Sigma) = \sum_{\Sigma' \in c(\sigma)} \chi(\Sigma')$ and we define
\[ \Phi(\mb{x}) \coloneqq \sum_{\Sigma' \in c(\sigma)} \frac{\vol(\Sigma')}{\vol(\Sigma)} \cdot \mu_{\Gamma(\Sigma')} = \sum_{\Sigma' \in c(\sigma)} \frac{\chi(\Sigma')}{\chi(\Sigma)} \cdot \mu_{\Gamma(\Sigma')} \in \IRS(G),\]
which is a convex combination of invariant random subgroups.

The main result of this paper is Theorem \ref{thm:main}:
\begin{thm*}[\ref{thm:main}]
	The map 
	\[ \Phi \colon \augModuli(\Sigma) \longrightarrow \IRSModuli(\Sigma)\]
	is a continuous finite-to-one surjection extending the topological embedding $\iota \colon \Moduli(\Sigma) \hookrightarrow \IRSModuli(\Sigma)$. There is a uniform upper bound $B(\Sigma)>0$, which depends only on the topology of $\Sigma$, such that $\# \Phi^{-1}(\mu) \leq B(\Sigma)$ for all $\mu \in \IRSModuli(\Sigma)$.
\end{thm*}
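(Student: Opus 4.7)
My plan is to verify the four claims of the theorem in turn: (a) $\Phi$ is well-defined and extends $\iota$; (b) $\Phi$ is continuous; (c) $\Phi$ is surjective onto $\IRSModuli(\Sigma)$; and (d) the fibers of $\Phi$ are finite, with a uniform bound depending only on the topology of $\Sigma$. Well-definedness is immediate: the lattice $\Gamma(\Sigma')$ attached to each component is determined up to conjugacy in $G = \PSL(2,\RR)$ by the isometry class of $\Sigma'$, and Lemma \ref{lem:IRSconjuginv} ensures $\mu_{\Gamma(\Sigma')}$ depends only on this conjugacy class. When the pinched multicurve $\sigma$ is empty, $\Phi(\mb{x}) = \mu_{\Gamma(\Sigma)} = \iota(\mb{x})$, so $\Phi$ extends $\iota$.

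For continuity, I would reinterpret $\Phi(\mb{x})$ via a sampling procedure: distribute probability mass on the disjoint union of components of $\mb{x}$ proportionally to hyperbolic area (equivalently, to $\abs{\chi(\Sigma')}$), lift a random orthonormal frame to $\HH^2$, and push forward the stabilizer map into $\Sub(G)$. The coefficient $\chi(\Sigma')/\chi(\Sigma)$ in the definition of $\Phi$ is precisely the area proportion of $\Sigma'$. Now suppose $\mb{x}_n \to \mb{x}$ in $\augModuli(\Sigma)$. Lifting to $\augTeich(\Sigma)$, we may assume the nodal multicurves $\sigma_n$ of $\mb{x}_n$ are sub-multicurves of the nodal multicurve $\sigma$ of $\mb{x}$, with bounded-geometry pieces converging smoothly. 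The collar lemma together with Mumford compactness of thick parts implies that on each component the pushforward of the normalized frame measure converges weakly in $\Prob(\Sub(G))$ to the corresponding piece of $\Phi(\mb{x})$, while the thin tubes around pinching curves contribute $o(1)$-mass as their widths shrink. Summing these contributions yields $\Phi(\mb{x}_n) \to \Phi(\mb{x})$.

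For surjectivity, take $\mu \in \IRSModuli(\Sigma) = \overline{\iota(\Moduli(\Sigma))}$ and choose $X_n \in \Moduli(\Sigma)$ with $\iota(X_n) \to \mu$. The augmented moduli space $\augModuli(\Sigma)$ is compact, so after passing to a subsequence $X_n \to \mb{x} \in \augModuli(\Sigma)$, and by continuity $\Phi(\mb{x}) = \mu$. For the finite-to-one claim, injectivity of $\iota$ (Proposition \ref{prop:topembedding}) combined with the ergodicity of each $\mu_{\Gamma(\Sigma')}$ as an IRS allows us to recover from $\Phi(\mb{x})$ the multiset of weighted isometry classes $\{ (\mu_{\Gamma(\Sigma')}, \chi(\Sigma')/\chi(\Sigma)) \st \Sigma' \in c(\sigma) \}$. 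Consequently, $\Phi^{-1}(\Phi(\mb{x}))$ is in bijection with the ways of assembling this fixed multiset into a nodal surface homeomorphic to $\Sigma/\sigma'$ for some multicurve $\sigma'$ in $\Sigma$, up to the mapping class group.

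To produce the explicit uniform bound $B(\Sigma)$, I would count two ingredients: (i) the number of $\MCG(\Sigma)$-orbits of multicurves $\sigma$ in $\Sigma$ is finite and bounded purely in terms of the genus and puncture count of $\Sigma$; (ii) for each such orbit, the number of ways of assigning the prescribed isometry types of cusped hyperbolic surfaces to the topological pieces of $\Sigma \setminus \sigma$ is at most $k!$, where $k \leq 3g-3+n$ is the number of complementary components. The product is bounded in terms of $(g,n)$ alone, giving the desired $B(\Sigma)$. The main obstacle will be the continuity step: showing weak-$*$ convergence of $\Phi(\mb{x}_n)$ requires carefully controlling the Chabauty-topology convergence of sampled stabilizers through the approximate isometries of thick parts while simultaneously bounding the measure carried by degenerating collars.
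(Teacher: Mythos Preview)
Your overall architecture matches the paper's: well-definedness and surjectivity are handled identically, and your use of ergodicity of each $\mu_{\Gamma(\Sigma')}$ to recover the weighted multiset is a valid alternative to the paper's linear-independence argument (Lemma~\ref{lem:LinearIndependenceIRS}).

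For continuity, your frame-sampling heuristic is correct in spirit, but you have not isolated the one statement that actually makes it work. When $\mb{x}_n \in \Moduli(\Sigma)$ and you sample a frame in the thick part of the piece corresponding to $\Sigma' \in c(\sigma)$, the stabilizer you record is a conjugate of the \emph{full} lattice $\Gamma_n = \im\rho_n$, not of the subgroup $\Gamma_n(\Sigma') = \im(\rho_n \circ \iota_{\Sigma'})$. Saying that ``bounded-geometry pieces converge smoothly'' only gives $\Gamma_n(\Sigma') \to \Gamma(\Sigma')$; you still need that $g_n(\Sigma')^{-1}\,\Gamma_n\,g_n(\Sigma') \to \Gamma(\Sigma')$ in the Chabauty topology, i.e.\ that no extra elements of $\Gamma_n$ survive in the limit. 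This is the content of the paper's Lemma~\ref{lem:FullGroupConv}, and it genuinely requires the collar lemma to rule out short essential curves crossing into $\Sigma'$. Once this is in hand, the paper packages the ``thin tubes contribute $o(1)$'' step concretely via $L^1$-convergence of characteristic functions of truncated Dirichlet domains (Lemma~\ref{lem:L1ConvTruncDom}) rather than an abstract measure estimate; your approach would work but needs the same lemma.

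Your fiber count has a gap. Bounding by (number of $\MCG$-orbits of multicurves)$\times k!$ accounts only for which isometry type lands on which topological component. It omits that a single isometry class in $\Moduli(\Sigma')$ can lift to up to $p(\Sigma')!$ distinct points in $\Moduli^*(\Sigma')$, i.e.\ the same hyperbolic piece can be attached with its cusps permuted, and such permutations need not be induced by elements of $\MCG(\Sigma)$ fixing $\sigma$. The paper handles this by factoring through $\prod_{\Sigma'}\Moduli^*(\Sigma')$ and inserting the extra factor $\prod_{\Sigma'} p(\Sigma')! \le (|\chi(\Sigma)|+2)!^{|\chi(\Sigma)|}$; you should do the same to obtain a correct uniform bound.
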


Let us explain informally the upper bound on the cardinality of the preimage.
Given a point $\mb{x} \in \augModuli(\Sigma)$ in the preimage of $\mu  = \sum_{i=1}^m \lambda_i \cdot \mu_{\Gamma_i} \in \IRSModuli(\Sigma)$ we know the hyperbolic surfaces $\Gamma_1 \backslash \HH^2, \ldots, \Gamma_m \backslash \HH^2$ that arise as the components of its nodal surface $\mb{X}$. However, one does not have any information on how these fit together to form the nodal surface $\mb{X}$. One can show that the number of different ways these components may fit together depends only on the topology of the underlying surface $\Sigma$.

\subsection*{Outline of the paper}

In section \ref{sect:aug_mod_space} we review basic notions of Teichm\"uller theory. Moreover, we give an interpretation of the augmented Teichm\"uller space $\augTeich(\Sigma)$ by means of (admissible) representations and restriction maps. The augmented moduli space $\augModuli(\Sigma)$ is then introduced as the quotient of augmented Teich\"uller space $\augTeich(\Sigma)$ by the mapping class group action. 

The space of closed subgroups $\Sub(G)$ together with its Chabauty topology is introduced in section \ref{sect:space_of_closed_subgroups}. We define the geometric topology on the space of admissible representations and prove in Proposition \ref{prop:geometric_equals_algebraic} that it coincides with the algebraic topology. This is later used to prove that the moduli space $\Moduli(\Sigma)$ is in fact homeomorphic to $G \backslash \Lattices(\Sigma)$.

In section \ref{sect:irs} we introduce invariant random subgroups. First, we prove that the map $\iota \colon \Moduli(\Sigma) \longrightarrow \IRS(G)$ is a topological embedding. We then define $\Phi \colon \augModuli(\Sigma) \longrightarrow \IRSModuli(\Sigma)$ and prove Theorem \ref{thm:main} in subsection \ref{subsect:augmodandirs}.

The main tool in the proofs of continuity of both $\iota \colon \Moduli(\Sigma) \hookrightarrow \IRS(G)$  and $\Phi \colon \augModuli(\Sigma) \longrightarrow \IRSModuli(\Sigma)$ is Lemma \ref{lem:L1ConvTruncDom}. Although, it seems to be quite classical we could not find a reference in the literature. Therefore, we decided to include a complete proof using elementary hyperbolic geometry in section \ref{section:ProofOfLemma}.

\subsection*{Acknowledgements}

The author would like to thank his advisor Alessandra Iozzi for her guidance and many helpful comments. Also, he would like to thank Marc Burger for many inspiring conversations, and the Mathematical Sciences Research Institute in Berkeley for its hospitality during the research program ``Holomorphic Differentials in Mathematics and Physics'' during the fall of 2019.


\section{Augmented Moduli Space} \label{sect:aug_mod_space}

\subsection{Teichm\"uller Space} \label{subsect:teich_space}

Throughout this paper let $\Sigma$ be an oriented surface with negative Euler characteristic $\chi(\Sigma)<0$ of genus $g$ with $p$ punctures and no boundary. Further, we shall denote by $G := \PSL(2,\RR) \cong \Isom^+(\HH^2)$ the group of orientation preserving isometries of the hyperbolic plane $\HH^2$.

\begin{defn}
    A discrete and faithful representation
    \[ \rho \colon \pi_1(\Sigma) \longrightarrow G \]
    is called \emph{admissible} if it is the holonomy representation of an orientation preserving homeomorphism $f \colon \Sigma \longrightarrow X$ where $X$ is a hyperbolic surface. The set of all such representations is denoted by $\Rep^*(\Sigma)$. 
    
    If we additionally require that the hyperbolic surface $X$ above has finite area, we denote the resulting subset by $\Rep(\Sigma)$.
\end{defn}

\begin{defn}
    An element $\alpha \in \pi_1(\Sigma)$ is called \emph{peripheral} if it represents a curve homotopic to a puncture.
    
    An element $\alpha \in \pi_1(\Sigma)$ is called \emph{essential} if it is not peripheral.
\end{defn}

\begin{remark}\label{rem:PeripheralsParabolic}
    An admissible representation $\rho \in \Rep^*(\Sigma)$ is in $\Rep(\Sigma)$
    if and only if $\rho(\alpha) \in G$ is parabolic for every peripheral element $\alpha \in \pi_1(\Sigma)$.
\end{remark}

\begin{propdefn}
    The group $\pi_1(\Sigma)$ admits a finite generating set $S$, and the map
    \begin{align*}
        i \colon \Rep^*(\Sigma) &\hookrightarrow G^S,\\
        \rho &\mapsto (\rho(s))_{s \in S}
    \end{align*}
    is injective. We equip $G^S$ with the product topology and $\Rep^*(\Sigma)$ with the initial topology with respect to the injection $i$. This topology does not depend on the choice of generating set and is called the \emph{algebraic topology}.
    
    A sequence of representations $(\rho_n)_{n \in \NN} \subset \Rep^*(\Sigma)$ converges to $\rho \in \Rep^*(\Sigma)$ as $n \to \infty $ with respect to this topology if and only if $\rho_n(\gamma) \to \rho(\gamma)$ as $n \to \infty$ for every $\gamma \in \pi_1(\Sigma)$.
\end{propdefn}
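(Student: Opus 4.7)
The statement bundles together several routine verifications, so the plan is to dispatch them in order, with the independence of generating set being the only step requiring a genuine argument.

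First I would recall that $\pi_1(\Sigma)$ is finitely generated: since $\Sigma$ has finite type (genus $g$ with $p$ punctures), the fundamental group admits a standard presentation with $2g+p$ generators when $p \geq 1$, and $2g$ generators when $p=0$. Pick any such finite generating set $S$.

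For injectivity of $i$, note that a group homomorphism $\rho : \pi_1(\Sigma) \to G$ is completely determined by its values on any generating set; so if $i(\rho) = i(\rho')$, then $\rho(s) = \rho'(s)$ for all $s \in S$ and hence $\rho = \rho'$ on all of $\pi_1(\Sigma)$.

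The main step is independence of the topology from the choice of $S$. Given two finite generating sets $S$ and $S'$, each $s \in S$ can be expressed as a word $w_s$ in the letters of $S' \cup (S')^{-1}$. The word map
\[
W \colon G^{S'} \longrightarrow G^{S}, \qquad (g_{s'})_{s' \in S'} \longmapsto \bigl( w_s(g_{s'_1}, \ldots, g_{s'_k}) \bigr)_{s \in S},
\]
is continuous, since multiplication and inversion in $G$ are continuous. By construction $i_S = W \circ i_{S'}$, which shows that the $S$-initial topology is coarser than (or equal to) the $S'$-initial topology. Exchanging the roles of $S$ and $S'$ yields the reverse inequality, so the two initial topologies coincide. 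Thus the algebraic topology on $\Rep^*(\Sigma)$ is well defined.

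Finally, for the pointwise convergence characterization: if $\rho_n \to \rho$ in the algebraic topology, then $\rho_n(s) \to \rho(s)$ for every $s \in S$ by definition; expressing an arbitrary $\gamma \in \pi_1(\Sigma)$ as a word in $S \cup S^{-1}$ and using continuity of the group operations in $G$ yields $\rho_n(\gamma) \to \rho(\gamma)$. Conversely, pointwise convergence on all of $\pi_1(\Sigma)$ in particular gives convergence on $S$, hence $i(\rho_n) \to i(\rho)$ in $G^S$, which is precisely convergence in the algebraic topology. I do not anticipate any real obstacle here; the only slightly non-formal ingredient is the continuity of word maps in a topological group, which is immediate from the continuity of multiplication and inversion in $G = \PSL(2,\RR)$.
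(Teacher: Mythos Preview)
Your proof is correct and complete. The paper itself does not include a proof of this proposition/definition, treating it as a routine verification; your argument supplies exactly the standard details one would expect, and there is nothing to compare against.
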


\begin{propdefn}
    The group $G$ acts on $\Rep(\Sigma)$ resp.\ $\Rep^{*}(\Sigma)$ continuously from the left with closed orbits, and we denote the quotients by
    \[ \Teich(\Sigma) := G \backslash \Rep(\Sigma) \quad \text{resp.} \quad \Teich^*(\Sigma) := G \backslash \Rep^{*}(\Sigma)\]
    The quotient spaces are Hausdorff topological spaces, and $\Teich(\Sigma)$
    is called the \emph{Teichm\"uller space of $\Sigma$}.
\end{propdefn}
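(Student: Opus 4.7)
The action is the conjugation action $g \cdot \rho := g \rho g^{-1}$. It preserves both $\Rep^{*}(\Sigma)$ and $\Rep(\Sigma)$ (the latter because parabolicity is conjugation-invariant), and continuity follows immediately from the embedding $i \colon \Rep^{*}(\Sigma) \hookrightarrow G^{S}$ combined with continuity of the map $G \times G^{S} \to G^{S}$, $(g, (x_{s})) \mapsto (gx_{s}g^{-1})$. The substantive content is to show that orbits are closed and that $\Teich^{*}(\Sigma)$ is Hausdorff. My plan is to deduce both from the stronger claim that the action of $G$ on $\Rep^{*}(\Sigma)$ is \emph{proper}: since $G$ is locally compact Hausdorff and $\Rep^{*}(\Sigma)$ is Hausdorff, a proper continuous action yields a Hausdorff orbit space with closed orbits.

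Concretely, I would establish the following. If $\rho_{n} \to \rho$ in $\Rep^{*}(\Sigma)$ and $g_{n} \rho_{n} g_{n}^{-1} \to \rho'$ in $\Rep^{*}(\Sigma)$, then $(g_{n})$ subconverges in $G$. The key geometric input is that $\rho'$, being admissible, has non-elementary discrete image, so there exist $\alpha, \beta \in \pi_{1}(\Sigma)$ whose images $\rho'(\alpha), \rho'(\beta)$ are hyperbolic with four distinct fixed points $p_{1}', p_{2}', q_{1}', q_{2}' \in \partial \HH^{2}$. Since translation length on $G$ is continuous and invariant under conjugation, $\rho(\alpha)$ and $\rho(\beta)$ are themselves hyperbolic with the same translation lengths as $\rho'(\alpha), \rho'(\beta)$; if their axes coincided they would commute, forcing the same for $\rho'(\alpha), \rho'(\beta)$. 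Hence $\rho(\alpha), \rho(\beta)$ also possess four distinct boundary fixed points $p_{1}, p_{2}, q_{1}, q_{2}$, with $\Fix(\rho_{n}(\alpha)) \to \{p_{1}, p_{2}\}$ and $\Fix(\rho_{n}(\beta)) \to \{q_{1}, q_{2}\}$.

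The core step is a north--south dynamics argument. If $g_{n} \to \infty$ in $G$, then after passing to a subsequence there exist attracting and repelling points $\xi^{+}, \xi^{-} \in \partial \HH^{2}$ such that $g_{n} \cdot x \to \xi^{+}$ for every $x \in \ol{\HH^{2}} \setminus \{\xi^{-}\}$, uniformly on compact subsets. Because $\Fix(g_{n} \rho_{n}(\alpha) g_{n}^{-1}) = g_{n} \cdot \Fix(\rho_{n}(\alpha))$ converges to the distinct pair $\{p_{1}', p_{2}'\}$, and likewise for $\beta$, the repelling point $\xi^{-}$ would have to belong simultaneously to $\{p_{1}, p_{2}\}$ and $\{q_{1}, q_{2}\}$, which is impossible since these sets are disjoint. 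Hence $(g_{n})$ stays in a compact subset of $G$, and any accumulation point $g$ satisfies $g \rho g^{-1} = \rho'$ by joint continuity of the action.

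The main obstacle I anticipate is making the north--south dichotomy rigorous while tracking the perturbation $\Fix(\rho_{n}(\alpha)) \to \{p_{1}, p_{2}\}$ alongside a potential degeneration $g_{n} \to \infty$. The cleanest way to handle this is through the $G$-equivariant homeomorphism between $G$ and the space of ordered triples of pairwise distinct points in $\partial \HH^{2}$, obtained from the orbit map on any three distinct boundary points: once three of the boundary limits among $p_{i}', q_{j}'$ are verified to be distinct, inverting this homeomorphism produces the required convergent subsequence of $(g_{n})$ directly, bypassing the need for a separate case analysis.
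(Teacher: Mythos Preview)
The paper states this proposition-definition without proof, treating it as standard background. Your plan is correct and is the usual route: properness of the conjugation action of $G$ on $\Rep^{*}(\Sigma)$ yields both closed orbits and a Hausdorff quotient, and your sequential formulation of properness is legitimate here since $G$ and $\Rep^{*}(\Sigma)\subset G^{S}$ are second countable.

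The ordered-triple argument you outline at the end is indeed the cleanest way to close. Once you label attracting and repelling fixed points (continuous functions on the hyperbolic locus of $G$), you obtain triples $(p_{1}^{(n)},p_{2}^{(n)},q_{1}^{(n)})\to(p_{1},p_{2},q_{1})$ of distinct boundary points whose $g_{n}$-images converge to the distinct triple $(p_{1}',p_{2}',q_{1}')$; simple transitivity of $\PSL(2,\RR)$ on positively oriented triples in $\partial\HH^{2}$ together with continuity of the inverse of the orbit map then forces $(g_{n})$ to converge. The north--south dynamics alternative also works but, as you anticipate, needs the extra observation that the convergence $g_{n}x\to\xi^{+}$ is uniform on compacta away from $\xi^{-}$, so that it applies to the moving points $p_{i}^{(n)}\to p_{i}\neq\xi^{-}$. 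Either way the argument is sound; nothing is missing.
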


\begin{defn}
    The \emph{translation length} of an element $g \in G$ is defined as
    \[ \ell(g) := \inf_{x \in \HH^2} d(g x,x). \]
\end{defn}

\begin{lemma}
    Let $g \in G$. Then
    \[ \ell(g) = 2 \arcosh\p{ \tfrac{1}{2} \cdot \max\p{2, \abs{\tr(g)}} }.\]
    
    In particular, $\Rep^*(\Sigma) \longrightarrow \RR, \rho \mapsto \ell(\rho(\gamma))$ is continuous for every $\gamma \in \pi_1(\Sigma)$.
\end{lemma}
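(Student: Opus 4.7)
The plan is to classify $g \in G = \PSL(2,\RR)$ by its type as a hyperbolic isometry, which is encoded by $|\tr(g)|$, and compute $\ell(g)$ case by case. Recall that $g$ is \emph{elliptic} if $|\tr(g)| < 2$ (it fixes a point in $\HH^2$), \emph{parabolic} if $|\tr(g)| = 2$ and $g\neq\id$ (it fixes a unique point at infinity), and \emph{hyperbolic} if $|\tr(g)| > 2$ (it preserves a geodesic axis and translates along it). Note that $|\tr|$ is well-defined on $\PSL(2,\RR)$ since the lifts to $\SL(2,\RR)$ differ by sign.

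In the elliptic case, some $x_0 \in \HH^2$ is fixed, so $d(gx_0,x_0)=0$ and $\ell(g)=0$. In the parabolic case I would conjugate $g$ to $z\mapsto z+1$; the standard formula $\cosh d(z+1,z) = 1 + \tfrac{1}{2\,\Im(z)^2}$ shows $d(gz,z)\to 0$ as $\Im(z) \to \infty$, so $\ell(g)=0$ (not attained). In both cases $\max(2,|\tr(g)|)=2$ and $2\arcosh(1)=0$, matching the formula.

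For the hyperbolic case I would conjugate to the diagonal form corresponding to $z\mapsto e^t z$ for some $t>0$, whose lift has trace $e^{t/2}+e^{-t/2} = 2\cosh(t/2)$, giving $|\tr(g)| = 2\cosh(t/2)$. Using the formula $\cosh d(z,w) = 1 + \tfrac{|z-w|^2}{2\,\Im(z)\Im(w)}$, a direct computation with $z=x+iy$ yields
\[ \cosh d(gz,z) = 1 + \tfrac{(e^t-1)^2}{2e^t}\bigl(1 + x^2/y^2\bigr), \]
which is minimized exactly when $x=0$ (i.e.\ on the imaginary axis) and equals $\cosh t$ there. Hence $\ell(g) = t = 2\arcosh(|\tr(g)|/2)$, and since $|\tr(g)|>2$ we have $\max(2,|\tr(g)|)=|\tr(g)|$, matching the formula.

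Continuity of $\rho \mapsto \ell(\rho(\gamma))$ then follows by composition: $\rho \mapsto \rho(\gamma) \in G$ is continuous by the definition of the algebraic topology, $g \mapsto |\tr(g)|$ is a continuous function on $\PSL(2,\RR)$, and $x \mapsto 2\arcosh\!\bigl(\tfrac{1}{2}\max(2,x)\bigr)$ is continuous on $[0,\infty)$ (the max regularises the behaviour at $x=2$, and $\arcosh$ is continuous on $[1,\infty)$). The main technical point of the proof is the explicit hyperbolic-distance computation identifying the axis as the minimizing locus; everything else is a standard reading-off of the trace/type dictionary.
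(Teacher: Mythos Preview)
Your proof is correct. The paper states this lemma without proof, treating the trace formula for translation length as a standard fact from hyperbolic geometry; your case-by-case computation via the classification of isometries and the explicit distance formula in the upper half-plane is exactly the expected verification, and the continuity claim follows as you say by composition.
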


\begin{defn}
    Let $\alpha, \beta \in \pi_1(\Sigma)$. The \emph{geometric intersection number $i(\alpha, \beta)$} is defined as
    \[ i(\alpha, \beta) := \min_{c_1, c_2} \# (c_1 \cap c_2) \]
    where the minimum is taken over all loops $c_1, c_2$ in the free homotopy classes of $\alpha, \beta$ respectively.
\end{defn}

\begin{lemma}[Collar Lemma; see {\cite[Corollary 4.1.2]{buser}}] \label{lem:CollarLemma}
    Let $\alpha, \beta \in \pi_1(\Sigma)$ and suppose that $\alpha$ is primitive and $i(\alpha, \beta) > 0$. If $\rho \in \Rep^*(\Sigma)$, then
    \[ \sinh\p{ \tfrac{\ell(\rho(\alpha))}{2}} \cdot \sinh \p{ \tfrac{\ell(\rho(\beta))}{2}} \geq 1. \]
    
    In particular, if $(\rho_n)_{n \in \NN} \subset \Rep^*(\Sigma)$ is a sequence such that $\ell(\rho_n(\alpha)) \to 0$ as $n \to \infty$, then $\ell(\rho_n(\beta)) \to \infty$ as $n \to \infty$.
\end{lemma}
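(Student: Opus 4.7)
The plan is first to reduce to the case where both $\alpha$ and $\beta$ are essential. A peripheral element can be freely homotoped into a neighborhood of its puncture, so it has zero geometric intersection number with every class; hence $i(\alpha,\beta) > 0$ forces both $\alpha$ and $\beta$ to be essential. For any $\rho \in \Rep^*(\Sigma)$ an essential element has hyperbolic image (parabolic holonomy in a discrete faithful representation corresponds precisely to a cusp end, i.e.\ to a peripheral conjugacy class), so $g := \rho(\alpha)$ and $h := \rho(\beta)$ are hyperbolic isometries of $\HH^2$ with positive translation lengths and well-defined axes $A_g, A_h$; in particular both sides of the inequality are strictly positive.

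Next I would translate the hypothesis into a configuration in $\HH^2$. Set $\Gamma := \rho(\pi_1(\Sigma))$, which is discrete and torsion-free. Primitivity of $\alpha$ together with torsion-freeness of $\Gamma$ yields $\stab_\Gamma(A_g) = \langle g\rangle$. Positivity of $i(\alpha,\beta)$ means that some lift of the geodesic representative of $\beta$ in $\Gamma\backslash\HH^2$ crosses $A_g$ transversely; such a lift is the axis $B := \phi A_h$ of some conjugate $\phi h \phi^{-1}$ with $\phi \in \Gamma$, whose translation length still equals $\ell(h)$. One thereby obtains a concrete two-dimensional picture: two distinct axes $A_g$ and $B$ crossing in $\HH^2$, with hyperbolic stabilizers of translation lengths $\ell(g)$ and $\ell(h)$ respectively, generating a non-elementary torsion-free discrete subgroup of $\PSL(2,\RR)$.

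The heart of the argument is then a standard hyperbolic-trigonometric estimate. The translates $g^n B$ are pairwise distinct (else some $g^n$ would stabilize $B$, forcing $A_g = B$) and each meets $A_g$ at a point, with consecutive meeting points separated by distance $\ell(g)$ along $A_g$. Discreteness and torsion-freeness of $\Gamma$ prevent the $g^n B$ from approaching $B$ too closely; the sharpest quantitative obstruction, classically extracted from the Saccheri quadrilateral spanned by the common perpendiculars from $A_g$ to $B$ and to $gB$, is precisely the stated inequality. Equivalently, $A_g$ admits a precisely $\langle g\rangle$-invariant $w$-neighborhood in $\HH^2$ with $\sinh(w)\sinh(\ell(g)/2) = 1$, and the geodesic $B$ must enter and exit this neighborhood, forcing $\ell(h) \geq 2w$. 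The main subtlety is that the classical Collar Theorem is stated for \emph{simple} closed geodesics whereas we assume only that $\alpha$ is primitive; carrying the argument through intrinsically in $\HH^2$ (rather than via an embedded collar on the possibly non-embedded projection to the quotient surface) bypasses this and is the content of the cited \cite[Corollary 4.1.2]{buser}.
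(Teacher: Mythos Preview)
The paper gives no proof and simply cites Buser; your sketch is essentially Buser's argument (reduce to two hyperbolic elements with crossing axes in $\HH^2$, then exploit the collar about $A_g$), and your reduction step---peripheral $\Rightarrow$ zero intersection number, essential $\Rightarrow$ hyperbolic image---is fine.

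There is, however, a gap where you try to upgrade from ``$\alpha$ simple'' (Buser's actual hypothesis in Corollary~4.1.2) to ``$\alpha$ primitive'' (the hypothesis as stated here). The assertion that $A_g$ admits a \emph{precisely} $\langle g\rangle$-invariant $w$-neighbourhood with $\sinh(w)\sinh(\ell(g)/2)=1$ is exactly the statement that the half-width-$w$ collar embeds in $\Gamma\backslash\HH^2$, i.e.\ that the projection of $A_g$ is simple; working upstairs in $\HH^2$ does not sidestep this. Your alternative phrasing via ``the common perpendiculars from $A_g$ to $B$ and to $gB$'' does not parse either, since $A_g$ \emph{meets} both $B$ and $gB$ and hence has no common perpendicular with them. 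Contrary to what you write, Buser's Corollary~4.1.2 does assume simplicity, not mere primitivity. For the paper's purposes this is harmless: every application of the lemma goes through the ``in particular'' clause, and once $\ell(\rho_n(\alpha))$ drops below the Margulis constant the corresponding primitive closed geodesic must be the simple core of its Margulis tube, so Buser's simple-curve inequality applies verbatim for all large $n$.
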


\subsection{Moduli Space} \label{subsect:moduli_space}

\begin{defn}
    The group
    \[ \MCG(\Sigma) := \Homeo^+(\Sigma) / \Homeo_\circ(\Sigma) \]
    is called the \emph{mapping class group of $\Sigma$}. Here $\Homeo_\circ(\Sigma)$ denotes the identity component in $\Homeo(\Sigma)$, i.e.\ all homeomorphisms isotopic to the identity.
    
    For an orientation preserving homeomorphism $f \colon \Sigma \longrightarrow \Sigma$ we denote its mapping class by $[f]$.
\end{defn}

Let $\phi = [f] \in \MCG(\Sigma)$, let $p \in \Sigma$ and let $\beta \colon [0,1] \longrightarrow \Sigma$ be a path from $\beta(0) = f(p)$ to $\beta(1) = p$. Identifying $\pi_1(\Sigma) \cong \pi_1(\Sigma,p)$ we obtain an automorphism at the level of fundamental groups $f_* \colon \pi_1(\Sigma) \longrightarrow \pi_1(\Sigma)$ given by
\[ f_*([c]) \coloneqq [ \beta \cdot (f \circ c) \cdot \beta^{-1}] \]
for every homotopy class $[c] \in \pi_1(\Sigma,p)$ of a closed loop $c$
at $p$. 

This construction depends on the choice of representative $f \in \phi$ and the choice of path $\beta$. However, we have the following Proposition.

\begin{propdefn}
    The map
    \begin{align*}
    \MCG(\Sigma) &\longrightarrow \Out(\pi_1(\Sigma)) = \Aut(\pi_1(\Sigma)) / \Inn(\pi_1(\Sigma)),\\ 
    \phi=[f] &\longmapsto \phi_* \coloneqq [f_*]
    \end{align*}
    is a well-defined injective homomorphism.
    
    We denote its image by $\Out^*(\pi_1(\Sigma)) \leq \Out(\pi_1(\Sigma))$ and its preimage under the quotient map $\Aut(\pi_(\Sigma)) \longrightarrow \Out(\pi_1(\Sigma))$ by $\Aut^*(\pi_1(\Sigma)) \leq \Aut(\pi_1(\Sigma))$. These (outer) automorphisms are called \emph{geometric} or \emph{admissible}.
\end{propdefn}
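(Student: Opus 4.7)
The statement has three claims: well-definedness of $\phi_* = [f_*]$ (independence of both the choice of representative $f \in \phi$ and of the auxiliary path $\beta$), the group homomorphism property, and injectivity. The first two are routine change-of-basepoint calculations; the real content, and the main obstacle, is the injectivity, which is the classical Dehn--Nielsen--Baer theorem.

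For well-definedness I would proceed in two steps. First, fix $f \in \phi$ and compare two paths $\beta, \beta'$ from $f(p)$ to $p$. Their concatenation yields a loop at $p$ representing some $[\gamma] \in \pi_1(\Sigma,p)$, and a direct verification shows that $f_*^\beta$ and $f_*^{\beta'}$ differ by conjugation by $[\gamma]$, hence coincide in $\Out(\pi_1(\Sigma))$. Second, for two representatives $f, f' \in \phi$, pick an ambient isotopy $H$ from $f$ to $f'$; the reversed track $t \mapsto H_{1-t}(p)$ concatenated with $\beta$ is a valid path for $f'$, and the $2$-parameter homotopy $(s,t) \mapsto H_t(c(s))$ shows that the resulting conjugating loops agree in $\pi_1(\Sigma,p)$, giving $f_* = f'_*$ literally in $\Aut(\pi_1(\Sigma))$.

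The homomorphism property is a similar short calculation: for $\phi = [f]$, $\psi = [g]$ with chosen paths $\beta_f$ and $\beta_g$, I would compute $(f \circ g)_*$ using the path $(f \circ \beta_g) \cdot \beta_f$ from $f(g(p))$ to $p$ and verify on the nose that $(f \circ g)_* = f_* \circ g_*$ in $\Aut(\pi_1(\Sigma))$, hence also in $\Out(\pi_1(\Sigma))$.

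The hard step is injectivity. Suppose $\phi_* = 1 \in \Out(\pi_1(\Sigma))$, so that $f_*$ is inner. Composing with an isotopy that drags the basepoint along a representative loop for the conjugating element, I may assume $f_* = \id$ on $\pi_1(\Sigma,p)$; the task reduces to showing that such an $f$ is isotopic to the identity. This is the Dehn--Nielsen--Baer theorem for finite-type orientable surfaces (see e.g.\ Farb--Margalit, \emph{A Primer on Mapping Class Groups}, Theorem 8.1). The standard argument fixes a pants decomposition $\sigma$ of $\Sigma$; since $f_*$ fixes the conjugacy class of each component of $\sigma$, Baer's theorem (homotopic essential simple closed curves are ambient-isotopic) lets one isotope $f$ to preserve $\sigma$ setwise, and the analogous statement on each pair of pants is handled by the Alexander trick. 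For the purposes of this paper one would invoke Dehn--Nielsen--Baer as a black box rather than reprove it; this citation is the deepest input required.
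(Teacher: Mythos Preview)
Your proposal is correct and is precisely the standard argument; the paper, however, states this result without proof, treating it as classical background. Your outline for well-definedness and the homomorphism property is the usual change-of-basepoint bookkeeping, and your identification of injectivity with (the injectivity half of) the Dehn--Nielsen--Baer theorem is exactly right. One small remark on the citation: Theorem~8.1 in Farb--Margalit is stated for \emph{closed} surfaces; since $\Sigma$ here may have punctures, the relevant reference for injectivity of $\MCG(\Sigma) \to \Out(\pi_1(\Sigma))$ in the finite-type setting is the Alexander method (\S 2.3) together with the punctured Dehn--Nielsen--Baer theorem (Theorem~8.8) in the same book.
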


\begin{propdefn}\label{propdefn:ModuliSpace}
    The group $\Aut^*(\pi_1(\Sigma))$ acts on $\Rep^{(*)}(\Sigma)$ from the right via precomposition, and induces a right-action of $\Out^*(\pi_1(\Sigma)) \cong \MCG(\Sigma)$ on $\Teich(\Sigma)$. 
    
    The quotient space
    \[ \Moduli(\Sigma) \coloneqq \Teich(\Sigma)/\MCG(\Sigma) \]
    is called the \emph{Moduli space of $\Sigma$}. We will denote the $\MCG(\Sigma)$-equivalence class of $[\rho] \in \Teich(\Sigma)$ by $[[\rho]] \in \Moduli(\Sigma)$.
    
    Moreover, we have the following commutative diagram
    \begin{center}
        \begin{tikzcd}[column sep = huge, row sep = large]
            \Rep(\Sigma) \arrow[r,"\Aut^*(\pi_1(\Sigma))"] \arrow[d,"G"]
            & \Rep(\Sigma)/{\Aut^*(\pi_1(\Sigma))} \arrow[d,"G"]\\
            \Teich(\Sigma) \arrow[r,"\MCG(\Sigma)\cong \Out^*(\pi_1(\Sigma))"] & \Moduli(\Sigma)
        \end{tikzcd}
    \end{center}
    where every map is the quotient map with respect to the action of the annotated group.
\end{propdefn}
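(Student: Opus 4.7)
The proof is essentially formal; I would structure it in three steps corresponding to the three assertions: well-definedness of the right action on $\Rep^{(*)}(\Sigma)$, descent to $\Out^*(\pi_1(\Sigma)) \cong \MCG(\Sigma)$ on $\Teich(\Sigma)$, and commutativity of the diagram.

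First, I would verify that the rule $\rho \cdot \phi \coloneqq \rho \circ \phi$ defines a right action of $\Aut^*(\pi_1(\Sigma))$ on $\Rep^*(\Sigma)$ (and on $\Rep(\Sigma)$). Since $\phi$ is an automorphism, $\rho \circ \phi$ remains discrete and faithful. For admissibility, by the preceding proposition and definition every such $\phi$ can be written as $\Inn(\gamma_0) \circ g_*$ for some homeomorphism $g \colon \Sigma \to \Sigma$ and some $\gamma_0 \in \pi_1(\Sigma)$. If $\rho$ is the holonomy of $f \colon \Sigma \to X$, then $\rho \circ g_*$ is the holonomy of $f \circ g \colon \Sigma \to X$, while $\rho \circ \Inn(\gamma_0) = \rho(\gamma_0)\, \rho(\cdot)\, \rho(\gamma_0)^{-1}$ differs from $\rho$ only by $G$-conjugation, which corresponds to changing the marking by an inner automorphism and hence is again a holonomy. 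To see that the action restricts to $\Rep(\Sigma)$, I would invoke Remark \ref{rem:PeripheralsParabolic} together with the fact that a homeomorphism of $\Sigma$ permutes the punctures and therefore permutes peripheral conjugacy classes in $\pi_1(\Sigma)$, so parabolicity of $\rho$ on peripherals is preserved.

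Second, to see that the action descends, I would observe that for any $\gamma_0 \in \pi_1(\Sigma)$ the identity
\[ \rho \circ \Inn(\gamma_0) = \rho(\gamma_0)\, \rho\, \rho(\gamma_0)^{-1} \]
displays $\rho \cdot \Inn(\gamma_0)$ as a $G$-translate of $\rho$. Hence $\Inn(\pi_1(\Sigma))$ acts trivially on $\Teich(\Sigma) = G \backslash \Rep(\Sigma)$, so the action of $\Aut^*(\pi_1(\Sigma))$ descends to a right action of $\Out^*(\pi_1(\Sigma)) \cong \MCG(\Sigma)$ on Teichmüller space.

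Third, for the commutative diagram, the key observation is that the left $G$-action and the right $\Aut^*$-action on $\Rep(\Sigma)$ commute:
\[ (g\, \rho\, g^{-1}) \circ \phi = g\, (\rho \circ \phi)\, g^{-1}. \]
Therefore the iterated quotients in either order yield the same space $\Moduli(\Sigma)$, and all four maps in the square are the corresponding quotient maps. Continuity of the horizontal arrows is the only remaining point: viewing $\Rep^{(*)}(\Sigma) \hookrightarrow G^S$ via the injection $i$, the map $\rho \mapsto \rho \circ \phi$ becomes $(\rho(s))_{s \in S} \mapsto (\rho(\phi(s)))_{s \in S}$, and since each $\phi(s) \in \pi_1(\Sigma)$ is a finite word in $S \cup S^{-1}$, each coordinate is a continuous word map on $G^S$. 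I do not anticipate a serious obstacle; every step unpacks the preceding definition of $\Aut^*(\pi_1(\Sigma))$ as the preimage of $\MCG(\Sigma) \hookrightarrow \Out(\pi_1(\Sigma))$, and the only mild subtlety is treating $\Rep(\Sigma)$ separately from $\Rep^*(\Sigma)$ using the peripheral-preservation property above.
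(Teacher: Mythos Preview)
Your proposal is correct and follows essentially the same approach as the paper: the key point in both is that if $\rho$ is the holonomy of $f\colon \Sigma \to X$ and $\alpha \in \Aut^*(\pi_1(\Sigma))$ has $[\alpha] = [g_*]$, then $\rho \circ \alpha$ is the holonomy of $f \circ g$, hence admissible. The paper's proof is in fact much terser than yours---it checks only this admissibility step and dismisses the descent, commutativity, and continuity as following easily---so your more explicit treatment of the inner-automorphism part, the peripheral-preservation argument for $\Rep(\Sigma)$, and the commutation of the $G$- and $\Aut^*$-actions is a welcome elaboration rather than a different route.
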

\begin{proof}
    Let $\rho \in \Rep^{(*)}(\Sigma)$ and $f \colon \Sigma \longrightarrow X$ an orientation preserving homeomorphism onto a hyperbolic surface $X$ such that $f_* = \rho$. Let $\alpha \in \Aut^*(\pi_1(\Sigma))$ such that $[\alpha] = [g_*] \in \Out^*(\pi_1(\Sigma))$  where $g \colon \Sigma \longrightarrow \Sigma$ is  an orientation preserving homeomorphism. Then $\rho \circ \alpha$ is the holonomy representation of $f \circ g \colon \Sigma \longrightarrow X$ which is an orientation preserving homeomorphism and $\rho \circ \alpha \in \Rep^*(\Sigma)$.
    
    The other assertions follow easily.
\end{proof}

\subsection{Augmented Teichm\"uller Space}\label{subsect:AugTeich}

Following Harvey \cite{harveyshort, harvey77} and Abikoff \cite{abikoff} we will now introduce augmented Teichm\"uller space $\augTeich(\Sigma)$; a bordification of Teichm\"uller space $\Teich(\Sigma)$. 
The idea is to allow the lengths of (homotopically) disjoint simple closed curves to go to zero as one moves to infinity in $\Teich(\Sigma)$. This will be accounted for by attaching the Teichm\"uller spaces of the subsurfaces in the complement of the pinched curves. 
Thus augmented Teichm\"uller space will admit a natural stratification in terms of the curve complex $\mc{C}(\Sigma)$.

Recall that the curve complex $\mc{C}(\Sigma)$ is a (combinatorial) simplicial complex and its vertices are given by homotopy classes of essential simple closed curves in $\Sigma$. A $(l-1)$-dimensional simplex $\sigma \subset \mc{C}(\Sigma)$ is then given by a collection $\sigma = \{ \alpha_1, \ldots, \alpha_l\}$ of homotopy classes of essential simple closed curves which are pairwise distinct and admit disjoint representatives. A maximal simplex $\hat\sigma = \{\alpha_1, \ldots, \alpha_N\}$, $N= 3g-3+p$, is a pairs of pants decomposition of $\Sigma$, such that the dimension of $\mc{C}(\Sigma)$ is $3g +p - 4$. 

In the following, we will equip $\Sigma$ with an auxiliary hyperbolic structure. Thus we may assume that every simplex $\sigma \subset \mc{C}(\Sigma)$ consists of the (unique) closed geodesic representatives with respect to that hyperbolic structure. One can check a posteriori that the following definitions are independent of this choice up to natural isomorphisms.

\begin{defn}[Augmented Teichm\"uller Space]
    Let $\sigma \subset \mc{C}(\Sigma)$ be a simplex in the curve complex. We define
    \begin{align*}
        \Teich^*_{\sigma}(\Sigma) = \prod_{\Sigma' \in c(\sigma)} \Teich^*(\Sigma'), \quad \text{ and } \quad
        \Teich_{\sigma}(\Sigma) = \prod_{\Sigma' \in c(\sigma)} \Teich(\Sigma') \subseteq \Teich_\sigma^*(\Sigma)
    \end{align*}
    where the product is taken over all connected components $c(\sigma)$ of $\Sigma \setminus \sigma$.
    
    The disjoint union over all simplices $\sigma \subset \mc{C}(\Sigma)$
    \[\augTeich(\Sigma) = \bigsqcup_{\sigma \subset \mc{C}(\Sigma)} \Teich_{\sigma}(\Sigma)\]
    is then called the \emph{augmented Teichm\"uller space of $\Sigma$}.
\end{defn}

\begin{figure}[t]
    \begin{subfigure}[b]{.9 \linewidth}
        \centering
        \def\svgwidth{0.6\linewidth}
\begingroup%
  \makeatletter%
  \providecommand\color[2][]{%
    \errmessage{(Inkscape) Color is used for the text in Inkscape, but the package 'color.sty' is not loaded}%
    \renewcommand\color[2][]{}%
  }%
  \providecommand\transparent[1]{%
    \errmessage{(Inkscape) Transparency is used (non-zero) for the text in Inkscape, but the package 'transparent.sty' is not loaded}%
    \renewcommand\transparent[1]{}%
  }%
  \providecommand\rotatebox[2]{#2}%
  \newcommand*\fsize{\dimexpr\f@size pt\relax}%
  \newcommand*\lineheight[1]{\fontsize{\fsize}{#1\fsize}\selectfont}%
  \ifx\svgwidth\undefined%
    \setlength{\unitlength}{375.73034572bp}%
    \ifx\svgscale\undefined%
      \relax%
    \else%
      \setlength{\unitlength}{\unitlength * \real{\svgscale}}%
    \fi%
  \else%
    \setlength{\unitlength}{\svgwidth}%
  \fi%
  \global\let\svgwidth\undefined%
  \global\let\svgscale\undefined%
  \makeatother%
  \begin{picture}(1,0.42121094)%
    \lineheight{1}%
    \setlength\tabcolsep{0pt}%
    \put(0,0){\includegraphics[width=\unitlength,page=1]{./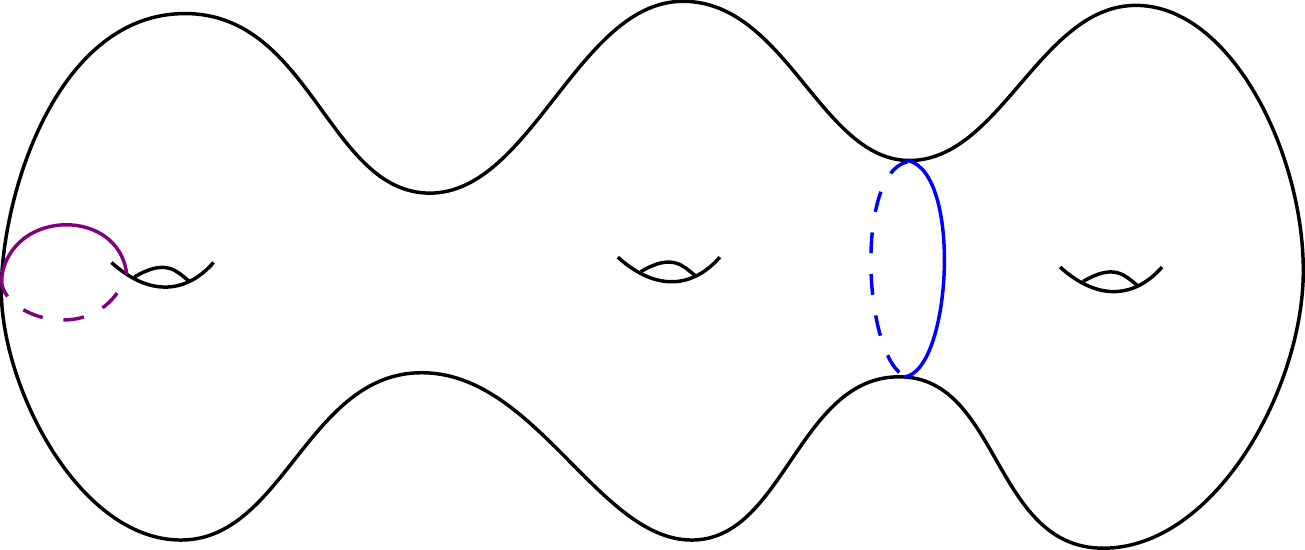}}%
    \put(0.10677082,0.34293104){\color[rgb]{0,0,0}\makebox(0,0)[lt]{\lineheight{1.25}\smash{\begin{tabular}[t]{l}$\times$\end{tabular}}}}%
    \put(0.29559807,0.20414298){\color[rgb]{0,0,0}\makebox(0,0)[lt]{\lineheight{1.25}\smash{\begin{tabular}[t]{l}$\Sigma_1$\end{tabular}}}}%
    \put(0.03218404,0.27023252){\color[rgb]{0.50196078,0,0.50196078}\makebox(0,0)[lt]{\lineheight{1.25}\smash{\begin{tabular}[t]{l}$\alpha_1$\end{tabular}}}}%
    \put(0.83989218,0.27289966){\color[rgb]{0,0,0}\makebox(0,0)[lt]{\lineheight{1.25}\smash{\begin{tabular}[t]{l}$\Sigma_2$\end{tabular}}}}%
    \put(0.72373594,0.26201464){\color[rgb]{0,0,1}\makebox(0,0)[lt]{\lineheight{1.25}\smash{\begin{tabular}[t]{l}$\alpha_2$\end{tabular}}}}%
  \end{picture}%
\endgroup%

        \caption{Let $\Sigma$ be the surface of genus $3$ with one puncture. We consider the simplex $\sigma = \{ {\color{inkscapepurple}\alpha_1}, {\color{inkscapeblue} \alpha_2} \} \subset \mc{C}(\Sigma)$, with components $c(\sigma)=\{ \Sigma_1, \Sigma_2\}$.}
        \label{subfig:ex_aug_teich:a}
    \end{subfigure}
\begin{subfigure}[b]{.9 \linewidth} 
    \centering
    \def\svgwidth{0.9\linewidth}
    \subimport{./pictures/}{Sigma_1_2_combined.pdf_tex}
    \caption{Let $\mf{r} = ([\rho_{\Sigma_1}], [\rho_{\Sigma_2}]) \in \Teich_{\sigma}(\Sigma) \subset \augTeich(\Sigma)$ be a point in augmented Teichm\"uller space, where $[\rho_{\Sigma_i}] \in \Teich(\Sigma_i)$, $i=1,2$. Because the representation $\rho_{\Sigma_i}$ is admissible there is an orientation preserving homeomorphism $f_{\Sigma_i} \colon \Sigma_1 \longrightarrow \Gamma_i \backslash \HH^2$, $\Gamma_i \coloneqq \rho_{\Sigma_i}(\pi_1(\Sigma_i))$, with holonomy $(f_{\Sigma_i})_* = \rho_{\Sigma_i}$, $i=1,2$.} 
\end{subfigure}
    \begin{subfigure}[b]{.9 \linewidth} 
        \centering
        \def\svgwidth{0.7\linewidth}
\begingroup%
  \makeatletter%
  \providecommand\color[2][]{%
    \errmessage{(Inkscape) Color is used for the text in Inkscape, but the package 'color.sty' is not loaded}%
    \renewcommand\color[2][]{}%
  }%
  \providecommand\transparent[1]{%
    \errmessage{(Inkscape) Transparency is used (non-zero) for the text in Inkscape, but the package 'transparent.sty' is not loaded}%
    \renewcommand\transparent[1]{}%
  }%
  \providecommand\rotatebox[2]{#2}%
  \newcommand*\fsize{\dimexpr\f@size pt\relax}%
  \newcommand*\lineheight[1]{\fontsize{\fsize}{#1\fsize}\selectfont}%
  \ifx\svgwidth\undefined%
    \setlength{\unitlength}{572.80020166bp}%
    \ifx\svgscale\undefined%
      \relax%
    \else%
      \setlength{\unitlength}{\unitlength * \real{\svgscale}}%
    \fi%
  \else%
    \setlength{\unitlength}{\svgwidth}%
  \fi%
  \global\let\svgwidth\undefined%
  \global\let\svgscale\undefined%
  \makeatother%
  \begin{picture}(1,0.46391745)%
    \lineheight{1}%
    \setlength\tabcolsep{0pt}%
    \put(0,0){\includegraphics[width=\unitlength,page=1]{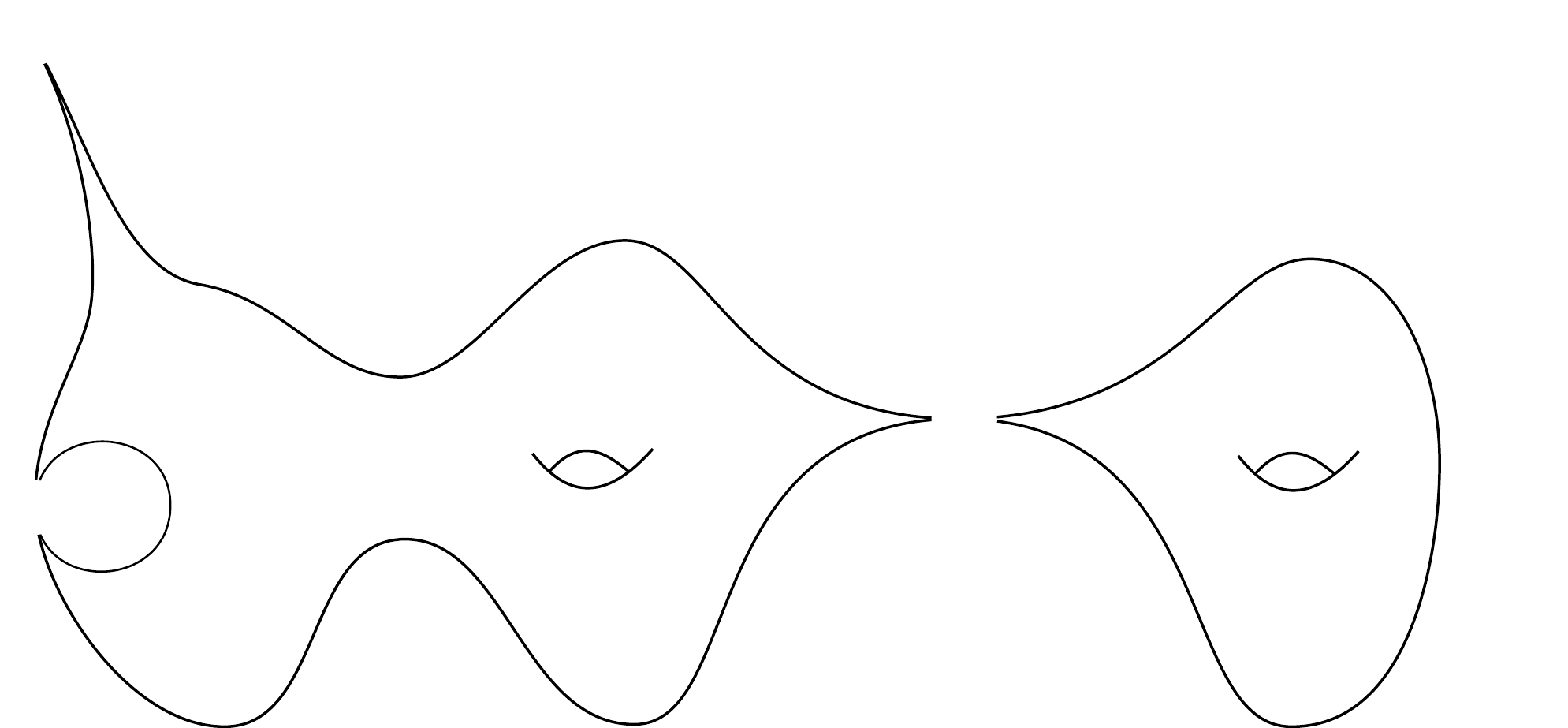}}%
    \put(0.03427067,0.13560011){\color[rgb]{0.50196078,0,0.50196078}\makebox(0,0)[lt]{\lineheight{1.25}\smash{\begin{tabular}[t]{l}$\alpha_1$\end{tabular}}}}%
    \put(0.599206,0.21168021){\color[rgb]{0,0,1}\makebox(0,0)[lt]{\lineheight{1.25}\smash{\begin{tabular}[t]{l}$\alpha_2$\end{tabular}}}}%
    \put(0.23139012,0.18165634){\color[rgb]{0,0,0}\makebox(0,0)[lt]{\lineheight{1.25}\smash{\begin{tabular}[t]{l}$[\rho_{\Sigma_1}]$\end{tabular}}}}%
    \put(0.78823174,0.21888703){\color[rgb]{0,0,0}\makebox(0,0)[lt]{\lineheight{1.25}\smash{\begin{tabular}[t]{l}$[\rho_{\Sigma_2}]$\end{tabular}}}}%
    \put(0,0){\includegraphics[width=\unitlength,page=2]{summary.pdf}}%
  \end{picture}%
\endgroup%

        \caption{The above data is summarized in this picture.} 
        \label{subfig:summary}
    \end{subfigure}
    \caption{}
\label{fig:ex_aug_teich}
\end{figure}

It is important to note that for a point $\mf{r} = ([\rho_{\Sigma'}])_{\Sigma' \in c(\Sigma)} \in \Teich_{\sigma}(\Sigma) \subset \augTeich(\Sigma)$ the simplex $\sigma \subset \mc{C}(\Sigma)$ is implicit as is the decomposition of $\Sigma$ into the components $\{ \Sigma' \}_{\Sigma' \in c(\sigma)}$. See Figure \ref{fig:ex_aug_teich} for an example.

We wish to equip augmented Teichm\"uller space with a topology. In order to do so we will need \emph{restriction maps}. Let $\sigma \subset \mc{C}(\Sigma)$ be a simplex in the curve complex and $\Sigma' \in c(\sigma)$ a connected component of $\Sigma \setminus \sigma$. Let $\pi \colon \HH^2 \cong \tilde{\Sigma} \longrightarrow \Sigma$ denote the universal covering; recall that $\Sigma$ carries an auxiliary hyperbolic structure. Denote by $\tilde{\sigma} := \pi^{-1}(\sigma) \subset \HH^2$ the disjoint union of geodesics that project to $\sigma \subset \Sigma$. Further, let $\tilde{\Sigma}' \subset \HH^2 \setminus \tilde{\sigma}$ be a connected component that projects to $\Sigma'$, i.e.\ $\pi(\tilde{\Sigma}') = \Sigma'$. Observe that $\tilde{\Sigma}' \subset \HH^2$ is a convex subset such that $\pi|_{\tilde{\Sigma}'} \colon \tilde{\Sigma}' \longrightarrow \Sigma'$ is a universal covering; see Figure \ref{fig:restriction_map}. Thus, we obtain the following commutative diagram:
\begin{center}
    \begin{tikzcd}[column sep = large, row sep = large]
        \tilde{\Sigma}' \arrow[r,hook] \arrow[d,"\pi|_{\Sigma'}"] 
        & \tilde{\Sigma} \arrow[d,"\pi"] \\
        \Sigma' \arrow[r, hook] & \Sigma
    \end{tikzcd}
\end{center}
It follows that the homomorphism $\iota_{\Sigma'} \colon \pi_1(\Sigma') \longrightarrow \pi_1(\Sigma)$ induced by inclusion is injective, and identifies $\pi_1(\Sigma')$ with the subgroup of $\pi_1(\Sigma) \cong \Deck(\pi)$ that leaves the component $\tilde{\Sigma}'$ invariant.

\begin{figure}[h!]
    \def\svgwidth{0.9\linewidth}
\begingroup%
  \makeatletter%
  \providecommand\color[2][]{%
    \errmessage{(Inkscape) Color is used for the text in Inkscape, but the package 'color.sty' is not loaded}%
    \renewcommand\color[2][]{}%
  }%
  \providecommand\transparent[1]{%
    \errmessage{(Inkscape) Transparency is used (non-zero) for the text in Inkscape, but the package 'transparent.sty' is not loaded}%
    \renewcommand\transparent[1]{}%
  }%
  \providecommand\rotatebox[2]{#2}%
  \newcommand*\fsize{\dimexpr\f@size pt\relax}%
  \newcommand*\lineheight[1]{\fontsize{\fsize}{#1\fsize}\selectfont}%
  \ifx\svgwidth\undefined%
    \setlength{\unitlength}{453.66247559bp}%
    \ifx\svgscale\undefined%
      \relax%
    \else%
      \setlength{\unitlength}{\unitlength * \real{\svgscale}}%
    \fi%
  \else%
    \setlength{\unitlength}{\svgwidth}%
  \fi%
  \global\let\svgwidth\undefined%
  \global\let\svgscale\undefined%
  \makeatother%
  \begin{picture}(1,1.31290114)%
    \lineheight{1}%
    \setlength\tabcolsep{0pt}%
    \put(0,0){\includegraphics[width=\unitlength,page=1]{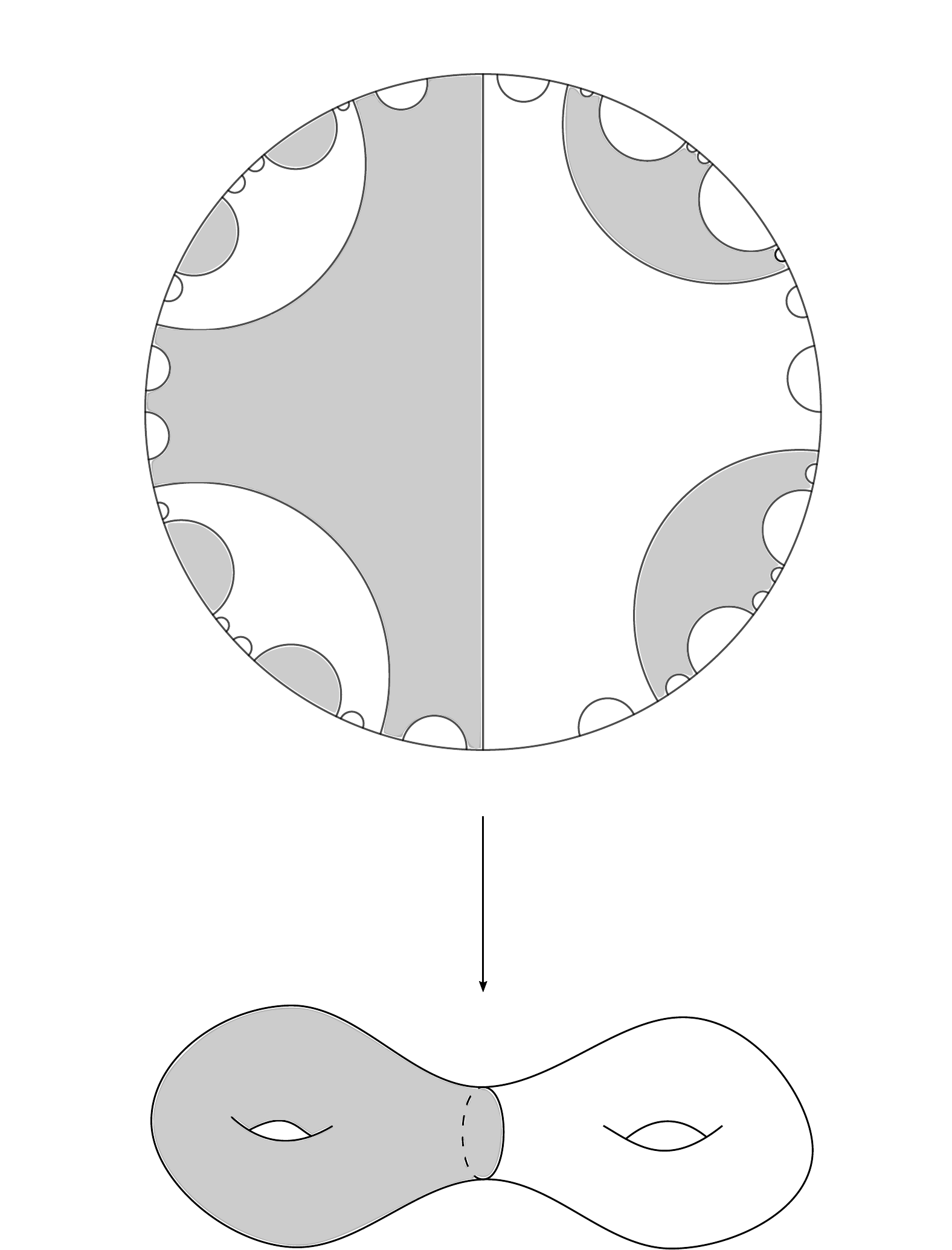}}%
    \put(0.3292818,0.92027407){\color[rgb]{0,0,0}\makebox(0,0)[lt]{\lineheight{1.25}\smash{\begin{tabular}[t]{l}$\tilde{\Sigma}'$\end{tabular}}}}%
    \put(0.30027724,0.1919368){\color[rgb]{0,0,0}\makebox(0,0)[lt]{\lineheight{1.25}\smash{\begin{tabular}[t]{l}$\Sigma'$\end{tabular}}}}%
    \put(0.51724833,0.36219538){\color[rgb]{0,0,0}\makebox(0,0)[lt]{\lineheight{1.25}\smash{\begin{tabular}[t]{l}$\pi$\end{tabular}}}}%
    \put(0.5431007,0.13204381){\color[rgb]{0,0,0}\makebox(0,0)[lt]{\lineheight{1.25}\smash{\begin{tabular}[t]{l}$\sigma= \{ \alpha\}$\end{tabular}}}}%
    \put(0.53988312,0.8447411){\color[rgb]{0,0,0}\makebox(0,0)[lt]{\lineheight{1.25}\smash{\begin{tabular}[t]{l}$\tilde{\sigma}=\pi^{-1}(\sigma)$\end{tabular}}}}%
  \end{picture}%
\endgroup%

    \caption{We are considering a genus $2$ surface $\Sigma$ and a simplex $\sigma = \{\alpha\} \subset \mc{C}(\Sigma)$ consisting of one separating curve $\alpha \subset \Sigma$. The preimage $\pi^{-1}(\Sigma')$ of the component $\Sigma' \in c(\sigma)$ is shaded. We chose one connected component $\tilde{\Sigma}'\subset \HH^2 \setminus \tilde{\sigma}$.} 
    \label{fig:restriction_map}
\end{figure}

\begin{remark} \label{rem:DependenceOnComponent}
    The monomorphism $\iota_{\Sigma'} \colon \pi_1(\Sigma') \hookrightarrow \pi_1(\Sigma)$ depends on the choice of connected component $\tilde{\Sigma}' \subset \pi^{-1}(\Sigma')$.
    Different choices amount to monomorphisms $\pi_1(\Sigma') \hookrightarrow \pi_1(\Sigma)$ which are conjugate in $\pi_1(\Sigma)$.
\end{remark}

\begin{remark}
	Although we will not need this in the following, we want to mention that every simplex $\sigma \subset \mc{C}(\Sigma)$ gives rise to a graph of groups structure on $\pi_1(\Sigma)$; see \cite{serre}. Indeed, $\pi_1(\Sigma)$ is the fundamental group of the graph of groups whose vertices are the fundamental groups $\pi_1(\Sigma')$ of the components $\Sigma' \in c(\sigma)$. Identifying the peripheral subgroups corresponding to curves in $\sigma$ then amounts to the edge homomorphisms. 
	
	Our discussion of the inclusion homomorphisms already hints at how to obtain a tree on which $\pi_1(\Sigma)$ acts: The preimage $\tilde{\sigma} = \pi^{-1}(\sigma)$ decomposes $\tilde{\Sigma}$ into convex domains. We obtain a simplicial tree whose vertices are the convex domains and where any two vertices are connected by an edge if the corresponding domains are adjacent. The fundamental group $\pi_1(\Sigma)$ acts on this tree with vertex stabilizers isomorphic to the fundamental groups of the corresponding components $\Sigma' \in c(\sigma)$ and edge stabilizers isomorphic to $\ZZ$. 
\end{remark}

\begin{propdefn}\label{propdefn:RestrictionMaps}
    In the above situation, we obtain a well-defined \emph{restriction map}
    \begin{align*}
    \res^{\Sigma}_{\Sigma'} \colon \Teich^*(\Sigma) &\longrightarrow \Teich^*(\Sigma'),\\
    [\rho]  &\longmapsto [\rho \circ \iota_{\Sigma'}].
    \end{align*}
    
    For a face $\sigma' \subseteq \sigma \subset \mc{C}(\Sigma)$ these maps induce a restriction map
    \begin{align*}
        \res^{\sigma'}_{\sigma} \colon \Teich^*_{\sigma'}(\Sigma) &\longrightarrow \Teich^*_{\sigma}(\Sigma), \\
        ([\rho_{\Sigma''}])_{\Sigma'' \in c(\sigma')} &\longmapsto \p{\res^{\Sigma''}_{\Sigma'}([\rho_{\Sigma''}])}_{\Sigma' \in c(\sigma)},
    \end{align*}
    where on the right-hand-side $\Sigma''$ is the unique connected component that contains $\Sigma'$.
\end{propdefn}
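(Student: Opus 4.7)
The plan is to verify, in order: (1) that $\rho \circ \iota_{\Sigma'}$ lies in $\Rep^*(\Sigma')$; (2) that its class in $\Teich^*(\Sigma')$ is independent of the representative $\rho \in [\rho]$; (3) that this class is also independent of the choice of connected component $\tilde{\Sigma}'$ used to define $\iota_{\Sigma'}$; and (4) that the induced restriction map on the stratified space is well-defined.

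For (1), pick an orientation preserving homeomorphism $f \colon \Sigma \to X$ with $X$ a hyperbolic surface and $f_* = \rho$. The image $X' \coloneqq f(\Sigma')$ is open in $X$, inherits a hyperbolic structure, and $f|_{\Sigma'} \colon \Sigma' \to X'$ is an orientation preserving homeomorphism onto a hyperbolic surface. To identify its holonomy, choose a developing map $\tilde{f} \colon \tilde{\Sigma} \to \HH^2$ for $f$ that is $\rho$-equivariant. The component $\tilde{\Sigma}' \subset \tilde{\Sigma}$ projects to $\Sigma'$ as a universal covering with deck group $\iota_{\Sigma'}(\pi_1(\Sigma'))$, so the restriction $\tilde{f}|_{\tilde{\Sigma}'}$ is a developing map for $X'$ whose holonomy is $\rho$ restricted to $\iota_{\Sigma'}(\pi_1(\Sigma'))$. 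Unravelling this identification gives $(f|_{\Sigma'})_* = \rho \circ \iota_{\Sigma'}$, which is therefore admissible.

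Steps (2) and (3) both reduce to the observation that composition with $\iota_{\Sigma'}$ commutes with conjugation in the appropriate sense. If $\rho_2 = g\rho_1 g^{-1}$ with $g \in G$, then $\rho_2 \circ \iota_{\Sigma'} = g(\rho_1 \circ \iota_{\Sigma'})g^{-1}$, so the $\Teich^*(\Sigma')$-classes coincide. If the lift $\tilde{\Sigma}'$ is replaced by another component of $\pi^{-1}(\Sigma')$, then by Remark \ref{rem:DependenceOnComponent} the resulting monomorphism $\iota_{\Sigma'}^{(2)}$ satisfies $\iota_{\Sigma'}^{(2)}(\alpha) = \gamma \, \iota_{\Sigma'}^{(1)}(\alpha) \, \gamma^{-1}$ for some $\gamma \in \pi_1(\Sigma)$ and all $\alpha \in \pi_1(\Sigma')$. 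Consequently $\rho \circ \iota_{\Sigma'}^{(2)} = \rho(\gamma) \, (\rho \circ \iota_{\Sigma'}^{(1)}) \, \rho(\gamma)^{-1}$, producing again the same class in $\Teich^*(\Sigma')$.

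For (4), note that whenever $\sigma' \subseteq \sigma$, every component $\Sigma' \in c(\sigma)$ sits inside a unique $\Sigma'' \in c(\sigma')$, since passing from $\sigma'$ to $\sigma$ only refines the partition into components. The component map $\res^{\Sigma''}_{\Sigma'}$ is defined exactly as in (1)--(3) with $\Sigma''$ playing the role of the ambient surface and the curves of $\sigma$ meeting $\Sigma''$ as the relevant simplex in $\mc{C}(\Sigma'')$; taking the product over $\Sigma' \in c(\sigma)$ yields $\res^{\sigma'}_\sigma$, and coordinate-wise well-definedness follows from the previous steps. The main technical obstacle is the bookkeeping in (1): one must match the abstract monomorphism $\iota_{\Sigma'}$ (built from the auxiliary hyperbolic structure and a choice of lift) with the holonomy of $f|_{\Sigma'}$ (which a priori knows only about the structure determined by $\rho$), and Remark \ref{rem:DependenceOnComponent} is precisely what guarantees that these choices are compatible up to the equivalence implicit in passing to $\Teich^*$.
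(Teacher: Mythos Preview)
Your steps (2)--(4) are fine and match the paper's reasoning. The gap is in step~(1). You take $X' \coloneqq f(\Sigma')$, observe that it inherits a hyperbolic metric, and declare $f|_{\Sigma'} \colon \Sigma' \to X'$ to be a homeomorphism onto a hyperbolic surface with holonomy $\rho\circ\iota_{\Sigma'}$. But in this paper a \emph{hyperbolic surface} is a complete one, i.e.\ a quotient $\Gamma'\backslash\HH^2$; the open piece $X'$ is not complete (its completion adds the geodesics in $f(\sigma)$). What admissibility of $\rho\circ\iota_{\Sigma'}$ actually requires is an orientation-preserving homeomorphism $\Sigma' \to \Gamma'\backslash\HH^2$ with $\Gamma' = \im(\rho\circ\iota_{\Sigma'})$, and you have only produced $\Sigma' \cong X' \cong \Gamma'\backslash\tilde{X}'$, which is a proper subset of $\Gamma'\backslash\HH^2$.

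The paper fills exactly this gap: after obtaining $f' \colon \Sigma' \to \Gamma'\backslash\tilde{X}'$ it analyzes the $\Gamma'$-action on the complement $\HH^2 \setminus \tilde{X}'$, which is a disjoint union of half-planes bounded by lifts of the curves in $f(\sigma)$. Each such half-plane is stabilized in $\Gamma'$ by a cyclic hyperbolic subgroup, so its image in $\Gamma'\backslash\HH^2$ is a funnel. Hence $\Gamma'\backslash\HH^2$ deformation retracts onto $\Gamma'\backslash\tilde{X}'$, and $f'$ can be extended to the required homeomorphism $\Sigma' \to \Gamma'\backslash\HH^2$. You should insert this argument (or an equivalent appeal to the convex-core-plus-funnels decomposition of a finitely generated Fuchsian quotient) to complete your step~(1).
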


\begin{proof}

Let $[\rho] \in \Teich^*(\Sigma)$ with $\rho \in \Rep^*(\Sigma)$. Consider the composition $\rho \circ \iota_{\Sigma'} \colon \pi_1(\Sigma') \hookrightarrow G$. Then the image $\Gamma' \coloneqq \rho(\iota_{\Sigma'}(\pi_1(\Sigma'))) \leq \Gamma \coloneqq \rho(\pi_1(\Sigma))$ is discrete, such that $\rho \circ \iota_{\Sigma'}$ is a discrete and faithful representation. Our goal is to show that $\rho \circ \iota_{\Sigma'} \in \Rep^*(\Sigma')$. It will then be immediate that
\begin{align*}
    \res^{\Sigma}_{\Sigma'} \colon \Teich^*(\Sigma) &\longrightarrow \Teich^*(\Sigma'),\\
    [\rho]  &\longmapsto [\rho \circ \iota_{\Sigma'}],
\end{align*}
is a well-defined map.
Indeed, by Remark \ref{rem:DependenceOnComponent} the injective homomorphism $\iota_{\Sigma'}$ is well-defined only up to conjugation in $\pi_1(\Sigma)$. However, this issue is resolved after taking the quotient with respect to the conjugation action of $G$ on $\Rep^*(\Sigma)$.

We are left to show that $\rho \circ \iota_{\Sigma'}$ is the holonomy of an orientation preserving homeomorphism $f' \colon \Sigma' \longrightarrow \Gamma' \backslash \HH^2$. 
Let $f \colon \Sigma \longrightarrow X \coloneqq \Gamma \backslash \HH^2$ be an orientation preserving homeomorphism with holonomy $\rho$. We may isotope $f$ in such a way that it sends the curves in $\sigma$ to geodesics $f(\sigma) \subseteq X$. Therefore, it sends $\Sigma'$ to a connected component $X'$ of $X \setminus f(\sigma)$. Consider a lift $\tilde{f} \colon \tilde{\Sigma} \longrightarrow \HH^2$ of $f \colon \Sigma \longrightarrow X$ with respect to the universal coverings $\pi \colon \tilde{\Sigma} \longrightarrow \Sigma$ and $\pi_\Gamma \colon \HH^2 \longrightarrow \Gamma \backslash \HH^2$. Then $\tilde{X}' \coloneqq \tilde{f}(\tilde{\Sigma}') \subseteq \HH^2 \setminus \pi_{\Gamma}^{-1}(f(\sigma))$ is a connected component, and $\tilde{f}|_{\tilde{\Sigma}'} \colon \tilde{\Sigma}' \longrightarrow \tilde{X}'$ is $(\rho \circ \iota_{\Sigma'})$-equivariant. Thus, it descends to an orientation preserving homeomorphism
$f' \colon \Sigma' \longrightarrow \Gamma' \backslash \tilde{X}' \cong X'$, $\Gamma' \coloneqq (\rho \circ \iota_{\Sigma'})(\pi_1(\Sigma'))$. 
The complement $\HH^2 \setminus \tilde{X}'$ is a disjoint union of closed half-spaces $\{H_j\}_{j \in \NN}$ each bordering on a geodesic of $\tilde{f}(\tilde{\sigma})$ adjacent to $\tilde{X}'$. 

We want to understand the action of $\Gamma'$ on each half-space $H_j$,$j \in \NN$. Note that the disjoint union $\bigsqcup_{j \in \NN} H_j$ is $\Gamma'$-invariant such that $\Gamma'$ acts via permutations on $\{ H_j \}_{j \in \NN}$. Thus, if $\gamma \in \Gamma'$ is an element such that $\gamma H_j \cap H_j \neq \emptyset$ then $\gamma H_j = H_j$. If $I_j = \partial H_j \subseteq \partial \HH^2 \cong \SS^1$ denotes the interval that $H_j$ borders on then $\gamma$ has to fix $I_j$. Because $\Gamma'$ is torsion-free it does not contain any elliptic elements such that $\gamma$ must be a hyperbolic element that fixes the end points of $I_j$. By discreteness of $\Gamma'$ there is a hyperbolic element $\gamma_j \in \Gamma'$ for every $j \in \NN$ such that any element $\gamma \in \Gamma'$ satisfying $\gamma H_j \cap H_j \neq \emptyset$ is a power of $\gamma_j$. It follows that the quotient of $H_j$ under the quotient map $\pi' \colon \HH^2 \longrightarrow \Gamma' \backslash \HH^2$ is a hyperbolic funnel $F_j \coloneqq \pi'(H_j) \cong \langle \gamma_j \rangle \backslash H_j$.

Therefore, the complement of $\Gamma' \backslash \tilde{X}'$ in $Y' = \Gamma' \backslash \HH^2$ is a disjoint union of hyperbolic funnels. In particular, $Y'$ deformation retracts to $\Gamma' \backslash \tilde{X}'$, and we can easily modify $f'$ to an orientation preserving homeomorphism $f' \colon \Sigma' \longrightarrow Y'$ whose holonomy is $\rho \circ\iota_{\Sigma'}$. We conclude that $\rho \circ \iota_{\Sigma'} \in \Rep^*(\Sigma')$, and $\res^{\Sigma}_{\Sigma'}$ is well-defined.

\end{proof}

We may now define a topology on $\augTeich(\Sigma)$.

\begin{defn}[Topology on $\augTeich(\Sigma)$]
    For every $\mf{r} = ([\rho_{\Sigma'}])_{\Sigma' \in c(\sigma)} \in \Teich_\sigma(\Sigma)$ we define a system of open neighborhoods by
    \[ (\res^{\sigma'}_\sigma)^{-1}(U) \cap \Teich_{\sigma'}(\Sigma) \]
    where $\sigma' \subseteq \sigma$ and $U \subseteq \Teich^*_\sigma(\Sigma) =  \prod_{\Sigma' \in c(\sigma)} \Teich^*(\Sigma')$ runs over all open neighborhoods of $\mf{r} \in \Teich^*_\sigma(\Sigma)$ in the product topology. This system of neighborhoods defines a topology on $\augTeich(\Sigma)$.
    
    A sequence $(\mf{r}^{(n)}) \subset \augTeich(\Sigma)$ converges to 
    $\mf{r} \in \Teich_{\sigma}(\Sigma)$ if and only if $\mf{r}^{(n)} \in \Teich_{\sigma_n}(\Sigma)$ with $\sigma_n \subseteq \sigma$ for large $n$, and
    $$ \res^{\sigma_n}_{\sigma}(\mf{r}^{(n)}) \to \mf{r} \qquad (n \to \infty) $$
    in $\Teich^*_{\sigma}(\Sigma)$.
\end{defn}

\begin{remark}
    In \cite{abikoff} the topology on augmented Teichm\"uller space is defined utilizing generalized Fenchel--Nielsen coordinates. However, both definitions yield the same topology.
    
    We want to mention that Loftin and Zhang \cite{loftinzhang} define a topology on the (larger) augmented deformation space of convex real projective structures quite similarly.
\end{remark}

\begin{prop}[\cite{abikoff}]
    The augmented Teichm\"uller space $\augTeich(\Sigma)$ is a first-countable Hausdorff space, and $\Teich(\Sigma) = \Teich_{\emptyset}(\Sigma) \subset \augTeich(\Sigma)$ is an open and dense subset.
\end{prop}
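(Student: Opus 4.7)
For a point $\mf{r} \in \Teich_\sigma(\Sigma)$ and an open $U \subseteq \Teich^*_\sigma(\Sigma)$ containing $\mf{r}$, write
\[V_U := \bigcup_{\tau \subseteq \sigma} (\res^\tau_\sigma)^{-1}(U) \cap \Teich_\tau(\Sigma);\]
this is the basic open neighborhood of $\mf{r}$ in $\augTeich(\Sigma)$ specified by the topology definition. First-countability follows because each $\Teich^*(\Sigma')$ is first-countable: $\Rep^*(\Sigma')$ is second-countable as a subspace of the manifold $G^S$, and the quotient map by the continuous $G$-action is open, so the quotient is first-countable. Hence the finite product $\Teich^*_\sigma(\Sigma)$ is first-countable, and a countable neighborhood basis $\{U_n\}$ of $\mf{r}$ there yields the countable basis $\{V_{U_n}\}$ of $\mf{r}$ in $\augTeich(\Sigma)$. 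Openness of $\Teich(\Sigma) = \Teich_\emptyset(\Sigma)$ is immediate: $\emptyset$ has no proper subfaces, so $V_U \subseteq \Teich_\emptyset(\Sigma)$ for every basic neighborhood of a point in $\Teich_\emptyset(\Sigma)$.

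For Hausdorffness take distinct $\mf{r} \in \Teich_\sigma(\Sigma)$ and $\mf{r}' \in \Teich_{\sigma'}(\Sigma)$. The case $\sigma = \sigma'$ reduces to Hausdorffness of $\Teich^*_\sigma(\Sigma)$: disjoint opens $U \ni \mf{r}$, $U' \ni \mf{r}'$ give $V_U \cap V_{U'} = \bigcup_{\tau \subseteq \sigma} (\res^\tau_\sigma)^{-1}(U \cap U') \cap \Teich_\tau(\Sigma) = \emptyset$. Otherwise, after swapping roles if necessary, pick $\alpha \in \sigma \setminus \sigma'$ and suppose for contradiction that every pair of basic neighborhoods meets. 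First-countability produces a sequence $\mf{r}^{(n)} \in \Teich_{\tau_n}(\Sigma)$ with $\tau_n \subseteq \sigma \cap \sigma'$, $\res^{\tau_n}_\sigma(\mf{r}^{(n)}) \to \mf{r}$ in $\Teich^*_\sigma(\Sigma)$, and $\res^{\tau_n}_{\sigma'}(\mf{r}^{(n)}) \to \mf{r}'$ in $\Teich^*_{\sigma'}(\Sigma)$; passing to a subsequence I assume $\tau_n = \tau$ is constant. Since curves in $\sigma$ are pairwise disjoint, $\alpha$ is disjoint from $\tau \subseteq \sigma$ and lies in a single component $\Sigma^* \in c(\tau)$. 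Continuity of length and the parabolicity of $\alpha$ at $\mf{r}$ then force $\ell(\rho^{(n)}_{\Sigma^*}(\alpha)) \to 0$.

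Now I split by whether $\alpha$ meets $\sigma'$. If $\alpha$ is disjoint from $\sigma'$, then $\alpha$ lies in some $\Sigma'' \in c(\sigma')$ and is essential there: otherwise $\alpha$ would cobound an annulus with a boundary curve $\beta \in \sigma'$ (inside $\Sigma$), forcing $\alpha = \beta \in \sigma'$ and contradicting $\alpha \in \sigma \setminus \sigma'$. Hence $\ell(\rho_{\Sigma''}(\alpha)) > 0$, and convergence $\res^\tau_{\sigma'}(\mf{r}^{(n)}) \to \mf{r}'$ forces $\ell(\rho^{(n)}_{\Sigma^*}(\alpha)) \to \ell(\rho_{\Sigma''}(\alpha)) > 0$, contradicting the zero limit. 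If instead $\alpha$ meets some $\beta \in \sigma'$ with $i(\alpha, \beta) > 0$, disjointness of curves in $\sigma$ forces $\beta \notin \sigma \supseteq \tau$, and disjointness of curves in $\sigma'$ forces $\beta$ disjoint from $\tau \subseteq \sigma'$, so $\beta$ meets $\alpha$ inside $\Sigma^*$. The Collar Lemma (Lemma \ref{lem:CollarLemma}) applied in $\Sigma^*$ gives $\ell(\rho^{(n)}_{\Sigma^*}(\beta)) \to \infty$; yet $\beta \in \sigma'$ is pinched at $\mf{r}'$, so convergence $\res^\tau_{\sigma'}(\mf{r}^{(n)}) \to \mf{r}'$ forces $\ell(\rho^{(n)}_{\Sigma^*}(\beta)) \to 0$—another contradiction. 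This completes the proof of Hausdorffness.

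For density, given $\mf{r} = ([\rho_{\Sigma'}])_{\Sigma' \in c(\sigma)} \in \Teich_\sigma(\Sigma)$, extend $\sigma$ to a pants decomposition $\hat{\sigma}$ of $\Sigma$. The structures $\rho_{\Sigma'}$ pin down Fenchel--Nielsen length parameters along every curve of $\hat{\sigma} \setminus \sigma$; choosing any twist parameters and assigning lengths $\varepsilon_n \downarrow 0$ with arbitrary twists to the curves in $\sigma$ produces a sequence $\mf{r}^{(n)} \in \Teich(\Sigma)$ with $\res^\emptyset_\sigma(\mf{r}^{(n)}) \to \mf{r}$ in $\Teich^*_\sigma(\Sigma)$ by the classical pinching analysis of \cite{abikoff}, so $\mf{r}^{(n)} \to \mf{r}$ in $\augTeich(\Sigma)$. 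The principal obstacle throughout is the $\sigma \neq \sigma'$ case of Hausdorffness, where separation crucially depends on invoking the Collar Lemma to transmit the vanishing of $\ell(\alpha)$ into the vanishing of $\ell(\beta)$ when $\alpha$ crosses $\sigma'$.
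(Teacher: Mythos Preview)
The paper does not supply a proof of this proposition; it merely attributes the result to \cite{abikoff}. Your argument therefore stands on its own, and it is essentially correct: first-countability follows from that of the finite product $\Teich^*_\sigma(\Sigma)$, Hausdorffness is handled by a translation-length argument (with the Collar Lemma doing the real work when the two simplices cross), and density is the standard Fenchel--Nielsen pinching, for which you in turn defer to \cite{abikoff}.

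Two minor corrections. In the density paragraph, the hyperbolic structures $\rho_{\Sigma'}$ determine \emph{both} the length and the twist parameters along every curve of $\hat\sigma \setminus \sigma$, since those curves are interior to the pieces $\Sigma'$; only along the curves of $\sigma$ are the twists free and the lengths sent to zero. Your phrase ``choosing any twist parameters'' is therefore misleading and should be tightened. In the Hausdorff argument, case~(a), when you claim that $\alpha$ is essential in $\Sigma''$ you treat only the possibility that $\alpha$ is homotopic to a curve of $\sigma'$; you should also rule out that $\alpha$ is homotopic in $\Sigma''$ to a loop around an original puncture of $\Sigma$. This is immediate---$\alpha \in \mc{C}(\Sigma)$ is by definition essential in $\Sigma$---but it should be said.
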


\subsection{Augmented Moduli Space} \label{subsect:aug_moduli}

The mapping class group action on Teichm\"uller space extends to $\augTeich(\Sigma)$ in the following way. Let $\varphi = [f] \in \MCG(\Sigma)$ and $\mf{r} = ([\rho_{\Sigma'}])_{\Sigma' \in c(\sigma)} \in \Teich_\sigma(\Sigma) \subset \augTeich(\Sigma)$. The mapping class group acts simplicially on the curve complex $\mc{C}(\Sigma)$ such that  $\varphi^{-1}(\sigma) \subset \mc{C}(\Sigma)$ is another simplex in the curve complex. Up to an isotopy we may assume that $f^{-1}$ sends $\sigma$ to a geodesic representative of $f^{-1}(\sigma)$. Hence, $f^{-1}$ induces a bijection between the components $c(\sigma)$ and $c(f^{-1}(\sigma)) = f^{-1}(c(\sigma))$, and acts from the right via restriction:
$$ \mf{r} \cdot \varphi 
\coloneqq ([\rho_{f(\Sigma')} \circ (f|_{\Sigma'})_*])_{\Sigma' \in c(f^{-1}(\sigma))}. $$
The cutting homomorphism ensures that the action is well-defined; see \cite[section 3.6.3]{farbmarg}. By definition this action extends the mapping class group action on $\Teich(\Sigma)$ such that the embedding 
$\Teich(\Sigma) \hookrightarrow \augTeich(\Sigma) $
is $\MCG(\Sigma)$-equivariant.

Note that this action permutes the different strata $\{ \Teich_{\sigma}(\Sigma)\}_{\sigma \subset \mc{C}(\Sigma)}$ of augmented Teichm\"uller space:
\[\Teich_{\sigma}(\Sigma) \stackrel{\phi}{\longrightarrow} \Teich_{\phi^{-1}(\sigma)}(\Sigma), \qquad \phi \in \MCG(\Sigma).\]

\begin{defn}
    The quotient space $\augModuli(\Sigma) \coloneqq \augTeich(\Sigma) / \MCG(\Sigma)$ of augmented Teichm\"uller space $\augTeich(\Sigma)$ by the mapping class group action is called \emph{augmented moduli space}.
\end{defn}

\begin{remark}
	Changing perspective one can see the moduli space $\Moduli(\Sigma)$ as the moduli space of smooth genus $g$ curves with $p$ marked points. In this setting, the augmented moduli space $\augModuli(\Sigma)$ corresponds to the Deligne--Mumford compactification of stable curves. We will not use this point of view in what follows and refer the reader to \cite{harveyshort, harvey77} and \cite{hubbardkoch} for details.
\end{remark}

The significance of this whole construction is that the augmented moduli space is \emph{compact}.

\begin{thm}[\cite{abikoff}]
    The augmented moduli space $\augModuli(\Sigma)$ is a compact Hausdorff space. The embedding $\Teich(\Sigma) \hookrightarrow \augTeich(\Sigma)$ descends to an embedding $\Moduli(\Sigma) \hookrightarrow \augModuli(\Sigma)$ with open and dense image.
\end{thm}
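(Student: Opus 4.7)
The proof breaks into three assertions: the quotient $\augModuli(\Sigma)$ is Hausdorff and compact, and the map $\Moduli(\Sigma) \to \augModuli(\Sigma)$ is an embedding with open and dense image. I will only sketch the plan, since all three are classical and can be ultimately referred to Abikoff.

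The \emph{embedding part is the easy piece}. The stratum $\Teich(\Sigma) = \Teich_\emptyset(\Sigma)$ is itself $\MCG(\Sigma)$-invariant because the mapping class group preserves the stratification $\augTeich(\Sigma) = \bigsqcup_\sigma \Teich_\sigma(\Sigma)$ via the rule $\Teich_\sigma(\Sigma) \to \Teich_{\phi^{-1}(\sigma)}(\Sigma)$. Hence the inclusion $\Teich(\Sigma) \hookrightarrow \augTeich(\Sigma)$ descends to an injection $\Moduli(\Sigma) \hookrightarrow \augModuli(\Sigma)$. Since quotient maps by group actions are open, openness and density of $\Moduli(\Sigma)$ in $\augModuli(\Sigma)$ follow from the corresponding statements for $\Teich(\Sigma) \subset \augTeich(\Sigma)$ proved in the previous proposition. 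That this induced map is a homeomorphism onto its image then follows from the fact that $\MCG(\Sigma)$ acts properly discontinuously on the open subset $\Teich(\Sigma)$.

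For \emph{compactness} the core tool is Bers' theorem: there is a constant $L = L(\Sigma) > 0$ such that every finite-area hyperbolic structure on any subsurface $\Sigma' \subseteq \Sigma$ admits a pants decomposition whose curves all have hyperbolic length at most $L$. Given a sequence $([\mf{r}_n]) \subset \augModuli(\Sigma)$, lift to $\mf{r}_n \in \Teich_{\sigma_n}(\Sigma) \subset \augTeich(\Sigma)$ and, using Bers, extend $\sigma_n$ to a pants decomposition $\hat{\sigma}_n$ of $\Sigma$ by $L$-bounded curves. Since the mapping class group acts transitively on the set of topological types of pants decompositions (of which there are only finitely many up to $\MCG$-orbits), we may apply an element $\phi_n \in \MCG(\Sigma)$ to arrange $\phi_n(\hat\sigma_n) = \hat\sigma$ for a fixed pants decomposition $\hat\sigma$. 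In the generalized Fenchel--Nielsen coordinates on $\Teich^*_{\hat\sigma}(\Sigma)$, each length coordinate lies in $[0,L]$ (with value $0$ exactly on $\phi_n(\sigma_n) \subseteq \hat\sigma$), and the twist coordinates can be reduced modulo the Dehn twists about the curves of $\hat\sigma$ to a bounded subset of $\RR$. After extracting a subsequence, the lengths converge in $[0,L]$ and the twists converge in $\RR$, producing a limit point $\mf{r} \in \Teich_\sigma(\Sigma)$ where $\sigma \subseteq \hat\sigma$ consists of the curves whose length limits to $0$. Combined with the definition of the topology via restriction maps, this yields convergence in $\augTeich(\Sigma)$, hence in $\augModuli(\Sigma)$.

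For the \emph{Hausdorff property} one must verify that the $\MCG(\Sigma)$-orbit equivalence relation on $\augTeich(\Sigma)$ is closed. The argument uses two inputs: first, $\MCG(\Sigma)$ acts properly discontinuously on each $\Teich_\sigma(\Sigma)$ through the quotient by the Dehn twist subgroup about $\sigma$ (which acts trivially on $\Teich_\sigma(\Sigma)$); second, the neighborhood basis $(\res^{\sigma'}_\sigma)^{-1}(U) \cap \Teich_{\sigma'}(\Sigma)$ is compatible with the $\MCG(\Sigma)$-action. Separating two points in different $\MCG(\Sigma)$-orbits then reduces to separating the corresponding points in the product Teichmüller spaces of the cut surfaces, where standard proper discontinuity applies. \emph{This Hausdorffness step is the main obstacle}: the asymmetry of the topology (sequences in $\Teich(\Sigma)$ can approach $\Teich_\sigma(\Sigma)$ only by pinching) combined with the presence of infinite-order Dehn twists about curves in $\sigma$ (which send sequences with large twist parameter to bounded ones) makes it nontrivial to check that orbits do not accumulate on each other; the delicate bookkeeping must use restriction maps together with the fact that Dehn twists about $\sigma$ act trivially on $\Teich_\sigma(\Sigma)$ so that modding out by them restores proper discontinuity on each stratum.
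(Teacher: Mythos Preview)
The paper does not contain a proof of this theorem at all: it is stated with the attribution \cite{abikoff} and no argument is given. There is therefore nothing in the paper to compare your proposal against.

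That said, your sketch follows the standard classical route (Bers' constant for sequential compactness via bounded Fenchel--Nielsen coordinates, Dehn-twist reduction of the twist parameters, and proper discontinuity modulo the twist subgroup for Hausdorffness), which is indeed the content of Abikoff's paper. As a sketch it is reasonable; the only caveat is that you correctly flag the Hausdorff step as the delicate one but do not actually carry it out, so as written this remains an outline rather than a proof.
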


\subsubsection*{Assembly Maps}

We will now give an interpretation of how elements in $\augModuli(\Sigma)$ may be assembled from elements in the moduli spaces of the components. In order to explain this, we will need some more notation.

\begin{defn}[Pure Mapping Class Group]
    The subgroup $\PMCG(\Sigma) \leq \MCG(\Sigma)$ of all mapping classes that fix each puncture of $\Sigma$ individually is called the \emph{pure mapping class group}.
\end{defn}

Observe that there is the following short exact sequence
\[ 1 \longrightarrow \PMCG(\Sigma) \longrightarrow \MCG(\Sigma) \longrightarrow \Sym(p) \longrightarrow 1 \]
coming from the action of the mapping class group on the $p$ punctures of $\Sigma$. Here $\Sym(p)$ denotes the symmetric group on $p$ elements. In particular, the pure mapping class group is a normal subgroup of the mapping class group of index $p ! = \# \Sym(p)$.

We may now form a slightly bigger moduli space of hyperbolic structures on $\Sigma$ by keeping track of the punctures individually.

\begin{defn}
    We define the quotient
    \[ \Moduli^*(\Sigma) := \Teich(\Sigma)/\PMCG(\Sigma) \]
    and denote the quotient map by
    \[ \pi_\Sigma \colon \Teich(\Sigma) \longrightarrow \Moduli^*(\Sigma).\]
\end{defn}

Let us now fix a simplex $\sigma \subseteq \mc{C}(\Sigma)$ in the curve complex. We denote by
\[ \PMCG_{\sigma}(\Sigma) \coloneqq \{ \phi \in \PMCG(\Sigma) \colon \phi(\alpha) = \alpha, \text{ for all } \alpha \in \sigma \} \leq \PMCG(\Sigma)\]
the subgroup of mapping classes fixing the homotopy class of each curve of $\sigma$ individually. By definition $\PMCG_\sigma(\Sigma)$ acts on $\Teich_\sigma(\Sigma)$ and we define
\[p_\sigma \colon \Teich_\sigma(\Sigma) \longrightarrow \Moduli^*_{\sigma}(\Sigma) \coloneqq \Teich_\sigma(\Sigma)/\PMCG_\sigma(\Sigma).\]

Observe that if $f \colon \Sigma \longrightarrow \Sigma$ represents a mapping class $[f] \in \PMCG_\sigma(\Sigma)$ then we may isotope $f$ so that it fixes each curve of $\sigma$ individually. Because $f$ is orientation preserving it follows that $f$
also fixes each component $\Sigma' \in c(\sigma)$ individually,
\[ f|_{\Sigma'} \colon \Sigma' \longrightarrow \Sigma'.\]

\begin{lemma}\label{lem:CuttingHomom}
    The map
    \begin{align*}
    \phi_\sigma \colon \PMCG_{\sigma}(\Sigma) &\longrightarrow \prod_{\Sigma' \in c(\sigma)} \PMCG(\Sigma'), \\
    [f] &\longmapsto ([f|_{\Sigma'})_{\Sigma' \in c(\sigma)} 
    \end{align*}
    is a well-defined homomorphism. Moreover, $\phi_\sigma$ fits in the following short exact sequence
    \[ 1 \longrightarrow \ker \phi_\sigma \longrightarrow \PMCG_{\sigma}(\Sigma) \stackrel{\phi_{\sigma}}{\longrightarrow} \prod_{\Sigma' \in c(\sigma)} \PMCG(\Sigma') \longrightarrow 1 \]
    and $\ker \phi_\sigma$ is generated by Dehn twists about the curves in $\sigma$.
\end{lemma}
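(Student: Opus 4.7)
The plan is to break the argument into three parts following the standard template for a cutting homomorphism in mapping class group theory.

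\textbf{Well-definedness and the homomorphism property.} Given $[f] \in \PMCG_\sigma(\Sigma)$, I would first apply an isotopy (e.g.\ via the Alexander method / Epstein's theorem on isotopies of surfaces) to replace $f$ by a homeomorphism that fixes the multicurve $\sigma$ setwise and, since $f$ fixes each component $\alpha \in \sigma$ up to isotopy and is orientation preserving, also fixes each component $\Sigma' \in c(\sigma)$ as a set. Then $f|_{\Sigma'}$ is an orientation preserving self-homeomorphism of $\Sigma'$. One then checks that $[f|_{\Sigma'}] \in \PMCG(\Sigma')$: original punctures of $\Sigma'$ are fixed because $[f] \in \PMCG(\Sigma)$, and the new punctures coming from sides of curves in $\sigma$ are fixed because $f$ preserves each component $\Sigma'$ together with each curve $\alpha$ of $\sigma$. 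Independence of the chosen representative follows from Epstein's theorem applied to pairs of isotopic homeomorphisms that both fix $\sigma$ setwise: any isotopy between them can be arranged to fix $\sigma$ setwise throughout, and restricting it to $\Sigma'$ gives an isotopy between the restrictions. The map $\phi_\sigma$ is then a homomorphism directly from $(f \circ g)|_{\Sigma'} = f|_{\Sigma'} \circ g|_{\Sigma'}$.

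\textbf{Surjectivity.} Given a collection $([g_{\Sigma'}])_{\Sigma' \in c(\sigma)}$ in $\prod_{\Sigma' \in c(\sigma)} \PMCG(\Sigma')$, I would exploit the fact that every element of $\PMCG(\Sigma')$ admits a representative which is the identity on a collar neighborhood of each puncture (since the puncture-preserving mapping class group is the quotient of the compactly supported homeomorphism group fixing the punctures, modulo isotopy, so we may take representatives compactly supported away from the punctures). Fix such representatives $g_{\Sigma'}$ simultaneously. Because each $g_{\Sigma'}$ is the identity near the cusps corresponding to $\sigma$, the maps glue across the curves of $\sigma$ to a well-defined homeomorphism $f \colon \Sigma \longrightarrow \Sigma$ which fixes each $\alpha \in \sigma$ pointwise. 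Then $[f] \in \PMCG_\sigma(\Sigma)$ and $\phi_\sigma([f]) = ([g_{\Sigma'}])_{\Sigma'}$ by construction.

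\textbf{The kernel.} Suppose $[f] \in \ker \phi_\sigma$, so each $f|_{\Sigma'}$ is isotopic to the identity in $\PMCG(\Sigma')$. I would first choose representatives of these isotopies with compact support away from the punctures of each $\Sigma'$; gluing them into an isotopy of $\Sigma$ (supported away from neighborhoods of $\sigma$) lets us replace $f$ by an isotopic homeomorphism that equals the identity on the complement of a small annular neighborhood $A_\alpha$ of each $\alpha \in \sigma$. The restriction of $f$ to each annulus $A_\alpha$ is then a homeomorphism equal to the identity on $\partial A_\alpha$, which by the classification of homeomorphisms of the annulus rel boundary is isotopic rel boundary to $T_\alpha^{n_\alpha}$ for a unique $n_\alpha \in \ZZ$. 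Hence $[f] = \prod_{\alpha \in \sigma} T_\alpha^{n_\alpha}$, showing $\ker \phi_\sigma$ is generated by the Dehn twists about the curves of $\sigma$.

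\textbf{Main obstacle.} The technical heart of the argument is controlling isotopies near $\sigma$: both the well-definedness step (one needs a relative isotopy lemma keeping $\sigma$ setwise fixed) and the kernel step (pushing the individual isotopies on the components to a global isotopy of $\Sigma$ that is well-behaved on annular neighborhoods of $\sigma$) require the full strength of Epstein-type isotopy-extension results for surfaces. These are classical but delicate; everything else reduces to straightforward bookkeeping.
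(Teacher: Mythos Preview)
Your proposal is correct and follows essentially the same approach as the paper. The only difference is one of presentation: the paper cites \cite[Proposition 3.20]{farbmarg} for well-definedness and the computation of the kernel (exactly the Epstein-type isotopy control you identify as the main obstacle), and proves only surjectivity in detail, using the same gluing idea as yours but phrased as ``interpolate on tubular neighborhoods $N_c$'' rather than ``choose representatives that are the identity near the punctures''; the two versions are equivalent once one observes that the mapping class group of a punctured disc is trivial.
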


\begin{proof}
    The proof that $\phi_\sigma$ is well-defined with kernel generated by Dehn twists about the curves in $\sigma$ is the same as for \cite[Proposition 3.20]{farbmarg}.
    
    We are left with proving that $\phi_\sigma$ surjects onto $\prod_{\Sigma' \in c(\sigma)} \PMCG(\Sigma')$. Let $[f_{\Sigma'}] \in \PMCG(\Sigma')$ for every $\Sigma' \in c(\sigma)$. Let $\{ N_c \colon c \in \sigma \}$ be a collection of disjoint closed tubular neighborhoods about the curves in $\sigma$. 
    
    We define an orientation preserving homeomorphism $f \colon \Sigma \longrightarrow \Sigma$ in the following way. For every $x \in \Sigma' \setminus \bigsqcup_{c \in \sigma} N_c$, $\Sigma' \in c(\sigma)$, we set
    \[ f(x) = f_{\Sigma'}(x).\]
    If $N_c$ is a tubular neighborhood intersecting the components $\Sigma', \Sigma'' \in c(\sigma)$ we may interpolate continuously on $N_c$ such that 
    \begin{align*}
    f(x') = f_{\Sigma'}(x') &\qquad \forall x' \in \partial N_c \cap \Sigma', \\
    f(x'') = f_{\Sigma''}(x'') &\qquad \forall x'' \in \partial N_c \cap \Sigma'',\\
    f(y) = y &\qquad \forall y \in c.
    \end{align*}
    Then $[f] \in \PMCG_\sigma(\Sigma)$ by definition and $f|_{\Sigma'}$ coincides with $f_{\Sigma'}$ outside some disjoint discs about the punctures of $\Sigma'$ for every $\Sigma' \in c(\sigma)$. Because the mapping class group of a punctured disc is trivial it follows that
    $[f|_{\Sigma'}] = [f_{\Sigma'}]$ for every $\Sigma' \in c(\sigma)$.
    
\end{proof}

\begin{prop}
    Define a map
    \begin{align*}
    F_\sigma \colon \Teich_{\sigma}(\Sigma) = \prod_{\Sigma' \in c(\sigma)} \Teich(\Sigma') &\longrightarrow \prod_{\Sigma' \in c(\sigma)} \Moduli^*(\Sigma'),\\
    ([\rho_{\Sigma'}])_{\Sigma' \in c(\sigma)} &\longmapsto (\pi_{\Sigma'}([\rho_{\Sigma'}]))_{\Sigma' \in c(\sigma)}.
    \end{align*}
    Then $F_\sigma$ descends to a homeomorphism:
    
    \begin{center}
        \begin{tikzcd}
            & \Teich_{\sigma}(\Sigma) \arrow[dl,"p_\sigma"] \arrow[dr,"F_\sigma"] & \\
            \Moduli^*_\sigma(\Sigma) \arrow[rr,dashed,"\ol{F}_\sigma","\cong"'] & & \displaystyle \prod_{\Sigma' \in c(\sigma)} \Moduli^*(\Sigma')
        \end{tikzcd} 
    \end{center}
\end{prop}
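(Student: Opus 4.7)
\medskip

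The plan is to exhibit $\ol{F}_\sigma$ as the canonical homeomorphism that appears when two quotient presentations of the same topological space coincide. The central input is the short exact sequence of Lemma \ref{lem:CuttingHomom}: the action of $\PMCG_\sigma(\Sigma)$ on $\Teich_\sigma(\Sigma)$ will factor through $\phi_\sigma$ onto $\prod_{\Sigma' \in c(\sigma)} \PMCG(\Sigma')$, and the latter group acts coordinatewise on the product $\prod \Teich(\Sigma')$, so the $\PMCG_\sigma(\Sigma)$-orbits on $\Teich_\sigma(\Sigma)$ coincide with the product of $\PMCG(\Sigma')$-orbits.

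\textbf{Step 1 (descent).} For $\varphi = [f] \in \PMCG_\sigma(\Sigma)$ one may isotope $f$ so as to fix each curve $\alpha \in \sigma$ pointwise, and since $f$ is orientation preserving it then also fixes each component $\Sigma' \in c(\sigma)$ setwise. The action formula from subsection \ref{subsect:aug_moduli} simplifies to
\[
\mf r \cdot \varphi = \big([\rho_{\Sigma'} \circ (f|_{\Sigma'})_*]\big)_{\Sigma' \in c(\sigma)},
\]
which is precisely the diagonal action of $\phi_\sigma(\varphi) = ([f|_{\Sigma'}])_{\Sigma'} \in \prod \PMCG(\Sigma')$. In particular, elements of $\ker \phi_\sigma$ act trivially on $\Teich_\sigma(\Sigma)$: by Lemma \ref{lem:CuttingHomom} they are generated by Dehn twists $T_\alpha$, $\alpha \in \sigma$, and the restriction $T_\alpha|_{\Sigma'}$ is supported in a collar of a puncture of $\Sigma'$, hence isotopic to the identity (no boundary is present). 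Composing $F_\sigma$ with the coordinatewise quotient maps $\pi_{\Sigma'}$ thus kills the $\PMCG_\sigma(\Sigma)$-action, so $F_\sigma$ descends to a well-defined continuous map $\ol{F}_\sigma \colon \Moduli^*_\sigma(\Sigma) \to \prod \Moduli^*(\Sigma')$.

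\textbf{Step 2 (bijectivity).} Surjectivity of $\ol{F}_\sigma$ is immediate from the surjectivity of each $\pi_{\Sigma'}$. For injectivity, suppose $F_\sigma(\mf r) = F_\sigma(\mf r')$. Then for every $\Sigma' \in c(\sigma)$ there exists $[g_{\Sigma'}] \in \PMCG(\Sigma')$ with $[\rho_{\Sigma'} \circ (g_{\Sigma'})_*] = [\rho'_{\Sigma'}]$. Applying the surjectivity of $\phi_\sigma$ from Lemma \ref{lem:CuttingHomom} I lift the tuple $([g_{\Sigma'}])_{\Sigma'}$ to a single $\varphi \in \PMCG_\sigma(\Sigma)$; by Step 1, $\mf r \cdot \varphi = \mf r'$ in $\Teich_\sigma(\Sigma)$, so $p_\sigma(\mf r) = p_\sigma(\mf r')$.

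\textbf{Step 3 (homeomorphism).} The map $F_\sigma = \prod \pi_{\Sigma'}$ is open, because each $\pi_{\Sigma'}$ is the quotient map for a continuous group action (hence open). Since $p_\sigma$ is a surjection and $\ol{F}_\sigma \circ p_\sigma = F_\sigma$, for any open $U \subseteq \Moduli^*_\sigma(\Sigma)$ one has $\ol{F}_\sigma(U) = F_\sigma(p_\sigma^{-1}(U))$, which is open. Thus $\ol{F}_\sigma$ is a continuous open bijection, hence a homeomorphism.

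The main point to get right is Step 1, namely the triviality of the action of $\ker \phi_\sigma$: everything else is a formal consequence of the exact sequence in Lemma \ref{lem:CuttingHomom} together with standard facts about open quotient maps and products.
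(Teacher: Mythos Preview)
Your proof is correct and follows essentially the same route as the paper: both use the action formula to see that $\PMCG_\sigma(\Sigma)$ acts on $\Teich_\sigma(\Sigma)$ through $\phi_\sigma$, invoke surjectivity of $\phi_\sigma$ from Lemma~\ref{lem:CuttingHomom} for injectivity, and conclude by openness of the quotient maps. Your explicit remark about Dehn twists acting trivially is accurate but slightly redundant, since the action formula $\mf r \cdot \varphi = ([\rho_{\Sigma'} \circ (f|_{\Sigma'})_*])_{\Sigma'}$ already shows the action factors through $\phi_\sigma$.
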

\begin{proof}
    Let us see that $\ol{F}_\sigma$ is well-defined. Let $([\rho_{\Sigma'}])_{\Sigma' \in c(\sigma)}, ([\rho'_{\Sigma'}])_{\Sigma' \in c(\sigma)} \in \Teich_{\sigma}(\Sigma)$ such that $p_{\sigma}(([\rho_{\Sigma'}])_{\Sigma' \in c(\sigma)}) = p_{\sigma}(([\rho'_{\Sigma'}])_{\Sigma' \in c(\sigma)})$. Then there is $[f] \in \PMCG_\sigma(\Sigma)$ such that 
    \[([\rho'_{\Sigma'}])_{\Sigma' \in c(\sigma)} = ([\rho_{\Sigma'}])_{\Sigma' \in c(\sigma)} \cdot  [f] 
    =([\rho_{\Sigma'} \circ (f|_{\Sigma'})_*])_{\Sigma' \in c(\sigma)}
    =( [\rho_{\Sigma'}] \cdot [f|_{\Sigma'}])_{\Sigma' \in c(\sigma)}.\]
    Thus $F_{\sigma}(([\rho_{\Sigma'}])_{\Sigma' \in c(\sigma)}) = F_\sigma(([\rho'_{\Sigma'}])_{\Sigma' \in c(\sigma)})$ as required. 
    
    Let us see that $\ol{F}_\sigma$ is injective. Let $([\rho_{\Sigma'}])_{\Sigma' \in c(\sigma)},([\rho'_{\Sigma'}])_{\Sigma' \in c(\sigma)} \in \Teich_{\sigma}(\Sigma)$ such that $F_\sigma(([\rho_{\Sigma'}])_{\Sigma' \in c(\sigma)}) = F_\sigma(([\rho'_{\Sigma'}])_{\Sigma' \in c(\sigma)})$. Then there are $[f_{\Sigma'}] \in \PMCG(\Sigma')$ such that $[\rho'_{\Sigma'}] = [\rho_{\Sigma'}] \cdot [f_{\Sigma'}]$ for every $\Sigma' \in c(\sigma)$. 
    By Lemma \ref{lem:CuttingHomom} there is an $[f] \in \PMCG_{\sigma}(\Sigma)$ such that $\phi_\sigma([f])=([f|_{\Sigma'}])_{\Sigma' \in c(\sigma)} = ([f_{\Sigma'}])_{\Sigma' \in c(\sigma)}$. Thus, $([\rho_{\Sigma'}])_{\Sigma' \in c(\sigma)} \cdot [f]  = ([\rho'_{\Sigma'}])_{\Sigma' \in c(\sigma)}$ and $p_\sigma(([\rho_{\Sigma'}])_{\Sigma' \in c(\sigma)}) = p_\sigma(([\rho'_{\Sigma'}])_{\Sigma' \in c(\sigma)})$.
    
    Finally, $\ol{F}_\sigma$ is surjective because $F_\sigma$ is. Moreover, because $F_\sigma$ and $p_\sigma$ are both open and continuous maps, also $\ol{F}_\sigma$ is open and continuous, whence $\ol{F}_\sigma$ is a homeomorphism.
\end{proof}

Projecting $\Teich_{\sigma}(\Sigma) \subset \augTeich(\Sigma)$ to the augmented moduli space $\augModuli(\Sigma)$ we obtain a continuous map $\Teich_{\sigma}(\Sigma) \longrightarrow \augModuli(\Sigma)$. This map descends to a continuous map $\Moduli_\sigma^*(\Sigma) \longrightarrow \augModuli(\Sigma)$. Using the homeomorphism $\ol{F}_{\sigma}$ we obtain a continuous map $A_{\sigma} \colon \prod_{\Sigma' \in c(\sigma)} \Moduli^*(\Sigma') \longrightarrow \augModuli(\Sigma)$, such that the following diagram commutes:
\begin{center}
    \begin{tikzcd}[row sep = large, column sep = large]
        \Teich_{\sigma}(\Sigma) \arrow[r] \arrow[d,"p_\sigma"] 
            & \augModuli(\Sigma) \\
        \Moduli^*_{\sigma}(\Sigma) \arrow[r,"\ol{F}_\sigma","\cong"'] 
            \arrow[ur]
            & \displaystyle \prod_{\Sigma' \in c(\sigma)} \Moduli^*(\Sigma') \arrow[u,dashed,"A_\sigma"]
    \end{tikzcd}
    
\end{center}

\begin{defn} \label{defn:AssemblyMap}
    We will call the map $A_{\sigma} \colon \prod_{\Sigma' \in c(\sigma)} \Moduli^*(\Sigma') \longrightarrow \augModuli(\Sigma)$ the \emph{assembly map with respect to $\sigma \subset \mc{C}(\Sigma)$}. 
\end{defn} 

It will be useful to know that the augmented moduli space is covered by the images of a \emph{finite number} of assembly maps. The key here is the following observation due to Harvey \cite{harveyCurveComplex}.

\begin{lemma}[\cite{harveyCurveComplex}]
    The mapping class group $\MCG(\Sigma)$ acts on the curve complex $\mc{C}(\Sigma)$ simplicially and the quotient is a \emph{finite} simplicial complex.
\end{lemma}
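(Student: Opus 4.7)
The plan is to verify two separate claims: first, that $\MCG(\Sigma)$ acts on $\mc{C}(\Sigma)$ by simplicial automorphisms, and second, that this action has only finitely many orbits of simplices.

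For the simpliciality of the action, I would argue directly from the definition. A mapping class $\phi = [f] \in \MCG(\Sigma)$ acts on vertices of $\mc{C}(\Sigma)$ by $\alpha \mapsto f(\alpha)$, sending isotopy classes of essential simple closed curves to isotopy classes of essential simple closed curves (since $f$ is an orientation-preserving homeomorphism, it preserves both the property of being essential and the simplicity of a curve). If $\sigma = \{\alpha_1, \ldots, \alpha_l\}$ is an $(l-1)$-simplex, by definition there exist pairwise disjoint representatives $c_1, \ldots, c_l$. Then $f(c_1), \ldots, f(c_l)$ are pairwise disjoint representatives of $f(\alpha_1), \ldots, f(\alpha_l)$, so the image is again a simplex. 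This shows the action is simplicial.

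For the finiteness of the quotient, the key concept is the \emph{topological type} of a simplex. To each simplex $\sigma \subset \mc{C}(\Sigma)$ I associate the homeomorphism type of the pair $(\Sigma, \sigma)$, equivalently the dual graph $G_\sigma$ whose vertices correspond to the components $\Sigma' \in c(\sigma)$ (labeled by the homeomorphism type of $\Sigma'$, that is, by its genus and number of punctures/boundary circles obtained by cutting along $\sigma$) and whose edges correspond to curves in $\sigma$ (joining the components adjacent to each curve, possibly as loops). I would then invoke the classical \emph{change of coordinates principle} for surfaces (see e.g.\ Farb--Margalit, Chapter 1): two multicurves on $\Sigma$ with the same topological type differ by an orientation-preserving homeomorphism of $\Sigma$. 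Hence two simplices of the same topological type lie in the same $\MCG(\Sigma)$-orbit.

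It remains to observe that only finitely many topological types occur. The number of components in $c(\sigma)$ is at most $2g-2+p$ (since each pair of pants contributes $-1$ to the Euler characteristic and $\chi$ is additive), the number of curves in $\sigma$ is at most $3g-3+p$, and each component is homeomorphic to a surface whose genus and number of boundary/puncture circles are bounded by the topological invariants of $\Sigma$. Thus there are only finitely many possible dual graphs with admissible vertex labels, hence finitely many topological types of simplices, hence finitely many $\MCG(\Sigma)$-orbits. The main technical point, and the step I expect to demand the most care, is the change of coordinates principle applied to multicurves rather than single curves; this is classical but not completely trivial, and one can prove it inductively by peeling off one curve at a time and applying the single-curve version, using that a homeomorphism which agrees with the prescribed one on a neighborhood of a curve can be extended across a cut.
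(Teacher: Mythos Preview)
The paper does not actually prove this lemma; it is quoted as a known result of Harvey with a citation to \cite{harveyCurveComplex} and no argument is given. Your sketch is correct and is the standard way to establish the result: the simpliciality of the action is immediate from the definitions, and finiteness of the quotient follows from the change of coordinates principle (two multicurves of the same topological type are related by an element of $\MCG(\Sigma)$) together with the observation that only finitely many topological types of labeled dual graphs can occur on a fixed finite-type surface. The one place where genuine care is needed, as you already flag, is the change of coordinates for multicurves rather than single curves; the inductive peel-off argument you outline is exactly how this is handled in Farb--Margalit, so your proposal is complete as a proof sketch even though the paper itself opts to cite the result rather than reprove it.
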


One may think of this statement as the fact that one can dissect a given surface $\Sigma$ only in finitely many different ways up to homeomorphism.

\begin{prop}\label{prop:FinitelyCoveredByAssemblyMaps}
    Let $\{ \sigma_i \colon i \in I\}$ be a finite system of representatives for the simplices in the curve complex $\mc{C}(\Sigma)$ with respect to the mapping class group action. Then augmented moduli space is covered by the images of the assembly maps $A_{\sigma_i}$, $i \in I$.
\end{prop}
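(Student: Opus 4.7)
The plan is to take an arbitrary point in $\augModuli(\Sigma)$, lift it to a stratum of $\augTeich(\Sigma)$, and use the mapping class group action on the curve complex to move that stratum onto one indexed by a representative $\sigma_i$.

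More precisely, let $\mb{x} \in \augModuli(\Sigma)$. By definition there exist a simplex $\sigma \subset \mc{C}(\Sigma)$ and a point $\mf{r} = ([\rho_{\Sigma'}])_{\Sigma' \in c(\sigma)} \in \Teich_\sigma(\Sigma)$ whose image under the quotient map $\augTeich(\Sigma) \to \augModuli(\Sigma)$ is $\mb{x}$. Since $\{\sigma_i : i \in I\}$ is a system of representatives for the $\MCG(\Sigma)$-orbits of simplices of $\mc{C}(\Sigma)$, there exist $i \in I$ and $\phi \in \MCG(\Sigma)$ with $\phi^{-1}(\sigma) = \sigma_i$. The mapping class group action permutes the strata of $\augTeich(\Sigma)$ via
\[ \Teich_\sigma(\Sigma) \xrightarrow{\phi} \Teich_{\phi^{-1}(\sigma)}(\Sigma) = \Teich_{\sigma_i}(\Sigma), \]
so $\mf{r} \cdot \phi \in \Teich_{\sigma_i}(\Sigma)$ and, by $\MCG(\Sigma)$-equivariance of the quotient map, $\mf{r}$ and $\mf{r} \cdot \phi$ define the same point $\mb{x} \in \augModuli(\Sigma)$.

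It remains to observe that the composition $\Teich_{\sigma_i}(\Sigma) \to \augModuli(\Sigma)$ factors as
\[ \Teich_{\sigma_i}(\Sigma) \xrightarrow{\ol{F}_{\sigma_i} \circ p_{\sigma_i}} \prod_{\Sigma' \in c(\sigma_i)} \Moduli^*(\Sigma') \xrightarrow{A_{\sigma_i}} \augModuli(\Sigma), \]
by the commutative diagram defining $A_{\sigma_i}$. Hence $\mb{x}$ lies in the image of $A_{\sigma_i}$. Since $\mb{x}$ was arbitrary and $I$ is finite (by Harvey's lemma), the finitely many images $A_{\sigma_i}(\prod_{\Sigma' \in c(\sigma_i)} \Moduli^*(\Sigma'))$, $i \in I$, cover $\augModuli(\Sigma)$. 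No step in this argument is hard; the only substantive ingredient beyond bookkeeping is the finiteness of $\MCG(\Sigma) \backslash \mc{C}(\Sigma)$, which is provided by the preceding lemma.
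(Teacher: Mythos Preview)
Your proof is correct and follows essentially the same approach as the paper: the paper simply asserts that the projection $\bigsqcup_{i \in I} \Teich_{\sigma_i}(\Sigma) \to \augModuli(\Sigma)$ is surjective and then invokes the commutative diagram defining the $A_{\sigma_i}$, whereas you spell out explicitly why that projection is surjective (lift, then translate to a representative stratum via the $\MCG(\Sigma)$-action). The content is the same.
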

\begin{proof}
    Because $\{ \sigma_i \colon i \in I\}$ is a system of representatives the projection 
    \[ \bigsqcup_{i \in I} \Teich_{\sigma_i}(\Sigma) \longrightarrow \augModuli(\Sigma)\]
    is surjective. We obtain as above a commutative diagram:
    \begin{center}
        \begin{tikzcd}[row sep = large, column sep = large]
            \displaystyle \bigsqcup_{i \in I} \Teich_{\sigma_i}(\Sigma) \arrow[r,two heads] \arrow[d] 
            & \augModuli(\Sigma) \\
            \displaystyle \bigsqcup_{i \in I} \Moduli^*_{\sigma_i}(\Sigma) \arrow[r,"\cong"] 
            \arrow[ur,two heads]
            & \displaystyle \bigsqcup_{i \in I} \prod_{\Sigma' \in c(\sigma_i)} \Moduli^*(\Sigma') \arrow[u,two heads, "A"]
        \end{tikzcd}
        
    \end{center}
    Here the map $A$ is defined as $A_{\sigma_i}$ on each $\prod_{\Sigma' \in c(\sigma_i)} \Moduli^*(\Sigma')$, $i\in I$, such that augmented moduli space is indeed covered by the images of all $\{A_{\sigma_i} :i\in I \}$.
    
\end{proof}


\section{The Space of Closed Subgroups} \label{sect:space_of_closed_subgroups}

Previously, we have considered the algebraic topology on $\Rep(\Sigma)$. We will now introduce another topology which seems to be stronger apriori. However, we will see that for surfaces of finite type this topology coincides with the algebraic topology.

\subsection{Chabauty Topology}

This new topology makes use of the so-called Chabauty topology on the space of closed subgroups of $G$.

\begin{defn}[Space of closed subgroups $\Sub(G)$]
	We denote by $\Sub(G)$ the set of closed subgroups of $G$.
	For open subsets $U \subseteq G$ and compact subsets $K \subseteq G$ we define the sets
	$$ \mc{O}(K) \coloneqq \{ A \in \Sub(G) \st A \cap K = \emptyset \}, \qquad \mc{O}'(U) \coloneqq \{ A \in \Sub(G) \st A \cap U \neq \emptyset \}.$$
	One can show that these form a subbasis for a topology on $\Sub(G)$. This topology is called the $\emph{Chabauty topology}$.
\end{defn}

The most important property of this topology is that $\Sub(G)$ is compact.

\begin{lemma}[\cite{canary}]
	The space of closed subgroups $\Sub(G)$ is compact and metrizable.
\end{lemma}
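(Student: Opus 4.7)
The plan is to reduce both assertions to standard facts about the Hausdorff metric on compact metric spaces. Since $G = \PSL(2,\RR)$ is locally compact, second countable and Hausdorff, I would fix a left-invariant metric $d$ compatible with its topology (via Birkhoff--Kakutani) and choose an exhaustion $G = \bigcup_{n \in \NN} U_n$ by relatively compact open sets with $\overline{U_n} \subseteq U_{n+1}$. For any closed subset $A \subseteq G$, the slice $A \cap \overline{U_n}$ is then a closed subset of the compact metric space $\overline{U_n}$.

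For compactness I would first establish the stronger fact that the whole space $\mc{C}(G)$ of closed subsets of $G$ is compact in the Chabauty (Fell) topology, and then observe that $\Sub(G) \subseteq \mc{C}(G)$ is closed. Compactness of $\mc{C}(G)$ follows from a diagonal argument: given a sequence $(A_k)_{k \in \NN}$, the Blaschke selection theorem applied to each $\overline{U_n}$ provides a subsequence along which $A_k \cap \overline{U_n}$ converges in Hausdorff distance, and a diagonal extraction produces a single subsequence along which this holds simultaneously for every $n$; the compatible limits assemble into a closed set $A \subseteq G$ to which the subsequence Chabauty-converges. To see that $\Sub(G)$ is closed, suppose $A_\alpha \to A$ with $A_\alpha \in \Sub(G)$ and let $g, h \in A$. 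The subbasis element $\mc{O}'(U)$ forces approximating sequences $g_\alpha, h_\alpha \in A_\alpha$ with $g_\alpha \to g$ and $h_\alpha \to h$, hence $g_\alpha h_\alpha^{-1} \to g h^{-1}$; and the $\mc{O}(K)$ conditions used to define convergence prevent $g h^{-1}$ from escaping $A$, so $g h^{-1} \in A$.

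For metrizability I would construct an explicit metric from the exhaustion by setting
\[ \rho(A,B) \coloneqq \sum_{n=1}^\infty 2^{-n} \min\!\bigl(1, d_H^{(n)}(A,B)\bigr),\]
where $d_H^{(n)}(A,B)$ is the Hausdorff distance between $A \cap \overline{U_n}$ and $B \cap \overline{U_n}$ in $\overline{U_n}$, using the convention that the term is $0$ when both slices are empty and equals $\diam(\overline{U_n})$ when exactly one of them is empty. One checks routinely that $\rho$ is a metric and that a sequence $(A_k)$ satisfies $\rho(A_k, A) \to 0$ precisely when, for every compact $K \subset G$ disjoint from $A$, eventually $A_k \cap K = \emptyset$, and for every open $U$ meeting $A$, eventually $A_k \cap U \neq \emptyset$; this is exactly Chabauty convergence. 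Alternatively, once compactness of $\Sub(G)$ is in hand, metrizability follows from Urysohn's theorem together with the second countability of $\Sub(G)$, which is inherited from the second countability of $G$ by taking $K$ and $U$ from countable bases of compacta and opens.

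The main obstacle I expect lies in the careful matching between $\rho$-convergence and Chabauty convergence: the two subbasic conditions $\mc{O}(K)$ (controlling escape of mass outside compacta) and $\mc{O}'(U)$ (controlling presence inside opens) are dual in nature, and relating them uniformly to the Hausdorff approximations $A \cap \overline{U_n}$ requires some bookkeeping, especially handling the degenerate case where the slices are empty. Verifying that the Chabauty limit of subgroups is itself a subgroup is comparatively easier once the right approximation statement is isolated. These technicalities are the reason the result is typically cited rather than reproved, and why the author simply refers to \cite{canary}.
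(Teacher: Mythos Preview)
The paper does not supply its own proof of this lemma; it merely cites \cite{canary}. So there is no in-paper argument to compare against, and your proposal has to be judged on its own merits.

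Your closedness argument for $\Sub(G)$ inside $\mc{C}(G)$ and your alternative route to metrizability via second countability plus Urysohn are both fine. However, the explicit metric $\rho$ you write down does not induce the Chabauty topology, and the same defect undermines the Blaschke--diagonal compactness argument. The issue is that the slicing map $A \mapsto A \cap \overline{U_n}$ is discontinuous at $\partial U_n$. Concretely, take $G=\RR$, $U_1=(-1,1)$, $A_k=\{1+1/k\}$ and $A=\{1\}$: then $A_k \to A$ in the Chabauty topology, yet $A_k \cap \overline{U_1}=\emptyset$ while $A \cap \overline{U_1}=\{1\}$, so by your convention $d_H^{(1)}(A_k,A)=\diam(\overline{U_1})$ for every $k$ and $\rho(A_k,A)\not\to 0$. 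The same example shows that the Hausdorff limits of the slices on different levels $n$ need not be mutually compatible, so they do not simply ``assemble'' into a Chabauty limit. The standard remedy, as in \cite{canary} or \cite{benedettipetronio}, is to pass to the one-point compactification $G^+=G\cup\{\infty\}$: closed subsets $A\subseteq G$ correspond to closed subsets $A\cup\{\infty\}\subseteq G^+$, and under this identification the Chabauty topology agrees with the Hausdorff-metric topology on the compact metrizable space $G^+$, yielding compactness and metrizability at once.
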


The following characterization is often useful.

\begin{prop}[\cite{canary}] \label{prop:charact_con_Chab}
	A sequence $(H_n)_{n \in \NN} \subseteq \Sub(G)$ converges to $H \in \Sub(G)$ if and only if the following two conditions are satisfied:
	\begin{enumerate}
		\item[(C1)]  For every $h \in H$ there is a sequence $(h_n)_{n \in \NN} \subseteq G$ such that $h_n \in H_n$ for every $n \in \NN$ and $h = \lim\limits_{n\to \infty} h_n$.
		\item[(C2)] \label{chab_conv2} If $h \in G$ is the limit of a sequence $(h_{n_k})_{k \in \NN} \subseteq H$ such that $h_{n_k} \in H_{n_k}$ for every $k \in \NN$, then $h \in H$.
	\end{enumerate}
\end{prop}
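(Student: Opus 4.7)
The plan is to prove each implication separately, exploiting that $G=\PSL(2,\RR)$ is locally compact, Hausdorff, and metrizable, so that every point has a countable basis of open neighborhoods with compact closure. Throughout I will use only the definitions of the subbasic sets $\mc{O}(K)$ and $\mc{O}'(U)$.

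\textbf{Forward direction:} assume $H_n \to H$ in the Chabauty topology. For (C1), fix $h \in H$, pick a countable neighborhood basis $U_k = B(h, 1/k)$ of $h$, and note that $H \in \mc{O}'(U_k)$ for every $k$. By convergence, there is an increasing sequence $N_1 < N_2 < \cdots$ with $H_n \cap U_k \neq \emptyset$ for all $n \geq N_k$. Define $h_n \in H_n$ by picking any point of $H_n \cap U_k$ when $N_k \leq n < N_{k+1}$ and $h_n = e$ for $n < N_1$; then $h_n \to h$. For (C2), suppose $h_{n_k} \in H_{n_k}$, $h_{n_k} \to h$, and argue by contradiction: if $h \notin H$, then since $H$ is closed and $G$ is locally compact Hausdorff, there is a compact neighborhood $K$ of $h$ disjoint from $H$. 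Thus $H \in \mc{O}(K)$, so eventually $H_n \cap K = \emptyset$, contradicting the fact that $h_{n_k} \in H_{n_k} \cap K$ for $k$ large.

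\textbf{Reverse direction:} assume (C1) and (C2). Because the Chabauty topology is generated by the subbasis $\{\mc{O}(K)\} \cup \{\mc{O}'(U)\}$, every open neighborhood of $H$ contains a finite intersection of such subbasic sets, so it suffices to check that $H_n$ is eventually in any given subbasic neighborhood of $H$. If $H \in \mc{O}'(U)$, pick $h \in H \cap U$ and apply (C1) to find $h_n \in H_n$ with $h_n \to h$; then $h_n \in U$ for large $n$, so $H_n \in \mc{O}'(U)$. If $H \in \mc{O}(K)$ with $K$ compact, suppose for contradiction that $H_{n_k} \cap K \neq \emptyset$ along some subsequence, and choose $h_{n_k} \in H_{n_k} \cap K$. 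By compactness of $K$, a further subsequence satisfies $h_{n_k} \to h \in K$; then (C2) yields $h \in H \cap K$, contradicting $H \in \mc{O}(K)$.

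The main obstacle, which is only mildly subtle, is producing in (C1) a sequence indexed by \emph{all} of $\NN$ (not merely a subsequence) that converges to the prescribed $h \in H$; this is handled by the diagonal construction using the nested neighborhood basis $U_k$. Everything else is a routine unpacking of the subbasis: the compactness and local compactness of $G$ do all the real work, letting us both trap limit points inside compact sets in the $\mc{O}(K)$ case and separate $h$ from $H$ by a compact neighborhood when $h \notin H$.
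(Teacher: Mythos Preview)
Your proof is correct and is the standard argument for this well-known characterization. Note, however, that the paper does not actually prove this proposition: it is stated with a citation to \cite{canary} and no proof is given in the text, so there is nothing to compare against beyond confirming that your argument is sound. The only minor remark is that in (C1) you should ensure the $N_k$ are chosen inductively so that $N_1 < N_2 < \cdots$ (which is clearly possible), and that setting $h_n = e$ for $n < N_1$ is harmless since $e \in H_n$ as each $H_n$ is a subgroup; you handle both points correctly.
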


\begin{lemma}[\cite{gelanderIRS15}]
	The group $G$ acts continuously on $\Sub(G)$ via conjugation
	\begin{align*}
	G \times \Sub(G) &\longrightarrow \Sub(G)\\
	(g,H) &\longmapsto g H g^{-1}.
	\end{align*}
\end{lemma}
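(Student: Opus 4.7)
The plan is to verify continuity sequentially. Since $\Sub(G)$ is metrizable by the preceding lemma and $G$ is a Lie group, the product $G \times \Sub(G)$ is metrizable, so it suffices to show that whenever $(g_n, H_n) \to (g, H)$ in $G \times \Sub(G)$, we have $g_n H_n g_n^{-1} \to g H g^{-1}$ in the Chabauty topology. I would verify the two conditions (C1) and (C2) from Proposition \ref{prop:charact_con_Chab}, combined with the continuity of the multiplication and inversion in $G$.

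For (C1), given any element $h \in gHg^{-1}$, write $h = gkg^{-1}$ for some $k \in H$. By (C1) applied to the convergence $H_n \to H$, there exist $k_n \in H_n$ with $k_n \to k$. Setting $h_n \coloneqq g_n k_n g_n^{-1} \in g_n H_n g_n^{-1}$, the joint continuity of the group operations in $G$ gives $h_n \to gkg^{-1} = h$, as required.

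For (C2), suppose a subsequence $h_{n_k} \in g_{n_k} H_{n_k} g_{n_k}^{-1}$ converges to some $h \in G$. Writing $h_{n_k} = g_{n_k} k_{n_k} g_{n_k}^{-1}$ with $k_{n_k} \in H_{n_k}$, we obtain $k_{n_k} = g_{n_k}^{-1} h_{n_k} g_{n_k}$, which converges to $g^{-1} h g$ by continuity in $G$. Applying (C2) to the convergence $H_n \to H$ yields $g^{-1} h g \in H$, so $h \in gHg^{-1}$, as required.

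There is no real obstacle here: the statement is a direct consequence of the sequential characterization of Chabauty convergence and the continuity of the group operations on $G$. One could alternatively give a purely topological proof by showing that preimages of subbasic sets $\mc{O}(K)$ and $\mc{O}'(U)$ are open, using the compactness of $K$ and the openness of $U$ together with the fact that the map $(g, x) \mapsto gxg^{-1}$ on $G \times G$ is continuous, but the sequential argument above is the cleanest path.
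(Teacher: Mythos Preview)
Your argument is correct. Note, however, that the paper does not actually supply a proof of this lemma: it is stated with a citation to \cite{gelanderIRS15} and left unproved. So there is no ``paper's own proof'' to compare against. What you have written is the standard sequential verification via the characterization in Proposition~\ref{prop:charact_con_Chab}, and it goes through without issue.
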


\begin{defn}
	We define the following subsets
	\begin{align*}
	\Subd(G) &:= \{ \Gamma \in \Sub(G) \,| \, \Gamma \text{ is discrete} \}, \text{ and}\\
	\Subdtf(G) &:= \{ \Gamma \in \Sub(G) \, | \, \Gamma \text{ is discrete and torsion-free} \},
	\end{align*}
	and equip them with the subspace topology.
\end{defn}

\begin{lemma}[\cite{canary}] \label{lem:nbhddiscrete}
	Let $\Gamma \in \Subd(G)$. Then there is an open neighborhood $U \subseteq G$ of the identity $1 \in G$ and an open neighborhood $\mc{U} \subseteq \Sub(G)$ such that 
	\[ \Gamma' \cap U = \{1 \} \]
	for every $\Gamma' \in \mc{U}$.
\end{lemma}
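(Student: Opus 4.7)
The plan is to reduce this ``uniform discreteness'' statement to the no-small-subgroups property of the Lie group $G = \PSL(2,\RR)$. The Chabauty-open neighborhood $\mc U$ will be of the form $\mc O(K)$ for a carefully chosen compact set $K$ that is a ``punctured collar'' around $1$.

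First I would use discreteness of $\Gamma$ to pick an open neighborhood $U_0$ of $1$ with $\Gamma \cap U_0 = \{1\}$. Next, choosing $r > 0$ sufficiently small, the exponential map $\Exp \colon \Lie(G) \to G$ restricts to a diffeomorphism from the open ball $B_r$ (with respect to some fixed norm $\norm{\cdot}$ on $\Lie(G)$) onto a relatively compact open set $U \coloneqq \Exp(B_r) \subseteq U_0$, and $\ol U = \Exp(\ol{B_r})$. Setting $V \coloneqq \Exp(B_{r/2})$ and $K \coloneqq \ol U \setminus V$, the set $K$ is compact and disjoint from $\Gamma$ (since $1 \in V$ and $\ol U \cap \Gamma = \{1\}$). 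Therefore $\mc U \coloneqq \mc O(K)$ is a Chabauty-open neighborhood of $\Gamma$, and I would take the neighborhood of the identity in the statement to be $V$.

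The crux of the argument is then to show $\Gamma' \cap V = \{1\}$ for every $\Gamma' \in \mc U$. Suppose for contradiction that there were a nontrivial $g \in \Gamma' \cap V$; write $g = \Exp(X)$ with $0 < \norm{X} < r/2$. Since $t \mapsto \Exp(tX)$ is a one-parameter subgroup, $g^n = \Exp(nX)$ for every $n \in \NN$. Taking $n = \lceil r/(2\norm{X}) \rceil$ places $n\norm{X}$ in $[r/2, r)$, so $g^n \in \Exp(\ol{B_r} \setminus B_{r/2}) = K$. Because $g^n \in \Gamma'$, this contradicts $\Gamma' \cap K = \emptyset$, so no such $g$ exists.

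The main subtle point I anticipate is checking that the exponential-chart sizes are compatible: choosing $r$ small enough that $\Exp$ is a diffeomorphism on a neighborhood of $\ol{B_r}$ (ensuring both the injectivity needed for $\Exp(\ol{B_r}) \setminus \Exp(B_{r/2}) = \Exp(\ol{B_r} \setminus B_{r/2})$ and the identification $\ol U = \Exp(\ol{B_r})$) while simultaneously ensuring $\ol U \subseteq U_0$. Once these standard Lie-theoretic inputs are in place, the ``power-escapes-the-chart'' argument closes the proof cleanly.
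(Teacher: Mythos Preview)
Your argument is correct. The paper itself does not supply a proof of this lemma; it is simply cited from \cite{canary}, so there is no in-paper proof to compare against. What you have written is precisely the standard ``no small subgroups'' argument one finds in the references (e.g.\ Canary--Epstein--Green or Benedetti--Petronio): the compact annulus $K=\ol U\setminus V$ in an exponential chart is disjoint from $\Gamma$, hence $\mc O(K)$ is a Chabauty neighborhood of $\Gamma$, and any nontrivial element of $\Gamma'\cap V$ would have a power landing in $K$, contradicting $\Gamma'\in\mc O(K)$. The only cosmetic point is that in the lemma's notation the neighborhood of $1$ is called $U$, whereas in your write-up it is the smaller set $V$; you note this yourself, and it causes no difficulty.
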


\begin{cor}
	$\Subd(G) \subseteq \Sub(G)$ is open.
\end{cor}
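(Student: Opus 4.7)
The plan is to deduce this essentially directly from Lemma \ref{lem:nbhddiscrete}, using only the general topological-group fact that a subgroup is discrete as soon as the identity is isolated in it.

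Concretely, I would argue as follows. Fix an arbitrary $\Gamma \in \Subd(G)$. Lemma \ref{lem:nbhddiscrete} supplies an open neighborhood $U \subseteq G$ of $1 \in G$ and an open neighborhood $\mc{U} \subseteq \Sub(G)$ of $\Gamma$ such that $\Gamma' \cap U = \{1\}$ for every $\Gamma' \in \mc{U}$. I claim that $\mc{U} \subseteq \Subd(G)$, which then exhibits $\Gamma$ as an interior point of $\Subd(G)$; since $\Gamma$ was arbitrary, this shows $\Subd(G) \subseteq \Sub(G)$ is open.

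To verify the claim, take any $\Gamma' \in \mc{U}$. The condition $\Gamma' \cap U = \{1\}$ means that the identity is isolated in $\Gamma'$ with respect to the subspace topology inherited from $G$. For any other element $\gamma \in \Gamma'$, left translation by $\gamma$ is a homeomorphism of $G$ carrying $\{1\}$ to $\{\gamma\}$ and $\Gamma'$ to itself, so $\gamma$ is also isolated in $\Gamma'$ via the neighborhood $\gamma U$. Hence $\Gamma'$ is discrete, so $\Gamma' \in \Subd(G)$.

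There is no real obstacle here; the substantive content is entirely contained in Lemma \ref{lem:nbhddiscrete}, and the corollary is just the observation that the neighborhood $\mc{U}$ produced by that lemma consists of discrete subgroups. If one wanted a self-contained argument one would have to reprove Lemma \ref{lem:nbhddiscrete}, but as stated we may use it directly.
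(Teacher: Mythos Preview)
Your argument is correct and is exactly the intended one: the paper states the corollary without proof immediately after Lemma \ref{lem:nbhddiscrete}, so the content is precisely that the open neighborhood $\mc{U}$ produced there consists of discrete subgroups, which you verify by the standard homogeneity argument.
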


\begin{remark}
	All of the above results hold in a much more general context where $G$ is an arbitrary Lie group; see \cite{benedettipetronio}.
\end{remark}

In order to understand the Chabauty topology geometrically, the following Proposition is very useful.

\begin{prop}[\cite{benedettipetronio}] \label{prop:ChabGeomCharact}
	Let $o \in \HH^2$ and $\Gamma \in \Subdtf(G)$. Then the following holds:
	
	For every $r > 0 , \epsilon >0$ there is an open neighborhood $\mc{U} \subset \Subdtf(G)$ of $\Gamma$ such that for every $\Gamma' \in \mc{U}$ there are open neighborhoods $\Omega, \Omega'\subseteq \HH^2$ of the closed ball $\ol{B}_o(r)$ and a diffeomorphism $f \colon \Omega \longrightarrow \Omega'$ satisfying:
	\begin{enumerate}
		\item $f(o) = o$,
		\item  $\pi_{\Gamma'}( f(x)) = \pi_{\Gamma'}(f(y)) \iff \pi_\Gamma(x) = \pi_\Gamma(y)$, for every $x,y \in \Omega$, and
		\item $D_{\ol{B}_o(r)}(f,\id) < \epsilon$,
	\end{enumerate}
	where $\pi_\Gamma \colon \HH^2 \longrightarrow \Gamma \backslash \HH^2, \pi_{\Gamma'} \colon \HH^2 \longrightarrow \Gamma' \backslash \HH^2$ are the respective quotient maps, and $D_K(f,g)$ denotes the $C^\infty$-distance between two diffeomorphisms $f$, $g$ defined on a neighborhood of a compact set $K \subseteq \HH^2$.
	
	In particular, the diffeomorphism $f \colon \Omega \longrightarrow \Omega'$ descends to a diffeomorphism $F$ mapping  $\pi_\Gamma(\Omega)\subseteq \Gamma \backslash \HH^2$ to $\pi_{\Gamma'}(\Omega')\subseteq \Gamma' \backslash \HH^2$:
	\begin{center}
		\begin{tikzcd}
			\Omega \subseteq \HH^2 \arrow[r, "f"] \arrow[d, "\pi_{\Gamma}"] 
			&  \Omega' \subseteq \HH^2 \arrow[d,"\pi_{\Gamma'}"] \\
			\pi_\Gamma(\Omega) \subseteq \Gamma \backslash \HH^2 \arrow[r,"F"]
			&\pi_{\Gamma'}(\Omega') \subseteq \Gamma' \backslash \HH^2
		\end{tikzcd}
	\end{center}
\end{prop}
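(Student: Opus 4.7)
I argue contrapositively: assuming the conclusion fails for some $r,\epsilon>0$, I produce a Chabauty-convergent sequence $\Gamma_n\to\Gamma$ in $\Subdtf(G)$ for which no diffeomorphism with the required properties exists, and derive a contradiction by actually constructing such diffeomorphisms $f_n$ for large $n$ with $D_{\ol{B}_o(r)}(f_n,\id)\to 0$. Equivalently, it is enough to prove: for any Chabauty-convergent sequence $\Gamma_n\to\Gamma$ in $\Subdtf(G)$, for all sufficiently large $n$ there exist open neighborhoods $\Omega_n,\Omega'_n$ of $\ol{B}_o(r)$ and diffeomorphisms $f_n\colon\Omega_n\to\Omega'_n$ fixing $o$, intertwining the two quotient relations, with $D_{\ol{B}_o(r)}(f_n,\id)\to 0$.

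\noindent\textbf{Step 1: Finite bookkeeping via discreteness and Chabauty convergence.}
Fix $R:=r+1$. Since $\Gamma$ is discrete, the set
\[ F=\bigl\{\gamma\in\Gamma \st d(o,\gamma o)\le 4R\bigr\}=\{\gamma_1,\ldots,\gamma_k\}\]
is finite. By condition (C1) of Proposition \ref{prop:charact_con_Chab} choose $\gamma_i^{(n)}\in\Gamma_n$ with $\gamma_i^{(n)}\to\gamma_i$. Using (C2) together with Lemma \ref{lem:nbhddiscrete} applied at each $\gamma_i$, I claim that for all large $n$ the full list of elements of $\Gamma_n$ displacing $o$ by at most $3R$ is exactly $\{\gamma_1^{(n)},\ldots,\gamma_k^{(n)}\}$. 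Indeed any further element $g_n\in\Gamma_n$ with $d(o,g_n o)\le 3R$ lies in a fixed compact subset of $G$, so passing to a subsequence $g_n\to g_\infty$; by (C2) $g_\infty\in\Gamma$, hence $g_\infty\in F$, and then the neighborhood property from Lemma \ref{lem:nbhddiscrete} applied to $\gamma_i^{-1}\Gamma_n$ near the identity forces $g_n=\gamma_i^{(n)}$ eventually. Thus the local geometry of the $\Gamma_n$-action on $\ol{B}_o(2R)$ is described by a finite, $C^\infty$-converging list of isometries $\gamma_i^{(n)}\to\gamma_i$.

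\noindent\textbf{Step 2: Construction of $f_n$ via perturbation of the Dirichlet domain.}
Let $D(\Gamma)$ be the Dirichlet fundamental domain for $\Gamma$ centered at $o$; its intersection with $\ol{B}_o(2R)$ is cut out by the finitely many geodesic bisectors $L_i=\{x\st d(x,o)=d(x,\gamma_i o)\}$, $i=2,\ldots,k$. Since $\gamma_i^{(n)}\to\gamma_i$ smoothly as elements of $G$, the analogous bisectors $L_i^{(n)}$ for $\Gamma_n$ converge to $L_i$ in $C^\infty$ on any fixed compact set, so the Dirichlet polygon $D(\Gamma_n)\cap\ol{B}_o(2R)$ is a smooth perturbation of $D(\Gamma)\cap\ol{B}_o(2R)$. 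I build $f_n$ as follows. On a closed neighborhood $V\subset D(\Gamma)\cap\ol{B}_o(2R)$ of $\ol{B}_o(r)\cap D(\Gamma)$, bounded away from the edges, set $f_n=\id$. Using a partition of unity supported in a thin collar of each edge $L_i$, interpolate smoothly from $\id$ on one side to the local diffeomorphism $\gamma_i^{(n)}\gamma_i^{-1}$ (which is the identity on the $\Gamma_n$-image of $V$) on a neighborhood of the $\gamma_i$-reflected side; extend the definition to all of $\ol{B}_o(r)$ by equivariantly transporting pieces through the identifications $\gamma_j$, replacing each $\gamma_j$ by $\gamma_j^{(n)}$. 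This produces a smooth map $f_n$ on a neighborhood $\Omega_n$ of $\ol{B}_o(r)$ such that $f_n(o)=o$, $f_n\circ\gamma_i=\gamma_i^{(n)}\circ f_n$ whenever both sides are defined, and $D_{\ol{B}_o(r)}(f_n,\id)\to 0$ because $\gamma_i^{(n)}\gamma_i^{-1}\to\id$ in $C^\infty$ on compact sets.

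\noindent\textbf{Step 3: Verification of the three conclusions and the descended diffeomorphism.}
The construction gives (i) and (iii) directly. For (ii), observe that two points $x,y\in\Omega_n$ satisfy $\pi_\Gamma(x)=\pi_\Gamma(y)$ iff $y=\gamma x$ for some $\gamma\in\Gamma$; when both $x,y\in\ol{B}_o(2R)$ this forces $\gamma\in F$, and by the intertwining property $f_n(y)=\gamma^{(n)}f_n(x)$, hence $\pi_{\Gamma_n}(f_n(x))=\pi_{\Gamma_n}(f_n(y))$. Conversely, if $\pi_{\Gamma_n}(f_n(x))=\pi_{\Gamma_n}(f_n(y))$ then $f_n(y)=g_n f_n(x)$ for some $g_n\in\Gamma_n$ of displacement at most $3R$ (by choosing $\Omega_n$ small enough), and by Step 1 $g_n=\gamma_i^{(n)}$ for a unique $\gamma_i\in F$; then $y=\gamma_i x$ follows from the intertwining relation and the injectivity of $f_n$. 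The descended diffeomorphism $F$ on quotients exists by the universal property of the covering maps $\pi_\Gamma,\pi_{\Gamma_n}$.

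\noindent\textbf{Main obstacle.}
The genuine difficulty is the construction in Step 2: one must glue the local model maps $\gamma_i^{(n)}\gamma_i^{-1}$ across the edges of a fundamental domain into a single \emph{diffeomorphism} (not merely a map) that respects all the identifications simultaneously, especially near vertices of the Dirichlet polygon where several edges meet. The key mechanism making this go through is that the $\gamma_i^{(n)}$ and $\gamma_i$ agree on the cycle relations up to arbitrarily small $C^\infty$-error, so any glitch introduced by the partition of unity shrinks to $0$ as $n\to\infty$.
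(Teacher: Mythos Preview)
The paper does not give its own proof of this proposition; it is quoted from \cite{benedettipetronio} and used as a black box. So there is no in-paper argument to compare against. What I can assess is whether your sketch stands on its own.

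Your overall architecture is the standard one and matches the cited reference: reduce to a sequential statement, use Chabauty convergence to get a finite list of short elements $\gamma_i^{(n)}\to\gamma_i$ that eventually satisfy the same multiplicative relations (via Lemma~\ref{lem:nbhddiscrete}), then build an almost-equivariant diffeomorphism by perturbing a fundamental polygon. Step~1 is fine, and Step~3 is essentially correct modulo a small bookkeeping issue: when you deduce $y=\gamma_i x$ from $f_n(y)=\gamma_i^{(n)}f_n(x)$ via the intertwining relation, you need $\gamma_i x\in\Omega_n$ for $f_n(\gamma_i x)$ to be defined; this is easy to arrange by taking $\Omega_n$ to be a fixed ball $B_o(r+\delta)$ and using that $d(y,\gamma_i x)$ is small, but it should be said.

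The genuine gap is exactly where you flag it. In Step~2 you write ``interpolate smoothly'' and ``extend equivariantly'' as if these were routine, but producing a single \emph{injective} smooth map that simultaneously satisfies $f_n\circ\gamma_i=\gamma_i^{(n)}\circ f_n$ for all relevant $\gamma_i$ near a vertex cycle is the entire content of the theorem. Your partition-of-unity description does not explain why the interpolants on overlapping collars are compatible, nor why the resulting map is a diffeomorphism rather than merely a smooth map $C^\infty$-close to the identity (closeness to $\id$ gives local injectivity, but you also need global injectivity on $\Omega_n$, which requires controlling how the glued pieces fit together). In \cite{benedettipetronio} this step occupies several pages and is carried out via an explicit inductive construction over a triangulation, not a one-line partition-of-unity argument. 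Your ``Main obstacle'' paragraph correctly names the difficulty but does not resolve it; as written, Step~2 is a plausibility argument, not a proof.
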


Informally, one can think of Proposition \ref{prop:ChabGeomCharact} as saying that large balls centered at base points $\pi_{\Gamma}(o)$ in $\Gamma \backslash \HH^2$ and $\pi_{\Gamma'}(o)$ in $\Gamma' \backslash \HH^2$ are almost identical if $\Gamma, \Gamma' \in \Subdtf(G)$ are close.

\subsection{Geometric Topology}

Using the Chabauty topology we can define the so-called geometric topology on $\Rep^*(\Sigma)$. This terminology is justified by its geometric implications; see Proposition \ref{prop:ChabGeomCharact}.

\begin{defn}[Geometric topology on $\Rep^*(\Sigma)$]
	The \emph{geometric topology} on $\Rep^*(\Sigma)$ is the initial topology with respect to the following two maps:
	\begin{enumerate}
		\item inclusion $\Rep^*(\Sigma) \hookrightarrow G^S, \rho \mapsto (\rho(s))_{s \in S}$, where $S \subset \pi_1(\Sigma)$ is a generating set for $\pi_1(\Sigma)$, and
		\item $ \im \colon \Rep^*(\Sigma) \longrightarrow \Subdtf(G), \rho \mapsto \im \rho, $
		which sends every (discrete) representation $\rho \in \Rep^*(\Sigma)$ to its image in $\Sub(G)$.
	\end{enumerate}
\end{defn}

Recall that the algebraic topology on $\Rep^*(\Sigma)$ is the initial topology with respect to just the inclusion $\Rep^*(\Sigma) \hookrightarrow G^S$, such that the geometric topology is apriori stronger than the algebraic topology. Using the fact that $\Sigma$ is of finite type we will show that it is \emph{not}. However, there are counterexamples for surfaces of infinite type; see \cite{canary} for more details.

\begin{prop} \label{prop:geometric_equals_algebraic}
	Consider $\Rep^*(\Sigma)$ with the algebraic topology. Then the map 
	\[\im \colon \Rep^*(\Sigma) \longrightarrow \Subdtf(G)\] 
	is a local homeomorphism onto its image $\mc{D}(\Sigma)$ which is the set of all discrete and torsion-free subgroups $\Gamma' < G$ such that there is an orientation preserving homeomorphism $f \colon \Sigma \longrightarrow \Gamma' \backslash\HH^2$.
	
	If we consider $\Rep(\Sigma)$ instead then the image of $\im \colon \Rep(\Sigma) \longrightarrow \Subdtf(G)$ consists of all lattices $\Gamma \in \mc{D}(\Sigma)$; we denote this set by $\mc{L}(\Sigma)$.
\end{prop}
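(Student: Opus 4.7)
The plan is to verify three properties that together imply $\im$ is a local homeomorphism onto its image $\mc{D}(\Sigma)$: continuity (with the algebraic topology on the source), local injectivity, and openness of $\im$ as a map into $\mc{D}(\Sigma)$. Surjectivity is tautological: every $\Gamma' \in \mc{D}(\Sigma)$ comes with an orientation preserving homeomorphism $f \colon \Sigma \to \Gamma' \backslash \HH^2$ whose holonomy representation lies in $\Rep^*(\Sigma)$ with image $\Gamma'$; conversely, $\im \rho \in \mc{D}(\Sigma)$ for every $\rho \in \Rep^*(\Sigma)$ by definition. For the restricted statement, Remark \ref{rem:PeripheralsParabolic} together with the standard fact that a torsion-free discrete subgroup of $G$ is a lattice iff its quotient has finite area yields $\im(\Rep(\Sigma)) = \mc{L}(\Sigma)$.

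Continuity is checked by verifying conditions (C1) and (C2) of Proposition \ref{prop:charact_con_Chab} for a sequence $\rho_n \to \rho$ with $\Gamma_n \coloneqq \im \rho_n$ and $\Gamma \coloneqq \im \rho$. Condition (C1) is immediate: each $h = \rho(\gamma) \in \Gamma$ is the limit of $\rho_n(\gamma) \in \Gamma_n$ by algebraic convergence. For (C2), suppose $h_{n_k} \in \Gamma_{n_k}$ converges to some $h \in G$. Since each $\rho_{n_k}$ is faithful, there is a unique $\gamma_{n_k} \in \pi_1(\Sigma)$ with $h_{n_k} = \rho_{n_k}(\gamma_{n_k})$. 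The displacements $d(h_{n_k} o, o)$ are bounded, so one must show that $\gamma_{n_k}$ ranges over only finitely many values: this uses that the orbit map $\gamma \mapsto \rho(\gamma) o$ is proper, a consequence of discreteness of $\Gamma$ and finite generation of $\pi_1(\Sigma)$, combined with a uniform ``fellow-traveler'' comparison between $\rho_{n_k}(\gamma)$ and $\rho(\gamma)$ along finite words in the generators. Passing to a constant subsequence $\gamma_{n_k} = \gamma$, algebraic convergence gives $h = \rho(\gamma) \in \Gamma$.

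Local injectivity is a direct consequence of discreteness. Fix $\rho \in \Rep^*(\Sigma)$ with image $\Gamma$ and a finite generating set $S \subset \pi_1(\Sigma)$. Since $\Gamma$ is discrete, there is an open neighborhood $V \subset G$ of $1$ such that $\rho(s) V \cap \Gamma = \{\rho(s)\}$ for every $s \in S$. Any $\rho' \in \Rep^*(\Sigma)$ close enough to $\rho$ in the algebraic topology that $\rho'(s) \in \rho(s) V$ for all $s \in S$, and additionally satisfying $\im \rho' = \Gamma$, must then have $\rho'(s) = \rho(s)$ for every $s \in S$, and hence $\rho' = \rho$.

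The main substance of the proof, and where the principal obstacle lies, is openness. Given $\rho_0 \in \Rep^*(\Sigma)$ with $\Gamma_0 = \im \rho_0$, fix an orientation preserving homeomorphism $f_0 \colon \Sigma \to \Gamma_0 \backslash \HH^2$ realizing $\rho_0$, and choose a compact core $K \subset \Gamma_0 \backslash \HH^2$ carrying the full topology of the quotient --- its existence uses the finite type of $\Sigma$, since the ends of $\Gamma_0 \backslash \HH^2$ consist of finitely many cusps and funnels. Select $r > 0$ large enough that $\pi_{\Gamma_0}(\ol{B}_o(r))$ contains an open thickening of $K$. For any $\Gamma'$ in a sufficiently small Chabauty neighborhood of $\Gamma_0$, Proposition \ref{prop:ChabGeomCharact} provides a $C^\infty$-small diffeomorphism $F$ between neighborhoods of the respective images of $\ol{B}_o(r)$ in $\Gamma_0 \backslash \HH^2$ and $\Gamma' \backslash \HH^2$. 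The hard part is to extend $F$ across the cusp and funnel ends to a global orientation preserving homeomorphism between the two quotients, using that nearby discrete subgroups produce isomorphic end structures; this is where the finite-type hypothesis is essential, and where the analogous statement for infinite-type surfaces breaks down. Composing the extension with $f_0$ and taking the holonomy produces $\rho' \in \Rep^*(\Sigma)$ with $\im \rho' = \Gamma'$, and the $C^\infty$-closeness of $F$ to the identity translates, through the deck-transformation description of holonomy, into $\rho'(s)$ being close to $\rho_0(s)$ for every $s \in S$, giving the required algebraic proximity.
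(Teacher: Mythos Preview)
Your overall architecture matches the paper's: continuity via (C1)/(C2), local injectivity from discreteness, and openness via the geometric comparison of Proposition~\ref{prop:ChabGeomCharact} applied to a compact core. Two of your steps, however, are underspecified in ways that matter.

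For (C2), the ``fellow-traveler comparison between $\rho_{n_k}(\gamma)$ and $\rho(\gamma)$ along finite words'' does not yield a word-length bound on $\gamma_{n_k}$ without circularity: the discrepancy between $\rho_{n_k}(\gamma)o$ and $\rho(\gamma)o$ is not controlled uniformly in $\gamma$ by pointwise convergence on generators (the displacement $x \mapsto d(\rho_{n_k}(s)x,\rho(s)x)$ is unbounded on $\HH^2$), so you cannot invoke properness of $\gamma \mapsto \rho(\gamma)o$ until you already know $\ell_S(\gamma_{n_k})$ is bounded. The paper resolves this with a direct geometric count: the images of the standard generators determine a geodesic polygon $P_n \subset \HH^2$ whose $\Gamma_n$-translates have pairwise disjoint interiors and whose diameter and area converge to those of $P_\infty$; since $\rho_{n_k}(\gamma_{n_k})o$ stays in a fixed ball, the number of translates of $P_{n_k}$ meeting that ball is bounded by an area ratio, and this bounds $\ell_S(\gamma_{n_k})$ uniformly.

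Your local-injectivity paragraph only shows that $\rho$ is isolated in the fibre $\im^{-1}(\Gamma)$: you explicitly assume $\im \rho' = \Gamma$. What is needed is injectivity of $\im$ on a full neighbourhood, i.e.\ that $\im \rho_1 = \im \rho_2$ for $\rho_1,\rho_2$ near $\rho$ (with common image possibly $\neq \Gamma$) forces $\rho_1 = \rho_2$. The fix is exactly Lemma~\ref{lem:nbhddiscrete}, which gives an identity neighbourhood $U \subset G$ and a Chabauty neighbourhood $\mc{V}$ of $\Gamma$ with $\Gamma' \cap U = \{1\}$ for \emph{every} $\Gamma' \in \mc{V}$; choosing $V$ with $V^{-1}V \subseteq U$ and taking $\rho_1,\rho_2$ close enough to $\rho$ that $\rho_i(s) \in \rho(s)V$ and $\im \rho_i \in \mc{V}$, one gets $\rho_2(s)^{-1}\rho_1(s) \in \im\rho_1 \cap U = \{1\}$ for all $s \in S$. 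The paper argues precisely this way, and then uses this local injectivity at the end of the openness step to identify the representation produced from Proposition~\ref{prop:ChabGeomCharact} with the given $\rho_n$.
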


\begin{remark}
	In particular, Proposition \ref{prop:geometric_equals_algebraic} proves that the geometric topology coincides with the algebraic topology on $\Rep^*(\Sigma)$.
\end{remark}

A proof of Proposition \ref{prop:geometric_equals_algebraic} may be found in \cite{harvey77}. Nevertheless, we decided to include a slightly different and more geometric proof using Proposition \ref{prop:ChabGeomCharact}.

\begin{proof}
	First, we shall prove that the map $\im$ is continuous.
	We need to prove that if a sequence $(\rho_n)_{n \in \NN} \subset \Rep^*(\Sigma)$ converges to $\rho_\infty \in \Rep^*(\Sigma)$ in the algebraic topology then it converges to $\rho_\infty$ in the geometric topology, too. To do so we will check (C1) and (C2) from Proposition \ref{prop:charact_con_Chab} and see that $\im \rho_n \to \im \rho_\infty$ as $n \to \infty$.
	
	\begin{enumerate}
		\item[(C1)] If $\rho_\infty(\gamma) \in \im \rho_\infty$, $\gamma \in \pi_1(\Sigma)$, then $\rho_n(\gamma) \to \rho_\infty(\gamma)$ as $n \to \infty$ by algebraic convergence.
		
		\item[(C2)] Let $(n_k) \subset \NN$ be a subsequence and $\gamma_{n_k} \in \pi_1(\Sigma)$ such that $\rho_{n_k}(\gamma_{n_k}) \in \im \rho_{n_k}$ converges to $g \in G$. Consider a standard generating set $$S = \{a_1, b_1, \ldots, a_g, b_g, c_1, \ldots, c_p\} \subset \pi_1(\Sigma)$$ 
		such that $\pi_1(\Sigma) = \langle S \, | \, \prod_{i=1}^g [a_i,b_i] \cdot c_1 \cdots c_p = 1 \rangle$. We set $\Gamma_n \coloneqq \rho_n(\pi_1(\Sigma)) = \im \rho_n$, and denote by $\pi_n \colon \HH^2 \longrightarrow \Gamma_n \backslash \HH^2=:X_n$ the respective covering maps for $n \in \NN \cup \{\infty\}$. Now, choose $o \in \HH^2$ and consider its images $x_n = \pi_n(o) \in X_n$. By construction the geodesics connecting $o$ to $\rho_n(s)o$, $s \in S$, project to closed geodesics at $x_n$ bounding a disc in $X_n$. This disc lifts to a geodesic polygon $P_n \subset \HH^2$ with vertices
		\[ \textstyle \left\{o, \rho_n(a_1)o, \rho_n(a_1 b_1)o, \rho_n(a_1 b_1 a_1^{-1}) o, \ldots, \rho_n(\prod_{i=1}^g [a_i,b_i] \cdot c_1 \cdots c_{p-1})o \right\}.\]
		Note that the interiors of the translates of the polygon $P_n$ are pairwise disjoint, i.e.
		$P_n^\circ \cap \rho_n(\gamma) P_n^\circ \neq \emptyset$ for all $\gamma \in \pi_1(\Sigma) \setminus\{1\}$. Further, the orbit maps $\pi_1(\Sigma) \longrightarrow \HH^2, \gamma \mapsto \rho_n(\gamma)o$ induce homeomorphisms from the Cayley graph $\Cay(\pi_1(\Sigma),S)$ to $\rho_n(\pi_1(\Sigma)) \cdot \partial P_n$.
		
		The vertices of $P_n$ are determined by the elements $\rho_n(s)$, $s \in S$, and by algebraic convergence $\rho_n \to \rho_\infty$ it follows that $v(P_n) \to v(P_\infty)$ and $\diam(P_n) \to \diam(P_\infty)$ as $n \to \infty$. Here $v$ denotes the hyperbolic area measure in $\HH^2$.
		
		Since $\rho_{n_k}(\gamma_{n_k})$ converges to $g$ there is an upper bound 
		\[d(o, \rho_{n_k}(\gamma_{n_k})o) < r.\] 
		Hence, $\rho_{n_k}(\gamma_{n_k})o$ is adjacent to a $\rho_{n_k}(\pi_1(\Sigma))$-translate of $P_{n_k}$ contained in the ball of radius $r + \diam(P_{n_k})$ about $o$. Since the interiors of two translates are pairwise disjoint the number of such translates is bounded from above by $ v(B_o(r+\diam(P_{n_k})))/ v(P_{n_k})$. Further, every translate of $P_{n_k}$ has $4 g + p$ edges. This yields the following rough estimate
		$$\ell_S(\gamma_{n_k}) \leq (4g + p) \cdot \frac{v(B_o(r+\diam(P_{n_k}))}{v(P_{n_k})}$$
		where $\ell_S(\gamma)$ denotes the word length of an element $\gamma \in \pi_1(\Sigma)$ with respect to the generating set $S$. Because $\diam(P_{n_k}) \to \diam(P_\infty)$ and $v(P_{n_k}) \to v(P_\infty) > 0$ there is an upper bound $\ell_{S}(\gamma_{n_k}) \leq L$ for every $k \in \NN$.
		
		The set $\{ \gamma \in \pi_1(\Sigma) \, | \, \ell_S(\pi_1(\Sigma)) \leq L \}$ is finite such that we may assume that $\gamma_{n_k} = \gamma' \in \pi_1(\Sigma)$ for every $k \in \NN$, up to a subsequence. Then by algebraic convergence
		$$ g = \lim\limits_{k \to \infty} \rho_{n_k}(\gamma_{n_k}) = \lim\limits_{k \to \infty} \rho_{n_k}(\gamma') = \rho_\infty(\gamma') \in \im \rho_\infty. $$
		
	\end{enumerate}
	Therefore, the map $\im \colon \Rep^*(\Sigma) \longrightarrow \Subdtf(G)$ is continuous.
	
	Next, let us see that $\im$ is locally injective. Let $\rho \in \Rep^*(\Sigma)$. By Lemma \ref{lem:nbhddiscrete} there is an open neighborhood $U \subseteq G$ of the identity and an open neighborhood $\mc{V} \subseteq \Subd(G)$ of $\Gamma := \im \rho$ such that $\Gamma' \cap U = \{1\}$ for every $\Gamma' \in \mc{V}$. Consider the open preimage $\mc{U}':=\im^{-1}(\mc{V})\subseteq \Rep^*(\Sigma)$ and let $V \subseteq U$ be an open neighborhood of the identity such that $V^{-1} V \subseteq U$. Let $\mc{U} \subseteq \mc{U}'$ be a smaller open neighborhood consisting of all $\rho' \in \mc{U}$ such that $\rho'(s) \in \rho(s) V$ for every $s \in S$.
	
	Let $\rho_1,\rho_2 \in \mc{U}$ with $\im \rho_1 = \im \rho_2$. Then for every $s \in S$ there is $v_i=v_i(s) \in V$ such that $\rho_i(s) = \rho(s) v_i$, $i=1,2$. Hence $\rho_2(s)^{-1} \rho_1(s) = v_2^{-1} v_1 \in V^{-1} V \subseteq U$. Since $\im \rho_1 = \im \rho_2 \in \mc{V}$ we have that $\rho_2(s)^{-1} \rho_1(s) \in \im \rho_1 \cap U =\{1\}$ for every $s \in S$. Therefore $\rho_1 = \rho_2$. This shows that $\im|_{\mc{U}}$ is injective.
	
	Finally, we want to prove that $\im|_{\im(\mc{U})}^{-1} \colon \im(\mc{U}) \longrightarrow \mc{U}$ is continuous. Let $\Gamma_n = \im \rho_n \in \im(\mc{U})$ converge to $\Gamma = \im \rho \in \im(\mc{U})$ as $n \to \infty$. 
	Note that $\Gamma \backslash \HH^2$ is a hyperbolic surface of finite type such that there is a compact connected subset $C \subseteq \Gamma \backslash \HH^2$ onto which $\Gamma \backslash \HH^2$ deformation retracts. For example, we can take its convex core $C(\Gamma)$ and cut off the cusps.
	Let $o \in \tilde{C} = \pi_\Gamma^{-1}(C)$. By Proposition \ref{prop:ChabGeomCharact} we find sequences $\epsilon_n \to 0, r_n \to \infty$ as $n\to \infty$, open neighborhoods $\Omega_n, \Omega'_n$ of $\ol{B}_o(r_n)$ and diffeomorphisms $f_n \colon \Omega_n \longrightarrow \Omega'_n$ such that 
	\begin{enumerate}
		\item $f_n(o) = o$,
		\item  $\pi_{\Gamma_n}( f_n(x)) = \pi_{\Gamma_n}(f_{n}(y)) \iff \pi_\Gamma(x) = \pi_\Gamma(y)$, for every $x,y \in \Omega_n$, and
		\item $D_{\ol{B}_o(r_n)}(f_{n},\id) < \epsilon_n$.
	\end{enumerate}
	In the following we will abbreviate $\pi_n = \pi_{\Gamma_n}, \pi = \pi_\Gamma$. Let us denote by 
	$F_n \colon \pi(\Omega_n) \longrightarrow \pi_n(\Omega'_n), \pi(x) \mapsto f_n(\pi(x))$ the induced diffeomorphisms. Since the diameter of $C$ is finite $\pi(\ol{B}_o(r_n))$ contains $C$ for large $n$. Recall that $\Gamma \backslash \HH^2$ is homotopy equivalent to $C$ such that we may identify the fundamental group of $C$ with $\Gamma$. In this way $F_n \colon C \longrightarrow \Gamma_n \backslash \HH^2$ induce homomorphisms $\sigma_n \colon \Gamma \longrightarrow \Gamma_n$ at the level of fundamental groups. Let us consider the lifts $\tilde{F}_n \colon \tilde{C} \longrightarrow \HH^2$ of $F_n \colon C \longrightarrow \Gamma_n \backslash \HH^2$ such that $\tilde{F}_n(o) = o$. These are equivariant meaning that
	\[ \tilde{F}_n(\gamma x) = \sigma_n(\gamma) \tilde{F}_n(x)\]
	for every $\gamma \in \Gamma$, $x \in \tilde{C}$. Moreover, $\tilde{F}_n$ coincides with $f_n$ on $\ol{B}_o(r_n) \cap \tilde{C}$ by uniqueness of lifts.
	
	We want to show that $\sigma_n(\rho(s)) \to \rho(s)$ as $n \to \infty$ for every $s \in S$. This is equivalent to convergence of the differentials $D_o \sigma_n(\rho(s)) \to D_o\rho(s)$ as $n\to \infty$. There is $R>0$ such that $\rho(s)o \in B_o(R)$ for every $s \in S$, and $\ol{B}_o(R) \subseteq \ol{B}_o(r_n)$ for large $n$. By equivariance 
	\begin{align*}
	D_o \sigma_n(\rho(s)) = D_o (f_n \circ \rho(s) \circ f^{-1}_n) = D_{\rho(s)o} f_n \circ D_o\rho(s) \circ D_o f_n^{-1}.
	\end{align*}
	Since $f_n$ converges to $\id$ on $\ol{B}_o(R)$ in the $C^\infty$-distance it follows that 
	\[ D_o \sigma_n(\rho(s)) \to D_o \rho(s) \]
	as $n \to \infty$. This implies that $\sigma_n(\rho(s)) \to \rho(s)$ as $n \to \infty$.
	
	Next, let us see that $\sigma_n \colon \Gamma \longrightarrow \Gamma_n$ is injective for large $n$. To this end it suffices to prove that $F_n \colon C \longrightarrow \Gamma_n \backslash \HH^2$ embeds $C$ as a subsurface with homotopically non-trivial peripheral curves. Indeed, such subsurface embeddings are $\pi_1$-injective (cf.\ restriction maps to subsurfaces in subsection \ref{subsect:AugTeich}). Let $\gamma_1 = \rho(c_1), \ldots, \gamma_p = \rho(c_p) \in \Gamma \cong \pi_1(\Gamma \backslash \HH^2)$ correspond to the peripheral elements $\{c_1, \ldots, c_p\} \subset \pi_1(\Sigma)$. Then $\sigma_n(\gamma_i) \to \rho(c_i) \neq 1$ as $n \to \infty$ such that $(F_n)_*(\gamma_i) = \sigma_n(\gamma_i)$ are homotopically non-trivial for large $n$, $i=1, \ldots, p$.
	
	More is true by the above. The diffeomorphism $F_n$ embeds $C \cong \Sigma$ as a subsurface with homotopically non-trivial peripheral curves in $\Gamma_n \backslash \HH^2 \cong \Sigma$. This is only possible if $F_n$ sends peripheral curves to peripheral curves. Therefore, $\Gamma_n \backslash \HH^2$ deformation retracts to $F_n(\interior(C))$. Because $\Gamma \backslash \HH^2$ deformation retracts to $\interior(C)$ we may extend $F_n$ to a diffeomorphism $\Gamma \backslash \HH^2 \longrightarrow \Gamma_n \backslash \HH^2$ inducing $\sigma_n \colon \Gamma \longrightarrow \Gamma_n$.
	In particular, $\sigma_n$ is an isomorphism induced by an orientation preserving diffeomorphism. 
	
	It follows that $\sigma_n \circ \rho \in \Rep^*(\Sigma)$.
	Moreover, $\sigma_n \circ \rho \to \rho$ as $n \to \infty$ such that $\sigma_n \circ \rho \in \mc{U}$ for large $n$. But also $\rho_n \in \mc{U}$ and $\im \rho_n = \Gamma_n = \im (\sigma_n \circ \rho)$. By injectivity of $\im \colon \mc{U} \longrightarrow \im(\mc{U})$ we have that $\rho_n = \sigma_n \circ \rho$ for large $n$, and $\rho_n = \sigma_n \circ \rho \to \rho$ as $n \to \infty$. Thus $\im|_{\mc{U}}^{-1}$ is indeed continuous.
	
	That $\im(\Rep^*(\Sigma)) = \mc{D}(\Sigma)$ and $\im(\Rep(\Sigma)) = \mc{L}(\Sigma)$ follows at once from the definition of $\Rep^*(\Sigma)$ and $\Rep(\Sigma)$ respectively.
	
\end{proof}

Proposition \ref{prop:geometric_equals_algebraic} allows us to identify moduli space $\Moduli(\Sigma)$ with the space $G \backslash \Lattices(\Sigma)$ of $G$-conjugacy classes of lattices.

\begin{prop}\label{prop:ModuliAsLattices}
	The space $\Lattices(\Sigma)$ is invariant under the conjugation action of $G$ and we may identify its quotient with moduli space via the following homeomorphism
	\begin{align*}
	\psi \colon \Moduli(\Sigma) &\longrightarrow G \backslash \Lattices(\Sigma), \\
	[[ \rho ]] &\longmapsto [ \im \rho].
	\end{align*}
\end{prop}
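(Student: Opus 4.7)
The plan is to lift the claimed bijection to the level of representations and exploit Proposition \ref{prop:geometric_equals_algebraic}, which says that $\im \colon \Rep(\Sigma) \to \Lattices(\Sigma)$ is a continuous, surjective local homeomorphism. First I would check $G$-invariance: if $\Gamma \in \Lattices(\Sigma)$ and $g \in G$, then $g\Gamma g^{-1}$ is again a torsion-free lattice, and the isometry of $\HH^2$ given by $g$ descends to a homeomorphism $\Gamma \backslash \HH^2 \to g\Gamma g^{-1} \backslash \HH^2$, so $g\Gamma g^{-1} \in \Lattices(\Sigma)$. Well-definedness of $\psi$ is immediate: if $[[\rho_1]] = [[\rho_2]] \in \Moduli(\Sigma)$, then $\rho_2 = h \cdot (\rho_1 \circ \alpha) \cdot h^{-1}$ for some $h \in G$ and $\alpha \in \Aut^*(\pi_1(\Sigma))$, and since $\alpha$ is an automorphism, $\im \rho_2 = h \cdot \im \rho_1 \cdot h^{-1}$, so $[\im \rho_2] = [\im \rho_1]$ in $G\backslash \Lattices(\Sigma)$.

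Surjectivity of $\psi$ is just the identification $\im(\Rep(\Sigma)) = \Lattices(\Sigma)$ from Proposition \ref{prop:geometric_equals_algebraic}. For injectivity, suppose $\psi([[\rho_1]]) = \psi([[\rho_2]])$, so $g \cdot \im\rho_1 \cdot g^{-1} = \im\rho_2$ for some $g \in G$. Replacing $\rho_1$ by the conjugate $g\rho_1 g^{-1}$ (which does not change its class in $\Moduli$), we may assume $\im \rho_1 = \im \rho_2 =: \Gamma$. Then $\alpha := \rho_1^{-1} \circ \rho_2 \in \Aut(\pi_1(\Sigma))$, and the main point is to show $\alpha \in \Aut^*(\pi_1(\Sigma))$. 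Writing each $\rho_i$ as the holonomy of an orientation preserving homeomorphism $f_i \colon \Sigma \to \Gamma \backslash \HH^2$, the composition $f_1^{-1} \circ f_2 \colon \Sigma \to \Sigma$ is an orientation preserving homeomorphism whose induced map on $\pi_1$ agrees (up to inner automorphism, which is irrelevant since we work modulo $\Inn$) with $\alpha$. Hence $\alpha$ is admissible, so $[\rho_1]$ and $[\rho_2]$ lie in the same $\MCG(\Sigma)$-orbit in $\Teich(\Sigma)$, i.e.\ $[[\rho_1]] = [[\rho_2]]$.

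For continuity and openness, consider the commutative square
\begin{center}
\begin{tikzcd}[row sep=large, column sep=large]
\Rep(\Sigma) \arrow[r,"\im"] \arrow[d,"q_\Moduli"'] & \Lattices(\Sigma) \arrow[d,"q"] \\
\Moduli(\Sigma) \arrow[r,"\psi"] & G \backslash \Lattices(\Sigma)
\end{tikzcd}
\end{center}
where the vertical maps are the quotient projections. Continuity of $\psi$ follows because $q \circ \im$ is continuous and $q_\Moduli$ is a quotient map. For openness, let $U \subseteq \Moduli(\Sigma)$ be open, and let $\tilde U = q_\Moduli^{-1}(U) \subseteq \Rep(\Sigma)$, which is open. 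Since $\im$ is a local homeomorphism, $\im(\tilde U)$ is open in $\Lattices(\Sigma)$, and since $q$ is an open quotient map, $q(\im(\tilde U))$ is open in $G \backslash \Lattices(\Sigma)$. A direct check using the definition of $\psi$ shows $\psi(U) = q(\im(\tilde U))$, so $\psi$ is open. Combined with bijectivity, $\psi$ is a homeomorphism.

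The main obstacle is the injectivity argument: producing an element of $\Aut^*(\pi_1(\Sigma))$ rather than merely an abstract automorphism of $\pi_1(\Sigma)$ relating two representations with the same image. The resolution is geometric — one must use that the admissibility of each $\rho_i$ provides an orientation preserving homeomorphism $f_i \colon \Sigma \to \Gamma \backslash \HH^2$, so that the topological composition $f_1^{-1} \circ f_2$ realizes the change of marking and certifies that $\alpha$ is induced by a mapping class.
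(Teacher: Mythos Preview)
Your proof is correct and follows essentially the same approach as the paper. The only organizational difference is that the paper first establishes the intermediate homeomorphism $\Rep(\Sigma)/\Aut^*(\pi_1(\Sigma)) \cong \Lattices(\Sigma)$ and then passes to the $G$-quotient on both sides using equivariance, whereas you work directly with the double quotient; the key ingredients---injectivity via the composition $f_1^{-1}\circ f_2$ realizing the change of marking, and the topological argument via the local homeomorphism property of $\im$ from Proposition~\ref{prop:geometric_equals_algebraic}---are identical.
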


\begin{proof}
	Let $\Gamma \in \Lattices(\Sigma)$ and $g \in G$. Then $\Gamma' \coloneqq g \Gamma g^{-1}$ is a torsion-free lattice too. Moreover, the element $g \in G \cong \Isom_+(\HH^2)$ induces an orientation preserving isometry 
	\begin{align*}
	g \colon \Gamma \backslash \HH^2 &\longrightarrow \Gamma' \backslash \HH^2,\\
	\Gamma x &\longmapsto \Gamma' g x,
	\end{align*}
	whence $\Gamma' = g \Gamma g^{-1} \in \Lattices(\Sigma)$.
	
	Let us consider the right-action $\Aut^*(\pi_1(\Sigma)) \acts \Rep(\Sigma)$. We claim that the map 
	\[\im \colon \Rep(\Sigma) \longrightarrow \Lattices(\Sigma)\] 
	induces a homeomorphism:
	\begin{center}
		\begin{tikzcd}
			\Rep(\Sigma) \arrow[d] \arrow[dr,"\im",two heads] & \\
			\Rep(\Sigma) / \Aut^*(\pi_1(\Sigma)) \arrow[r,"\varphi"',dashed] & \Lattices(\Sigma)
		\end{tikzcd}
	\end{center}
	Clearly, if $\rho_1 = \rho_2 \circ \alpha$ for $\rho_1, \rho_2 \in \Rep(\Sigma)$ and $\alpha \in \Aut^*(\pi_1(\Sigma))$ then $\im \rho_1 = \im \rho_2$. On the other hand, if $\Gamma = \im \rho_1 = \im \rho_2$ for $\rho_1, \rho_2 \in \Rep(\Sigma)$ and $f_1, f_2 \colon \Sigma \longrightarrow \Gamma \backslash \HH^2$ are orientation preserving homeomorphisms inducing $\rho_1, \rho_2$ respectively, then  $\rho_2^{-1} \circ \rho_1$ is induced by $f_2^{-1} \circ f_1$ such that $ \alpha \coloneqq \rho_2^{-1} \circ \rho_1 \in \Aut^*(\pi_1(\Sigma))$, and $\rho_1 = \rho_2 \circ \alpha$. Hence, $\varphi$ is a bijection.
	
	By definition of the quotient topology, $\varphi$ is continuous. Finally, $\im$ is a local homeomorphism such that $\varphi^{-1}$ is continuous too. This shows that $\varphi$ is a homeomorphism.
	
	Observe that $\varphi$ is equivariant with respect to the conjugation action of $G$ both on $\Rep(\Sigma)/\Aut^*(\pi_1(\Sigma))$ and on $\Lattices(\Sigma)$. Therefore, taking the quotient by the conjugation actions yields a homeomorphism
	\begin{center}
		\begin{tikzcd}
			\Rep(\Sigma) / \Aut^*(\pi_1(\Sigma)) \arrow[r,"\varphi","\cong"'] \arrow[d]
			& \Lattices(\Sigma) \arrow[d] \\
			\Teich(\Sigma)/\Out^*(\pi_1(\Sigma)) \arrow[r,"\psi","\cong"',dashed] & G \backslash \Lattices(\Sigma)
		\end{tikzcd}
	\end{center}
	given by $\psi([[\rho]]) = [\im \rho]$ for every $[[\rho]] \in \Moduli(\Sigma) = \Teich(\Sigma)/\Out^*(\Sigma)$.
\end{proof}

We conclude this section with the following Lemma, that restricts the kind of closed subgroups that arise in the closure of the $G$-orbit of a lattice $\Gamma \in \Lattices(\Sigma)$ in $\Sub(G)$.

\begin{lemma}\label{lem:LimitsOfConjugates}
	Let $\Gamma \in \Lattices(\Sigma)$ and let $(g_n)_{n \in \NN} \subset G$ be a sequence of elements such that
	\[ g_n^{-1} \Gamma g_n \to H \qquad (n\to \infty) \]
	converges to $H \in \Sub(G)$. Then, either
	\begin{enumerate}
		\item $H$ is abelian, or
		\item $H$ is conjugate to $\Gamma$.
	\end{enumerate}
\end{lemma}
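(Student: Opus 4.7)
The plan is to split into two cases based on the geometric behavior of the basepoint $\Gamma g_n o \in \Gamma \backslash \HH^2$, where $o \in \HH^2$ is a fixed auxiliary point. Set $r_n \coloneqq \InjRad_{\Gamma \backslash \HH^2}(\Gamma g_n o)$ and, after passing to a subsequence, suppose that either $r_n$ is bounded below by some $r_0 > 0$, or $r_n \to 0$.

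In the thick case $r_n \geq r_0 > 0$, the $r_0$-thick part of the finite area surface $\Gamma \backslash \HH^2$ is compact, so the points $\Gamma g_n o$ lie in a compact subset. I would pick $\gamma_n \in \Gamma$ so that $\gamma_n g_n o$ remains in a fixed compact subset of $\HH^2$; compactness of $\stab_G(o) \cong \SO(2)$ then allows passing to a further subsequence with $\gamma_n g_n \to g_\infty \in G$. Continuity of the conjugation action yields
\[ g_n^{-1} \Gamma g_n = (\gamma_n g_n)^{-1} \Gamma (\gamma_n g_n) \longrightarrow g_\infty^{-1} \Gamma g_\infty \]
in the Chabauty topology, and uniqueness of limits forces $H = g_\infty^{-1} \Gamma g_\infty$, which is conjugate to $\Gamma$.

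In the thin case $r_n \to 0$, let $\eta_n \in \Gamma_n \coloneqq g_n^{-1} \Gamma g_n$ minimize $d(o, \cdot o)$ over non-trivial elements, so $\eta_n \to e$. The element $\eta_n$ cannot be hyperbolic for infinitely many $n$, since its conjugate $g_n \eta_n g_n^{-1} \in \Gamma$ would then be a hyperbolic element of $\Gamma$ with translation length tending to zero, contradicting the positivity of the systole of $\Gamma$. Hence $\eta_n$ is parabolic for large $n$; since $d(o, \pi^k o)$ is strictly increasing in $|k|$ for any parabolic $\pi$, minimality further forces $\eta_n$ to be primitive in $\Gamma_n$. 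Writing $\eta_n = \exp(X_n)$ with $X_n \in \mathfrak{g}$ a non-zero nilpotent tending to $0$, and passing to a subsequence so that $X_n/\norm{X_n} \to X_\infty$, I define $P_\infty \coloneqq \exp(\RR X_\infty)$, a one-parameter parabolic subgroup of $G$. For each $t \in \RR$, taking $k_n \in \ZZ$ with $k_n \norm{X_n} \to t$ gives $\eta_n^{k_n} \to \exp(t X_\infty)$, and condition (C2) of Proposition \ref{prop:charact_con_Chab} yields $P_\infty \subseteq H$.

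The main obstacle is the reverse inclusion $H \subseteq P_\infty$. Given $h \in H$, condition (C1) supplies $h_n \in \Gamma_n$ with $h_n \to h$, and then $h_n \eta_n h_n^{-1} \to h \cdot e \cdot h^{-1} = e$. Invoking the Margulis lemma for $\HH^2$, there is a uniform constant $\epsilon_0 > 0$ such that the elements of any torsion-free discrete subgroup of $G$ moving $o$ less than $\epsilon_0$ generate a cyclic subgroup; by primitivity this cyclic subgroup of $\Gamma_n$ equals $\langle \eta_n \rangle$. For large $n$ the element $h_n \eta_n h_n^{-1}$ satisfies this bound, and being primitive itself it must equal $\eta_n^{\pm 1}$. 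A direct matrix computation in coordinates where the parabolic fixed point of $\eta_n$ lies at $\infty$ shows that conjugation of $\eta_n$ by any stabilizing Borel element $\begin{pmatrix} a & b \\ 0 & a^{-1} \end{pmatrix}$ scales its translation parameter by a positive factor $a^2$, ruling out the inverse case in $\PSL(2, \RR)$. Hence $h_n \in Z_G(\eta_n) = P_n$; since $P_n \to P_\infty$ in the Chabauty topology, (C2) gives $h \in P_\infty$, and $H = P_\infty$ is abelian.
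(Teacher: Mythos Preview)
Your argument is correct and the thick/thin dichotomy together with the treatment of the thick case match the paper verbatim. The thin case, however, is handled quite differently. The paper normalises by elements of $\Gamma$ so that the basepoints $\hat g_k o$ lie in a fixed horoball and converge to its centre $\xi \in \partial\HH^2$; any $h \in H$ is then approximated by conjugates of elements $\gamma_k \in \Gamma$ that move $\hat g_k o$ a bounded amount, hence eventually preserve the horoball and lie in the cyclic cusp group $\stab_\xi(\Gamma)$, forcing $H$ to be abelian. You instead stay in $\Gamma_n = g_n^{-1}\Gamma g_n$, extract the primitive parabolic minimiser $\eta_n$, and use the Margulis lemma together with the fact that a parabolic in $\PSL(2,\RR)$ is never conjugate to its inverse to force each approximant $h_n$ into the centraliser $P_n = Z_G(\eta_n)$; Chabauty convergence $P_n \to P_\infty$ then pins down $H = P_\infty$ exactly. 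Your route is a little longer and leans on the Margulis lemma and an $\exp$/Lie-algebra computation, whereas the paper's horoball argument is shorter and entirely geometric; on the other hand you obtain the sharper conclusion that $H$ is a specific one-parameter parabolic subgroup, not merely abelian. Two small points worth making explicit in a write-up: the implication $d(o,\eta_n o)\to 0 \Rightarrow \eta_n \to e$ uses that $\eta_n$, being hyperbolic or parabolic, has $|\tr|\ge 2$, so any subsequential limit in $K=\stab_G(o)$ must be the identity; and your ``stabilising Borel'' step tacitly uses that $h_n\eta_n h_n^{-1}=\eta_n^{\pm1}$ already forces $h_n$ to fix the parabolic fixed point of $\eta_n$.
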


\begin{proof}
	Let us assume that $H$ is not abelian. Further, let $\pi \colon \HH^2 \longrightarrow \Gamma \backslash \HH^2$ denote the quotient map and $o \in \HH^2$. There are two cases to consider:
	\begin{enumerate}
		\item[a)] The sequence $\pi(g_n o)$ is contained in a compact set $K \subseteq \Gamma \backslash \HH^2$, or
		\item[b)] The sequence $\pi(g_n o)$ goes to infinity in $\Gamma \backslash \HH^2$.
	\end{enumerate}
	In case a) we may find a compact set $K' \subseteq \HH^2$ such that $\pi(K') = K$ and therefore elements $\gamma_n \in \Gamma$ such that $\gamma_n g_no \in K'$. Then there is a convergent subsequence $\gamma_{n_k} g_{n_k} \to g$ as $k \to \infty$ such that
	\[ H = \lim\limits_{n \to \infty} g_n^{-1} \Gamma g_n = \lim\limits_{k \to \infty} g_{n_k}^{-1} \gamma_{n_k}^{-1} \Gamma \gamma_{n_k} g_{n_k} = g^{-1} \Gamma g. \]
	
	In case b) there is a subsequence such that $\pi(g_{n_k}o) \in C$ is contained in a cusp region $C$ of the thin-part of $\Gamma \backslash \HH^2$. Let $H \subseteq \pi^{-1}(C)$ be a horoball in the preimage of $C$ centered at $\xi \in \partial \HH^2$, and let $\hat{\gamma}_{n_k} \in \Gamma$ be  such that $\hat{\gamma}_{n_k} g_{n_k} o \in H$ for every $k \in \NN$, and $\xi = \lim\limits_{k \to \infty} \hat{\gamma}_{n_k} g_{n_k} o$. If we set $\hat{g}_k := \hat{\gamma}_{n_k} g_{n_k}$, then $\xi = \lim\limits_{k \to \infty} \hat{g}_k o $, and
	\[ H = \lim_{k \to \infty} g_{n_k}^{-1} \Gamma g_{n_k} = \lim_{k \to \infty} (\hat{g}_{k})^{-1} (\hat{\gamma}_{n_k})^{-1} \Gamma \hat{\gamma}_{n_k} \hat{g}_{k} = \lim_{k \to \infty} (\hat{g}_{k})^{-1} \Gamma \hat{g}_{k} . \]
	
	Let $h \in H$ and let $\gamma_k \in \Gamma$ such that $(\hat{g}_k)^{-1} \gamma_k \hat{g}_k \to h$ as $k \to \infty$. Note that 
	\[ d(h o, o) = \lim_{k \to \infty} d((\hat{g}_{k})^{-1} \gamma_{k} \hat{g}_{k}o,o) 
	=\lim_{k \to \infty} d(\gamma_{k} \hat{g}_{k}o,\hat{g}_{k} o). \]
	Hence there is $D>0$, such that
	\[ d(\gamma_{k} \hat{g}_{k}o,\hat{g}_{k} o) < D\]
	for all $k \in \NN$.
	Thus $\gamma_{k} H \cap H \neq \emptyset$ and $\gamma_{k} \in P=\stab_\xi(\Gamma)$ for large $k$. Because $P$ is abelian, this implies that $H$ is abelian.
\end{proof}


\section{Invariant Random Subgroups} \label{sect:irs}

\subsection{Embedding Moduli Space} \label{subsect:embeddingmodulispace}

In subsection \ref{subsect:aug_moduli} we constructed augmented moduli space, a compactification of moduli space $\Moduli(\Sigma)$. We will now describe another compactification using invariant random subgroups (IRS). This construction is due to Gelander \cite{gelanderIRS15}.

\begin{defn}
	Let $G$ be a locally compact Hausdorff group.
	An \emph{invariant random subgroup of $G$} is a Borel probability measure on $\Sub(G)$ that is $G$-invariant. We denote the space of all invariant random subgroups of $G$ by 
	\[\IRS(G) = \Prob(\Sub(G))^G.\]
\end{defn}

\begin{lemma}
	The space of invariant random subgroups $\IRS(G)$ of a locally compact Hausdorff group $G$ is compact.
\end{lemma}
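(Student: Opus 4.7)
The plan is to exhibit $\IRS(G)$ as a closed subspace of the compact space $\Prob(\Sub(G))$ of Borel probability measures on $\Sub(G)$, equipped with the weak-$*$ topology. First I would recall that for any locally compact Hausdorff group $G$, the space $\Sub(G)$ with the Chabauty topology is compact; this is the analogue of the lemma stated earlier for $G = \PSL(2,\RR)$, and a proof in full generality can be found in \cite{benedettipetronio}. Once we have that $\Sub(G)$ is compact Hausdorff, the space $\Prob(\Sub(G))$ is compact in the weak-$*$ topology by the Banach--Alaoglu theorem (applied to the dual of $C(\Sub(G))$, intersected with the closed convex set of positive functionals of norm $1$).

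Next I would examine the $G$-invariance condition. The conjugation action $G \times \Sub(G) \to \Sub(G)$ is continuous, so for each fixed $g \in G$ the map $c_g \colon \Sub(G) \to \Sub(G)$, $H \mapsto g H g^{-1}$, is a homeomorphism. Consequently the pushforward $(c_g)_* \colon \Prob(\Sub(G)) \to \Prob(\Sub(G))$ is continuous with respect to the weak-$*$ topology, since for every $f \in C(\Sub(G))$ the map $\mu \mapsto \int f \, d(c_g)_*\mu = \int (f \circ c_g) \, d\mu$ is a defining continuous functional.

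Now I would write
\[
    \IRS(G) \; = \; \bigcap_{g \in G} \bigl\{ \mu \in \Prob(\Sub(G)) \st (c_g)_*\mu = \mu \bigr\}.
\]
Each set in the intersection is the equalizer of the two continuous maps $\mu \mapsto (c_g)_*\mu$ and $\mu \mapsto \mu$ into the Hausdorff space $\Prob(\Sub(G))$, hence closed. An intersection of closed sets is closed, so $\IRS(G)$ is a closed subset of the compact space $\Prob(\Sub(G))$, and therefore compact.

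There is no real obstacle here; the only subtlety is the Chabauty-compactness of $\Sub(G)$ in the full locally compact Hausdorff setting, which I would cite rather than reprove. The rest is a standard soft-topology argument: compactness of $\Prob$ on a compact Hausdorff space plus continuity of pushforward under a continuous action yields closedness of the invariant locus.
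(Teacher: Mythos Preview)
Your proof is correct and follows exactly the same approach as the paper: compactness of $\Sub(G)$, Banach--Alaoglu for $\Prob(\Sub(G))$, and the description of $\IRS(G)$ as the intersection $\bigcap_{g \in G}\{\mu : (c_g)_*\mu = \mu\}$ of closed sets. You simply spell out more explicitly why each set in the intersection is closed (via the equalizer argument), which the paper leaves implicit.
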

\begin{proof}
	Since $\Sub(G)$ is compact, so is $\Prob(\Sub(G))$ by Banach-Alaoglu. The space of invariant random subgroups is a closed subspace
	\[ \Prob(\Sub(G))^G = \bigcap_{g \in G} \{ \mu \in \Prob(\Sub(G)) \, | \, g_* \mu = \mu \}. \]
\end{proof}

In order to obtain a compactification of $\Moduli(\Sigma)$ we will embed it in $\IRS(G)$ for $G= \PSL(2,\RR)$. We will use Proposition \ref{prop:ModuliAsLattices} identifying $\Moduli(\Sigma) \cong G \backslash \Lattices(\Sigma)$ to do so.

Let $\Gamma \in \Lattices(\Sigma)$ and let $\nu_\Gamma$ denote the (unique) right-invariant Borel probability measure on $\Gamma \backslash G$. Further, consider the map
\begin{align*}
\varphi_\Gamma \colon \Gamma \backslash G &\longrightarrow \Sub(G),\\
\Gamma g &\longmapsto g^{-1} \Gamma g.
\end{align*}
Using $\varphi_\Gamma$ we push $\nu_\Gamma$ forward to obtain a probability measure $\mu_\Gamma = (\phi_\Gamma)_* \nu_\Gamma$ on $\Sub(G)$.

\begin{lemma} \label{lem:IRSconjuginv}
	The measure $\mu_\Gamma$ depends only on the conjugacy class of $[\Gamma] \in G \backslash \Lattices(\Sigma)$ and is an invariant random subgroup in $\IRS(G)$.
\end{lemma}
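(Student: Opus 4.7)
The plan is to verify the two assertions separately, each by identifying the right transport map between $\Gamma\backslash G$ and $\Sub(G)$ and then invoking uniqueness/invariance of $\nu_\Gamma$. The key observation driving both parts is that the conjugation action of $G$ on $\Sub(G)$ pulls back, along $\varphi_\Gamma$, to the right-translation action of $G$ on $\Gamma\backslash G$ introduced in the introduction.

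For the $G$-invariance, fix $g \in G$ and compute, for any $h \in G$,
\[ g \cdot \varphi_\Gamma(\Gamma h) \cdot g^{-1} \;=\; g h^{-1} \Gamma h g^{-1} \;=\; (h g^{-1})^{-1} \Gamma (h g^{-1}) \;=\; \varphi_\Gamma(\Gamma h g^{-1}) \;=\; \varphi_\Gamma(g \ast \Gamma h), \]
using the $G$-action $g\ast \Gamma h = \Gamma(hg^{-1})$. Hence $\varphi_\Gamma$ is $G$-equivariant. Since $\nu_\Gamma$ is $G$-invariant on $\Gamma\backslash G$, its pushforward $\mu_\Gamma = (\varphi_\Gamma)_*\nu_\Gamma$ is invariant under conjugation, so $\mu_\Gamma \in \IRS(G)$. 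For this to make sense as an IRS one also needs $\varphi_\Gamma$ to be Borel, which is clear since it is continuous.

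For the dependence only on the conjugacy class, suppose $\Gamma' = h\Gamma h^{-1}$ with $h \in G$. I would exhibit the homeomorphism
\[ \psi \colon \Gamma\backslash G \longrightarrow \Gamma'\backslash G, \qquad \Gamma g \longmapsto \Gamma' h g, \]
which is well defined and $G$-equivariant with respect to right translation. By uniqueness of the $G$-invariant Borel probability measure on the quotient, $\psi_*\nu_\Gamma = \nu_{\Gamma'}$. A direct computation gives
\[ \varphi_{\Gamma'}(\psi(\Gamma g)) \;=\; (hg)^{-1}\Gamma'(hg) \;=\; g^{-1} h^{-1}(h\Gamma h^{-1}) h g \;=\; g^{-1}\Gamma g \;=\; \varphi_\Gamma(\Gamma g), \]
so $\varphi_{\Gamma'}\circ\psi = \varphi_\Gamma$. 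Pushing forward yields $\mu_\Gamma = (\varphi_\Gamma)_*\nu_\Gamma = (\varphi_{\Gamma'})_*\psi_*\nu_\Gamma = (\varphi_{\Gamma'})_*\nu_{\Gamma'} = \mu_{\Gamma'}$, which is exactly what we want.

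Neither step presents a real obstacle; the only thing one must be careful about is getting the direction of conjugation consistent with the right action $g\ast \Gamma h = \Gamma(hg^{-1})$. Once the equivariance of $\varphi_\Gamma$ is written down correctly, both statements are one-line consequences of the uniqueness of $\nu_\Gamma$ and functoriality of pushforward.
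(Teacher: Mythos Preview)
Your proof is correct and follows essentially the same approach as the paper: you use the same transport map $\psi(\Gamma g)=\Gamma' h g$ together with uniqueness of the invariant probability measure to get conjugacy-invariance, and the $G$-equivariance of $\varphi_\Gamma$ to get that $\mu_\Gamma$ is an IRS. The only differences are cosmetic --- you reverse the order of the two parts and write out the equivariance computations explicitly where the paper leaves them to the reader.
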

\begin{proof}
	Let $\Gamma' = h \Gamma h^{-1}$ be a conjugate, $h \in G$. There is a $G$-equivariant homeomorphism
	\[ \hat{h} \colon \Gamma \backslash G \longrightarrow \Gamma' \backslash G, \quad\Gamma g \longmapsto  \Gamma' h g. \]
	One can check that the following diagram commutes:
	
	\begin{center}
		\begin{tikzcd}
			\Gamma \backslash G \arrow[drr,"\phi_\Gamma"] \arrow[dd,"\hat{h}"]& & \\
			& & \Sub(G) \\
			\Gamma' \backslash G \arrow[urr,"\phi_{\Gamma'}"] & & 
		\end{tikzcd}
	\end{center}
	Moreover, $\hat{h}$ is $G$-equivariant such that $\hat{h}_* \nu_\Gamma$ is another $G$-invariant probability measure and by uniqueness $\hat{h}_* \nu_\Gamma = \nu_{\Gamma'}$. Thus $(\phi_\Gamma)_* \nu_\Gamma = \mu_\Gamma = (\phi_{\Gamma'})_* \nu_{\Gamma'}$ so that $\mu_\Gamma$ depends only on the conjugacy class of $\Gamma$.
	
	In order to show that $\mu_\Gamma$ is an invariant random subgroup, it suffices to observe that the map $\varphi_\Gamma \colon \Gamma \backslash G \to \Sub(G)$ is $G$-equivariant with respect to the right-translation action on $\Gamma\backslash G$ and the conjugation action on $\Sub(G)$. Invariance of $\mu_\Gamma$ then follows from invariance of $\nu_\Gamma$.
\end{proof}

We have thus proven that there is a well-defined map
\begin{align*}
\iota \colon G \backslash \Lattices(\Sigma) &\longrightarrow \IRS(G), \\
[\Gamma] &\longmapsto \mu_{\Gamma}.
\end{align*}

Let us start our investigation of this construction by observing the following two basic Lemmas.

\begin{lemma}
	Let $\Gamma \in \Lattices(\Sigma)$.
	Then the support of $\mu_\Gamma$ is the closure of the $G$-orbit of $\Gamma$ in $\Sub(G)$, $\supp(\mu_{\Gamma}) = \ol{G * \Gamma}$.
\end{lemma}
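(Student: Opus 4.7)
The plan is to compute the support of a pushforward measure directly, invoking a standard general principle and then identifying the image of $\phi_\Gamma$ with the conjugation orbit.

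First I would record the general fact that for any continuous map $f \colon X \longrightarrow Y$ between Hausdorff spaces and any Borel probability measure $\nu$ on $X$ of full support, the pushforward satisfies $\supp(f_* \nu) = \overline{f(X)}$. The proof is a one-liner in both directions: if $y \in \overline{f(X)}$ and $V \ni y$ is open, then $V \cap f(X) \neq \emptyset$, so $f^{-1}(V)$ is a nonempty open set in $X$, hence has positive $\nu$-mass, giving $(f_*\nu)(V) > 0$; conversely if $y \notin \overline{f(X)}$, there is an open $V \ni y$ disjoint from $f(X)$, so $f^{-1}(V) = \emptyset$ and $(f_*\nu)(V)=0$.

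Next I would verify the hypotheses in our setting. The measure $\nu_\Gamma$ on $\Gamma \backslash G$ is the (right-)invariant probability measure; by invariance under the transitive $G$-action it has full support on $\Gamma \backslash G$. The map $\phi_\Gamma \colon \Gamma \backslash G \longrightarrow \Sub(G)$, $\Gamma g \longmapsto g^{-1} \Gamma g$, is well-defined (if $\Gamma g = \Gamma g'$ then $g' = \gamma g$ for some $\gamma \in \Gamma$, and $(g')^{-1}\Gamma g' = g^{-1} \gamma^{-1} \Gamma \gamma g = g^{-1}\Gamma g$) and continuous, since it descends from the composition of $g \mapsto g^{-1}$ with the continuous conjugation action $G \times \Sub(G) \to \Sub(G)$ evaluated at $\Gamma$.

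Applying the general fact then gives
\[ \supp(\mu_\Gamma) \;=\; \overline{\phi_\Gamma(\Gamma \backslash G)} \;=\; \overline{\{\, g^{-1} \Gamma g \st g \in G \,\}}. \]
Finally, since inversion is a bijection of $G$, the set $\{g^{-1} \Gamma g \st g \in G\}$ equals the conjugation orbit $G * \Gamma$, and we conclude $\supp(\mu_\Gamma) = \overline{G * \Gamma}$. No step presents any real obstacle; the only conceptual content is the full-support observation together with the support-of-pushforward identity.
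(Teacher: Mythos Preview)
Your proof is correct and is essentially the same argument as the paper's: both directions rest on the fact that $\nu_\Gamma$ has full support, so a set in $\Sub(G)$ has positive $\mu_\Gamma$-mass exactly when its $\phi_\Gamma$-preimage is nonempty open. The only difference is packaging—you isolate the general principle $\supp(f_*\nu)=\overline{f(X)}$ for full-support $\nu$ and then apply it, whereas the paper runs the same computation inline in the specific context.
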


\begin{proof}
	Note that $\ol{G * \Gamma}=\ol{\phi_\Gamma(\Gamma \backslash G)}$ by definition.
	Further, recall that $H \in \Sub(G)$ is in the support $\supp(\mu_{\Gamma})$ if and only if every open neighborhood $\mc{U} \subseteq \Sub(G)$ of $H$ has positive mass $\mu_\Gamma(\mc{U}) > 0$. 
	
	Let $H \notin \supp(\mu_\Gamma)$. We want to show that $H \notin \ol{\phi_{\Gamma}(\Gamma \backslash G)}$, i.e.\ there is an open neighborhood $\mc{U}\subseteq \Sub(G)$ of $H$ such that $\mc{U} \cap \phi_\Gamma(\Gamma \backslash G) = \emptyset$. Because $H \notin \supp(\mu_{\Gamma})$ there is an open neighborhood $\mc{U} \subseteq \Sub(G)$ such that $\mu_\Gamma(\mc{U}) = 0$. Then $V = \phi_\Gamma^{-1}(\mc{U}) \subseteq \Gamma \backslash G$ is an open subset such that 
	\[\nu_\Gamma(V) = \nu_\Gamma ( \phi_\Gamma^{-1}(\mc{U})) = \mu_\Gamma(\mc{U}) = 0.\] 
	Because $\nu_\Gamma$ has full support on $\Gamma \backslash G$ the set $V$ must be empty. Therefore, $\mc{U} \cap \phi_\Gamma(\Gamma \backslash G) = \emptyset$.
	
	Vice versa, let $H \in \supp(\mu_\Gamma)$, and let $\mc{U} \subseteq \Sub(G)$ be an open neighborhood of $H$. Then
	\[ 0<\mu_\Gamma(\mc{U}) = \nu_\Gamma(\phi_\Gamma^{-1}(\mc{U})). \]
	Hence, $V=\phi_\Gamma^{-1}(\mc{U}) \neq \emptyset \subseteq \Gamma \backslash G $ is a non-empty open subset, and
	$\mc{U} \cap \phi_\Gamma(\Gamma \backslash G) \neq \emptyset$. Because $\mc{U}$ was an arbitrary open neighborhood of $H$, it follows that $H \in \ol{\phi_\Gamma(\Gamma \backslash G)} = \ol{G*\Gamma}$.
\end{proof}

\begin{lemma}\label{lem:LinearIndependenceIRS}
	Let $[\Gamma_1], \ldots, [\Gamma_m]$ be pairwise distinct conjugacy classes of lattices in $G$.
	Then the associated invariant random subgroups
	$\mu_{\Gamma_1}, \ldots, \mu_{\Gamma_m} \in \IRS(G) \subseteq C(\Sub(G))^*$ are linearly independent.
\end{lemma}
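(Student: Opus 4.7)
The plan is to exploit that the measures $\mu_{\Gamma_1}, \ldots, \mu_{\Gamma_m}$ have essentially disjoint supports, together with Lemma \ref{lem:LimitsOfConjugates} which constrains how conjugates of one lattice may limit to another subgroup. Specifically, I will show the stronger statement that for each $j$ the subgroup $\Gamma_j \in \Sub(G)$ lies in $\supp(\mu_{\Gamma_j})$ but \emph{not} in $\supp(\mu_{\Gamma_i})$ for any $i \neq j$, and then extract linear independence from this separation via a test function.

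Suppose for contradiction that $\sum_{i=1}^m c_i \mu_{\Gamma_i} = 0$ in $C(\Sub(G))^*$ with some $c_j \neq 0$. The preceding lemma identifies $\supp(\mu_{\Gamma_i}) = \overline{G * \Gamma_i}$; in particular $\Gamma_j \in \overline{G * \Gamma_j}$. I claim that $\Gamma_j \notin \overline{G * \Gamma_i}$ whenever $i \neq j$. Indeed, if $\Gamma_j = \lim_n g_n^{-1} \Gamma_i g_n$ for some sequence $(g_n) \subset G$, then Lemma \ref{lem:LimitsOfConjugates} (applied to $\Gamma_i$) forces $\Gamma_j$ to be either abelian or conjugate to $\Gamma_i$. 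Since $\Gamma_j$ is a lattice in $G = \PSL(2,\RR)$ it is nonabelian (a discrete abelian subgroup of $\PSL(2,\RR)$ is virtually cyclic and has infinite covolume), and conjugacy to $\Gamma_i$ contradicts the hypothesis that $[\Gamma_1], \ldots, [\Gamma_m]$ are pairwise distinct.

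Now $F \coloneqq \bigcup_{i \neq j} \overline{G * \Gamma_i}$ is a finite union of closed sets, hence closed, and $\Gamma_j \notin F$. Since $\Sub(G)$ is compact Hausdorff, by Urysohn's lemma I can choose a continuous function $f \colon \Sub(G) \longrightarrow [0,1]$ with $f(\Gamma_j) = 1$ and $f \equiv 0$ on an open neighborhood of $F$. Then $\int f \, d\mu_{\Gamma_i} = 0$ for every $i \neq j$, while $\int f \, d\mu_{\Gamma_j} > 0$ because $f$ is non-negative, continuous, and strictly positive at a point $\Gamma_j$ of $\supp(\mu_{\Gamma_j})$. Pairing $\sum c_i \mu_{\Gamma_i} = 0$ against $f$ yields $c_j \int f \, d\mu_{\Gamma_j} = 0$, forcing $c_j = 0$, a contradiction.

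The main technical point is justifying that $\Gamma_j$ itself (rather than some more exotic closed subgroup) can serve as a witness of separation, and this is precisely the content of Lemma \ref{lem:LimitsOfConjugates}: the only "new" subgroups appearing in $\overline{G * \Gamma_i} \setminus (G * \Gamma_i)$ are abelian, so a nonabelian lattice $\Gamma_j$ cannot sneak into a different orbit closure. Once that separation is in hand, the Urysohn-style test function argument is routine.
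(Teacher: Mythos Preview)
Your proof is correct and follows essentially the same approach as the paper: both use Lemma~\ref{lem:LimitsOfConjugates} to show that $\Gamma_j$ does not lie in $\overline{G * \Gamma_i}=\supp(\mu_{\Gamma_i})$ for $i\neq j$, and then separate the measures accordingly. The only cosmetic difference is that the paper evaluates the linear combination directly on an open neighborhood $U_j$ of $\Gamma_j$ disjoint from the other supports, whereas you pair against a Urysohn test function; both extract $c_j=0$ in the same way.
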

\begin{proof}
	Let $\lambda_1, \ldots, \lambda_m \in \RR$ such that
	\[ 0 = \sum_{i=1}^m \lambda_i \cdot \mu_{\Gamma_i} .\]
	For all $i,j \in \{1,\ldots, m\}, i \neq j,$ there is an open neighborhood $U_{i,j} \subseteq \Sub(G)$ of $\Gamma_i$ such that $U_{i,j} \cap G * \Gamma_j = \emptyset$. Indeed, otherwise there would be $i \neq j$ and a sequence $(g_n)_{n \in \NN} \subset G$ such that $g_n \Gamma_j g_n^{-1} \to \Gamma_i$ as $n \to \infty$. Because $\Gamma_i$ is not abelian this implies that $\Gamma_i$ is conjugate to $\Gamma_j$ by Lemma \ref{lem:LimitsOfConjugates}; contradiction.
	
	We set 
	\[U_i = \bigcap_{\substack{j=1 \\j\neq i}}^m U_{i,j}\] 
	such that $U_i$ is an open neighborhood of $\Gamma_i$ satisfying
	\[ \emptyset = U_i \cap \ol{G*\Gamma_j} = U_i \cap \supp(\mu_{\Gamma_j})\]
	for every $j \neq i$. Then
	\[ 0 = \left( \sum_{i=1}^m \lambda_i \cdot \mu_{\Gamma_i} \right) (U_i) = \lambda_i \cdot \underbrace{\mu_{\Gamma_i}(U_i)}_{>0}  \]
	such that $\lambda_i = 0$ for every $i=1, \ldots,m$.
\end{proof}

From Lemma \ref{lem:LinearIndependenceIRS} it is immediate that $\iota \colon \Moduli(\Sigma) \longrightarrow \IRS(G)$ is injective. Our next goal is to prove the following.

\begin{propdefn}\label{prop:topembedding}
	The map $\iota \colon \Moduli(\Sigma) \longrightarrow \IRS(G)$ is a topological embedding.
	
	We call the closure of its image $\IRSModuli(\Sigma) \coloneqq \ol{\iota(\Moduli(\Sigma))}$ the \emph{IRS compactification of moduli space}.
\end{propdefn}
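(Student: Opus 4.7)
The proof naturally splits into verifying (i) continuity of $\iota$ and (ii) continuity of its inverse on the image; combined with the injectivity already established in Lemma \ref{lem:LinearIndependenceIRS}, this gives the topological embedding.

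For (i), let $[\Gamma_n]\to[\Gamma]$ in $\Moduli(\Sigma)$. By Proposition \ref{prop:ModuliAsLattices} we may pick representatives so that $\Gamma_n\to\Gamma$ in the Chabauty topology. We need to show $\int f\,d\mu_{\Gamma_n}\to\int f\,d\mu_\Gamma$ for every $f\in C(\Sub(G))$. Unwinding the definition,
\[ \int_{\Sub(G)} f\,d\mu_{\Gamma_n} \,=\, \int_{\Gamma_n\backslash G} f(g^{-1}\Gamma_n g)\,d\nu_{\Gamma_n}(\Gamma_n g), \]
an integral of a continuous function over the normalized Haar measure on $\Gamma_n\backslash G$, naturally identified with the orthonormal frame bundle of $X_n=\Gamma_n\backslash\HH^2$ carrying normalized Liouville measure. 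The strategy is truncation: fix $\epsilon$ below the Margulis constant and decompose each integral into its $\epsilon$-thick and $\epsilon$-thin contributions. The thin part of $X_n$ consists only of cusp neighborhoods, whose total volume is bounded uniformly in $n$ by a quantity tending to zero with $\epsilon$, since $\vol(X_n)=-2\pi\chi(\Sigma)$ is constant; hence the thin contribution is bounded by $\|f\|_\infty$ times a small quantity independent of $n$. On the thick part, the diffeomorphisms $F_n$ of Proposition \ref{prop:ChabGeomCharact} identify a large ball of $X$ with a corresponding region of $X_n$ via maps $C^\infty$-close to isometries; transporting to a fixed domain and using continuity of the conjugation action of $G$ on $\Sub(G)$ then yields $L^1$-convergence of the integrands, which is the content of Lemma \ref{lem:L1ConvTruncDom}.

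For (ii), suppose $\mu_{\Gamma_n}\to\mu_\Gamma$ in $\IRS(G)$. We produce $g_n\in G$ with $h_n\coloneqq g_n^{-1}\Gamma_n g_n\to\Gamma$ in the Chabauty topology; by continuity of the quotient map $\Lattices(\Sigma)\to G\backslash\Lattices(\Sigma)\cong\Moduli(\Sigma)$ it then follows that $[\Gamma_n]=[h_n]\to[\Gamma]$. Fix a countable decreasing basis $\{\mc{U}_k\}_{k\in\NN}$ of open neighborhoods of $\Gamma$ in $\Sub(G)$. Since $\Gamma\in\supp(\mu_\Gamma)=\ol{G*\Gamma}$ we have $\mu_\Gamma(\mc{U}_k)>0$ for each $k$, and the Portmanteau theorem yields $\liminf_n\mu_{\Gamma_n}(\mc{U}_k)\geq\mu_\Gamma(\mc{U}_k)>0$. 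Thus $\mc{U}_k$ meets $\supp(\mu_{\Gamma_n})=\ol{G*\Gamma_n}$ for every $n$ beyond some index $n_k$, and since $G*\Gamma_n$ is dense in $\ol{G*\Gamma_n}$ it already meets $\mc{U}_k$ itself. A diagonal choice then produces the required sequence $(g_n)$.

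The main obstacle is (i): the measures $\nu_{\Gamma_n}$ live on varying spaces $\Gamma_n\backslash G$, and their fundamental domains in $G$ change with $n$, so the integrals cannot be directly compared. The geometric control of Proposition \ref{prop:ChabGeomCharact} handles the thick parts, while the uniform smallness of cusp contributions in $\epsilon$, packaged by Lemma \ref{lem:L1ConvTruncDom}, handles the thin parts.
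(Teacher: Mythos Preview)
Your proposal is correct and follows essentially the same route as the paper. For (i) the paper writes the integral over a Dirichlet fundamental domain and cites Lemma~\ref{lem:L1ConvTruncDom} together with the elementary product lemma (Lemma~\ref{lem:ProdConvL1}) rather than unpacking the thick--thin decomposition you describe (that decomposition is the content of the \emph{proof} of Lemma~\ref{lem:L1ConvTruncDom} in Section~\ref{section:ProofOfLemma}); for (ii) the paper replaces your appeal to Portmanteau on open sets by an Urysohn bump function and weak-$*$ convergence, which is the same argument in different clothing.
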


\begin{remark}
	In the case where $\Sigma$ is compact it follows from \cite[Proposition 11.2]{locrig16} that the map $\iota$ is continuous.
\end{remark}

Before we attempt a proof let us understand the measure $\mu_\Gamma \in \IRS(G)$, $\Gamma \in \Lattices(\Sigma)$, in terms of a Haar measure $\nu$ on $G$. Let $F \in C(\Sub(G))$ be a continuous function. Then
\begin{align*}
\int_{\Sub(G)} F(H) \, d \mu_\Gamma(H) &= \int_{\Gamma \backslash G} F(g^{-1} \Gamma g) \, d\nu_\Gamma(g \Gamma) \\
&= \nu(D)^{-1} \int_G \II_D(g) \cdot F(g^{-1} \Gamma g) \, d\nu(g),
\end{align*}
where $D \subset G$ is a fundamental domain for the action $\Gamma \acts G$. 

We shall pick a particularly convenient Haar measure $\nu$ on $G$ now. Recall that $G = \PSL(2,\RR) = \Isom_+(\HH^2)$, and let $o \in \HH^2$. Then the map $p \colon G \to \HH^2, g \mapsto go$ is surjective and induces an identification $G/K \cong \HH^2$ where $K = \stab_G(o)$ is a conjugate of $\SO(2,\RR) < \PSL(2,\RR)$. Thus the hyperbolic area measure $v$ on $\HH^2$ amounts via this identification to a $G$-invariant measure on $G/K$ which we shall denote by $v$ as well. Further, we choose a normalized Haar measure $\eta$ on $K$ such that $\eta(K) = 1$. By Weil's quotient formula we obtain a Haar measure $\nu$ on $G$ such that
\[ \int_G f(g)\, d\nu(g) = \int_{G/K} \int_K f(gk) \, d\eta(k) \, dv(gK) \]
for every $f \in L^1(G,\nu)$. With this choice we have that 
$\nu(p^{-1}(B)) = v(B)$ for every measurable subset $B \subseteq \HH^2$.

We will need the following Lemma.
\begin{lemma} \label{lem:convL1quotmeas}
	Let $(f_n)_{n \in \NN} \subset L^1(G/K,v)$ be a sequence converging to $f \in L^1(G/K,v)$ with respect to the $L^1$-norm. Then 
	the sequence $(f_n \circ p)_{n \in \NN}$ converges to $f \circ p$ in $L^1(G,\nu)$ as $n \to \infty$.
\end{lemma}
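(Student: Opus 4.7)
The plan is to observe that composition with the projection $p \colon G \to G/K$ is an $L^1$-isometry, so $L^1$-convergence on the base transfers verbatim to $L^1$-convergence upstairs.

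More precisely, I would first note that for any measurable $\phi \colon G/K \to \RR$, the pulled-back function $\phi \circ p$ is automatically right $K$-invariant, since for $g \in G$ and $k \in K$ one has $p(gk) = gkK = gK = p(g)$. Applying Weil's quotient formula to $\abs{\phi \circ p}$ and using $\eta(K) = 1$ gives
\[
\norm{\phi \circ p}_{L^1(G,\nu)} = \int_{G/K} \int_K \abs{\phi(p(gk))} \, d\eta(k) \, dv(gK) = \int_{G/K} \abs{\phi(gK)} \, dv(gK) = \norm{\phi}_{L^1(G/K, v)}.
\]
In particular $\phi \circ p \in L^1(G,\nu)$ whenever $\phi \in L^1(G/K,v)$, and the pullback map $L^1(G/K,v) \to L^1(G,\nu)$, $\phi \mapsto \phi \circ p$, is linear and isometric.

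Applying this isometry to $\phi = f_n - f$ yields
\[
\norm{f_n \circ p - f \circ p}_{L^1(G,\nu)} = \norm{(f_n - f) \circ p}_{L^1(G,\nu)} = \norm{f_n - f}_{L^1(G/K,v)},
\]
and the right-hand side tends to $0$ by assumption, proving the lemma.

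There is no real obstacle here: the only point that needs care is justifying the use of Weil's quotient formula for (the positive function) $\abs{\phi \circ p}$ rather than only for $L^1$ functions on $G$, but this is standard and follows by monotone approximation from below by nonnegative $L^1$ functions, or simply by noting that if $\phi \in L^1(G/K,v)$ then the formula above shows the pullback is in $L^1(G,\nu)$, so Weil's formula applies directly.
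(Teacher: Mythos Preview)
Your proof is correct and is essentially the same as the paper's: both apply Weil's quotient formula to $\abs{f_n\circ p - f\circ p}$, use $p(gk)=gK$ and $\eta(K)=1$, and conclude $\norm{f_n\circ p - f\circ p}_{L^1(G,\nu)} = \norm{f_n - f}_{L^1(G/K,v)} \to 0$. The only cosmetic difference is that you first record the general fact that $\phi \mapsto \phi\circ p$ is an $L^1$-isometry and then specialize to $\phi = f_n - f$, whereas the paper carries out the computation directly for $f_n - f$.
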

\begin{proof}
	We compute
	\begin{align*}
	\norm{ f_n \circ p - f \circ p}_{L^1(G)} &= \int_G \abs{f_n (p(g)) - f (p(g)) } \, d\nu(g) \\
	&= \int_{G/K} \int_K \abs{f_n(p(gk)) - f(p(gk))} \, d\eta(k) \, dv(gK) \\
	&= \int_{G/K} \int_K \abs{f_n(gK) - f(gK)} \, d\eta(k) \, dv(gK) \\
	&= \int_{G/K} \eta(K) \cdot \abs{f_n(gK) - f(gK)} \, dv(gK)\\
	&= \int_{G/K} \abs{f_n(gK) - f(gK)} \, dv(gK) \to 0 \quad (n \to \infty).
	\end{align*}
\end{proof}

Given a point $o \in \HH^2$ there is a particularly nice choice of fundamental domain for a discrete and torsion-free subgroup $\Gamma < G$.

\begin{defn}
	Let $\Gamma < G$ be a discrete torsion-free subgroup and $o \in \HH^2$. The \emph{Dirichlet fundamental domain} of $\Gamma$ with respect to $o \in \HH^2$ is defined as
	\[ D_o(\Gamma) = \{ x \in \HH^2 \, | \, d(x,o) \leq d(x,\gamma o) \quad \forall \gamma \in \Gamma \setminus \{1\} \}. \]
\end{defn}

The following Lemma is immediate from the definitions.

\begin{lemma}
	Let $\Gamma < G$ be a discrete torsion-free subgroup and $o \in \HH^2$.
	Then the Dirichlet domain $D_o(\Gamma)$ is a fundamental domain for the $\Gamma$-action on $\HH^2$. Further, its preimage $F_o(\Gamma) = p^{-1}(D_o(\Gamma))$ is a fundamental domain for the $\Gamma$-action on $G$.
\end{lemma}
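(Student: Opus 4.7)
The plan is to handle the two claims in sequence and then reduce the second to the first via the $\Gamma$-equivariance of $p$.

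For the first claim, I would verify the two defining properties of a fundamental domain. \textbf{Surjectivity of $\Gamma$-translates:} Given $x \in \HH^2$, because $\Gamma \leq G$ is discrete and torsion-free it acts freely and properly discontinuously on $\HH^2$, so the orbit $\Gamma \cdot x$ is discrete. Hence the infimum $\inf_{\gamma \in \Gamma} d(\gamma x, o)$ is attained by some $\gamma_0 \in \Gamma$, and then $\gamma_0 x \in D_o(\Gamma)$ by definition. \textbf{Disjointness of interiors:} Each condition $d(x,o) \leq d(x, \gamma o)$ cuts out a closed half-space $H_\gamma \subseteq \HH^2$ bounded by the perpendicular bisector of the geodesic segment $[o,\gamma o]$, and $D_o(\Gamma) = \bigcap_{\gamma \neq 1} H_\gamma$. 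Local finiteness of $\Gamma \cdot o$ (again from discreteness) implies that the interior of $D_o(\Gamma)$ consists exactly of those $x$ satisfying strict inequalities $d(x,o) < d(x,\gamma o)$ for all $\gamma \neq 1$. If now $x$ and $\gamma x$ both lay in $\interior(D_o(\Gamma))$ for some $\gamma \neq 1$, then applying the strict inequality to $x$ with the element $\gamma^{-1}$ gives $d(x,o) < d(x, \gamma^{-1}o) = d(\gamma x,o)$, while applying it to $\gamma x$ with the element $\gamma$ gives $d(\gamma x, o) < d(\gamma x, \gamma o) = d(x,o)$, a contradiction.

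For the second claim, observe that the map $p \colon G \to \HH^2$, $g \mapsto go$, is $\Gamma$-equivariant with respect to the left-multiplication action of $\Gamma$ on $G$ and the isometric action of $\Gamma$ on $\HH^2$, since $p(\gamma g) = \gamma g o = \gamma \cdot p(g)$. Moreover $p$ is a continuous open surjection whose fibers are the right $K$-cosets, and these are disjoint from the $\Gamma$-action (left and right multiplications commute). Consequently $F_o(\Gamma) = p^{-1}(D_o(\Gamma))$ inherits both properties: given any $g \in G$, pick $\gamma \in \Gamma$ with $\gamma \cdot p(g) \in D_o(\Gamma)$ so that $\gamma g \in F_o(\Gamma)$; and since $\interior(F_o(\Gamma)) = p^{-1}(\interior(D_o(\Gamma)))$, if $g$ and $\gamma g$ were both in $\interior(F_o(\Gamma))$ for some $\gamma \neq 1$, then $p(g)$ and $\gamma \cdot p(g)$ would both lie in $\interior(D_o(\Gamma))$, contradicting the first part.

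The main (very minor) obstacle is the identification of $\interior(D_o(\Gamma))$ with the set of strict minimizers; everything else is a direct transfer through the equivariant fibration $p$. Since the paper itself flags this as ``immediate from the definitions'', I would keep the write-up brief and rely on the classical properties of Dirichlet domains without reproducing the convexity/local-finiteness arguments in full detail.
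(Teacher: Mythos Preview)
Your proposal is correct. The paper does not actually prove this lemma at all --- it simply remarks that it is ``immediate from the definitions'' and moves on --- so your sketch is strictly more detailed than what appears in the paper, and what you wrote is exactly the standard verification one would give if pressed.
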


If the group $\Gamma$ above is not a lattice then its fundamental domain does not have finite area. However, we can truncate the Dirichlet domain to obtain a domain with finite area:

Recall that the \emph{limit set} of $\Gamma$ is defined as $L(\Gamma) = \overline{ \Gamma o} \cap \partial \HH^2$. Its complement $\Omega(\Gamma) = \partial \HH^2 \setminus L(\Gamma)$ is called the \emph{domain of discontinuity}. If $\Gamma$ is a lattice then $L(\Gamma) = \partial\HH^2$. If $\Gamma \backslash\HH^2$ has free ends then the domain of discontinuity is non-empty. Every connected component of $\Omega(\Gamma)$ is an interval in $\partial \HH^2 \cong \SS^1$. The endpoints of these intervals are exactly the fixed points of hyperbolic elements in $\Gamma$ corresponding to free ends of $\Gamma \backslash\HH^2$. We define
\[ \tilde{C}(\Gamma) = \conv(L(\Gamma)) \cap \HH^2\]
and call
\[ C(\Gamma) = \Gamma \backslash \tilde{C}(\Gamma)\]
its convex core.
By what we said before $C(\Gamma) = \Gamma \backslash \HH^2$ if $\Gamma$ is a lattice. Otherwise, $\HH^2 \setminus \tilde{C}(\Gamma)$ are half-spaces bounded by the axes of hyperbolic elements representing waist curves of free ends. 

If $\Gamma \backslash \HH^2$ has free ends it has infinite volume. However, its convex core $C(\Gamma) = \Gamma \backslash \tilde{C}(\Gamma)$ has finite volume. Indeed, considering only the convex core amounts to cutting off the free ends of $\Gamma \backslash\HH^2$ at their waist geodesic. Thus $C(\Gamma)$ may be decomposed in $\abs{\chi(\Gamma \backslash \HH^2)}$ (possibly degenerate) pairs of pants all of which have area $2 \pi$ such that $\vol(C(\Gamma)) = 2\pi \abs{\chi(\Gamma \backslash \HH^2)}$. 

We define the \emph{truncated Dirichlet domain} to be the intersection
\[ \hat{D}_o(\Gamma) = D_o(\Gamma) \cap \tilde{C}(\Gamma). \]
Since the convex core has finite volume, we have that $\II_{\hat{D}_o(\Gamma)} \in L^1(\HH^2,v)$. Analogously, we define
\[ \hat{F}_o(\Gamma) := p^{-1}(\hat{D}_o(\Gamma))\subseteq G. \]
Note that
\[ \nu(\hat{F}_o(\Gamma)) = v(\hat{D}_o(\Gamma)) =  2\pi \abs{\chi(\Gamma \backslash \HH^2)}.\]

This construction is well-behaved with respect to variations of the group $\Gamma$ as the following Lemma asserts.

\begin{lemma}\label{lem:L1ConvTruncDom}
	Let $(\rho_n)_{n \in \NN} \subset \Rep^*(\Sigma)$ be a sequence converging to $\rho \in \Rep^*(\Sigma)$ and denote $\Gamma = \im \rho$, $\Gamma_n = \im \rho_n$, $n \in \NN$. Then
	\[ \II_{\hat{D}_o(\Gamma_n)} \to \II_{\hat{D}_o(\Gamma)} \quad (n \to \infty) \]
	in $L^1(\HH^2,v)$. 
	
	In particular,
	\[ \II_{\hat{F}_o(\Gamma_n)} \to \II_{\hat{F}_o(\Gamma)} \quad (n \to \infty) \]
	in $L^1(G,\nu)$ by Lemma \ref{lem:convL1quotmeas}.
\end{lemma}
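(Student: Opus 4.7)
My plan is to upgrade pointwise a.e.\ convergence of the indicator functions to $L^1$-convergence via Scheff\'e's lemma, using that the total mass $v(\hat{D}_o(\Gamma_n)) = 2\pi|\chi(\Sigma)|$ is independent of $n$. By Proposition \ref{prop:geometric_equals_algebraic} the algebraic convergence $\rho_n \to \rho$ is equivalent to Chabauty convergence $\Gamma_n \to \Gamma$ in $\Subdtf(G)$, so I may freely invoke the characterization (C1)+(C2) of Proposition \ref{prop:charact_con_Chab}. I factor $\II_{\hat{D}_o(\Gamma_n)} = \II_{D_o(\Gamma_n)} \cdot \II_{\widetilde{C}(\Gamma_n)}$ and prove a.e.\ convergence of each factor separately.

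For the Dirichlet factor, take $x \in \interior(D_o(\Gamma))$. If some $\gamma_n \in \Gamma_n \setminus \{1\}$ violated $d(x, o) \leq d(x, \gamma_n o)$, then $d(o, \gamma_n o) < 2 d(x, o)$, so $\{\gamma_n\}$ lies in a compact subset of $G$; any subsequential limit belongs to $\Gamma$ by (C2), and the strict interior condition forces the limit to be $1$, contradicting the uniform identity-neighborhood of Lemma \ref{lem:nbhddiscrete}. For $x$ exterior to $D_o(\Gamma)$, a witness $\gamma \in \Gamma \setminus \{1\}$ with $d(x, \gamma o) < d(x, o)$ lifts via (C1) to $\gamma_n \to \gamma$, and the strict inequality persists. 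The boundary $\partial D_o(\Gamma)$ is a locally finite union of geodesic arcs and hence null.

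For the convex-core factor, decompose $\HH^2 \setminus \widetilde{C}(\Gamma)$ into its open half-spaces $H_\gamma$, each bordered by the axis of a hyperbolic $\gamma \in \Gamma$ that is $\Gamma$-conjugate to some $\rho(c_i)$ with $c_i \in \pi_1(\Sigma)$ peripheral and $\rho(c_i)$ hyperbolic. For $x \in H_\gamma$, Chabauty (C1) yields $\gamma_n \to \gamma$; since $\gamma$ is hyperbolic and trace is continuous, $\gamma_n$ is eventually hyperbolic, the axes converge, and $x$ lies in the corresponding $H_{\gamma_n} \subset \HH^2 \setminus \widetilde{C}(\Gamma_n)$. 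Conversely, suppose $x \in \interior(\widetilde{C}(\Gamma))$ lies in $H_{\gamma_n'}$ along a subsequence, with $\gamma_n' \in \Gamma_n$ hyperbolic and conjugate to some $\rho_n(c_{i_n})$. Disjointness of the half-spaces together with $x \in H_{\gamma_n'}$ bounds $d(o,\ax(\gamma_n'))$, which together with the displacement formula $\cosh\bigl(\tfrac{1}{2} d(o, \gamma_n' o)\bigr) = \cosh\bigl(\tfrac{\ell(\gamma_n')}{2}\bigr) \cosh\bigl(d(o, \ax(\gamma_n'))\bigr)$ and the uniform upper bound on $\ell(\gamma_n') = \ell(\rho_n(c_{i_n}))$ confines $\gamma_n'$ to a compact subset of $G$. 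A subsequential limit $\gamma' \in \Gamma$ then exists by (C2), and is either hyperbolic --- forcing $x \in H_{\gamma'}$, contradicting $x \in \interior(\widetilde{C}(\Gamma))$ --- or parabolic, in which case both endpoints of $\ax(\gamma_n')$ coalesce to the unique fixed point of $\gamma'$ on $\partial \HH^2$, so $H_{\gamma_n'}$ eventually avoids $x$.

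The most delicate step is this last one: controlling half-spaces bordered by axes of hyperbolic elements that approximate a parabolic. The key geometric input is that for $g_n \to g$ in $G$ with $g_n$ hyperbolic and $g$ parabolic, both boundary fixed points of $g_n$ coalesce to the fixed point of $g$, so $\ax(g_n)$ leaves every compact subset of $\HH^2$; a conjugation argument propagates this to arbitrary $\Gamma_n$-translates. Once pointwise a.e.\ convergence of both factors is in hand, Scheff\'e's lemma combined with the constant $L^1$-mass $2\pi|\chi(\Sigma)|$ yields the claimed $L^1$-convergence.
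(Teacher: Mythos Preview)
Your use of Scheff\'e's lemma is a genuine and elegant simplification over the paper's argument. The paper establishes the same pointwise a.e.\ convergence of $\II_{\tilde C(\Gamma_n)}$ and $\II_{D_o(\Gamma_n)}$ (its Lemmas~\ref{lem:CoreConv} and~\ref{lem:DomConv}), but then spends five further lemmas (a filling-curve characterisation of peripheral elements, a lower bound on essential curve lengths, a diameter estimate for the thick part via injectivity radius, and an explicit area computation for thin regions) to show that the symmetric differences $\hat D_o(\Gamma_n) \triangle \hat D_o(\Gamma)$ have uniformly small mass outside a large ball, and only then invokes dominated convergence on that ball. Since $v(\hat D_o(\Gamma_n)) = 2\pi|\chi(\Sigma)|$ is constant in $n$, Scheff\'e bypasses all of this: pointwise a.e.\ convergence of nonnegative functions with constant $L^1$-mass already forces $L^1$-convergence. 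This is a real shortcut.

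That said, your pointwise argument for the convex-core factor has a gap at the step ``disjointness of the half-spaces together with $x \in H_{\gamma_n'}$ bounds $d(o,\ax(\gamma_n'))$''. Disjointness of the complementary half-spaces does not by itself prevent the particular half-space containing $x$ from having its bounding axis arbitrarily far from $o$ (or from $x$): nothing stops $H_{\gamma_n'}$ from being a very deep half-space swallowing both $x$ and a large ball. The paper's Lemma~\ref{lem:CoreConv} handles this by a non-abelian argument: if $d(x,\ax(\gamma_{n_k})) \to \infty$, then any $\eta_{n_k} \in \Gamma_{n_k}$ with bounded displacement at $x$ must satisfy $\eta_{n_k} x \in H_{\gamma_{n_k}}$, and since the complementary half-spaces are permuted by $\Gamma_{n_k}$ and are pairwise disjoint, this forces $\eta_{n_k}$ to preserve $H_{\gamma_{n_k}}$ and hence share its axis with $\gamma_{n_k}$; two such sequences $\eta_{n_k}, \eta_{n_k}'$ then commute, contradicting that their limits generate a non-abelian subgroup of $\Gamma$. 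Once the axis-distance bound is in place, your displacement-formula compactness works, and the limit $\gamma'$ is automatically hyperbolic (a parabolic limit would force the axes to leave every compactum, contradicting the bound), so your separate treatment of the parabolic case is then unnecessary. Plugging in the paper's argument for this one step, your Scheff\'e route goes through and is substantially shorter.
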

Although Lemma \ref{lem:L1ConvTruncDom} seems to be classical, we could not find any proof in the literature. Hence, we decided to include a complete proof in section \ref{section:ProofOfLemma}.

Also, the following consequence of the dominated convergence theorem will be useful later on.
\begin{lemma}\label{lem:ProdConvL1}
	Let $(X,\mu)$ be a measure space, let $(f_n)_{n \in \NN} \subset L^1(X,\mu)$ and let $(g_n)_{n \in \NN} \subset L^\infty(X,\mu)$. Assume that there is $f \in L^1(X,\mu)$ such that
	\[ \| f_n - f\|_{L^1} \to 0 \quad (n \to \infty), \]
	and that there is $C > 0$ and $g \in L^\infty(X,\mu)$ such that $\|g_n\|_{L^\infty} \leq C$, for every $n \in \NN$, and 
	\[ g_n(x) \to g(x) \quad (n\to \infty)\]
	for $\mu$-almost-every $x \in X$.
	
	Then
	\[ \| f_n \cdot g_n - f \cdot g\|_{L^1} \to 0 \quad (n\to \infty).\]
\end{lemma}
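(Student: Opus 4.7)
The plan is to split $\|f_n g_n - f g\|_{L^1}$ via the triangle inequality and estimate each piece separately. Specifically, I would write
\[ \|f_n g_n - f g\|_{L^1} \leq \|f_n g_n - f g_n\|_{L^1} + \|f g_n - f g\|_{L^1} \]
and treat each of the two summands in turn.

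For the first summand, I would factor out $g_n$ using the $L^\infty$ bound. Since $|f_n(x) g_n(x) - f(x) g_n(x)| \leq \|g_n\|_{L^\infty} \cdot |f_n(x) - f(x)| \leq C \cdot |f_n(x) - f(x)|$ for $\mu$-almost every $x$, integrating gives
\[ \|f_n g_n - f g_n\|_{L^1} \leq C \cdot \|f_n - f\|_{L^1} \longrightarrow 0 \qquad (n \to \infty) \]
by hypothesis.

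For the second summand, I would apply the dominated convergence theorem. The integrand $|f(x) g_n(x) - f(x) g(x)|$ converges to $0$ for $\mu$-almost every $x$ by the pointwise convergence of $g_n$ to $g$, and it is bounded above by $2C \cdot |f(x)|$, which is integrable since $f \in L^1(X,\mu)$. Hence
\[ \|f g_n - f g\|_{L^1} = \int_X |f(x)| \cdot |g_n(x) - g(x)| \, d\mu(x) \longrightarrow 0 \qquad (n\to\infty). \]
Combining the two estimates yields the desired conclusion. There is no real obstacle here: the argument is entirely routine, and the only mild subtlety is being careful that the $L^\infty$ bound on $g_n$ holds almost everywhere and is used uniformly in $n$ so that it can be pulled out of the first integral.
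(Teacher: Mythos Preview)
Your proposal is correct and follows essentially the same approach as the paper: the same triangle-inequality splitting $\|f_n g_n - f g\|_{L^1} \leq C\,\|f_n - f\|_{L^1} + \int_X |f|\,|g_n - g|\,d\mu$, with the second term handled by dominated convergence using the integrable bound $2C|f|$. There is no meaningful difference between the two arguments.
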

\begin{proof}
	We compute
	\begin{align*}
	\| f_n \cdot g_n - f \cdot g\|_{L^1} 
	&\leq \|f_n \cdot g_n - f \cdot g_n \|_{L^1} 
	+ \| f \cdot g_n - f \cdot g\|_{L^1}\\
	&\leq C \cdot \| f_n - f \|_{L^1} + \int_X \abs{f(x)} \cdot \abs{g_n(x) - g(x)} \, d\mu(x).
	\end{align*}
	Note that 
	\[\abs{f(x)} \cdot \abs{g_n(x) - g(x) } \to 0 \quad(n \to \infty) \] 
	for $\mu$-almost-every $x \in X$ and the functions $\abs{f(x)} \cdot \abs{g_n(x) - g(x) }$ are $\mu$-almost-everywhere dominated by the integrable function $2 C \abs{f(x)}$. By the dominated convergence theorem we conclude that 
	\[\int_X \abs{f(x)} \cdot \abs{g_n(x) - g(x)} \, d\mu(x) \to 0 \quad (n\to \infty),\]
	which in turn implies
	\[ \| f_n \cdot g_n - f \cdot g\|_{L^1} \to 0 \quad (n\to \infty).\]
\end{proof}

After these preparations we are now ready to prove Proposition \ref{prop:topembedding}.
\begin{proof}[Proof of Proposition \ref{prop:topembedding}]
	We want to show that $\iota \colon G \backslash \Lattices(\Sigma) \to \IRS(G)$ is a topological embedding. 
	
	First, let us prove that $\iota$ is continuous.
	Let $([\Gamma_n])_{n\in \NN} \subset G \backslash \Lattices(\Sigma)$ be a convergent sequence with limit $[\Gamma] \in G \backslash \Lattices(\Sigma)$. Up to taking conjugates we may assume that $\Gamma_n \to \Gamma$ in $\Lattices(\Sigma)$. Let $o \in \HH^2$ and we consider the fundamental domains $F_o(\Gamma_n)= p^{-1}(D_o(\Gamma_n))$ for $\Gamma_n \acts G$. Since $\Gamma_n$ is a lattice we have that $\tilde{C}(\Gamma_n) = \HH^2$ and $\hat{D}_o(\Gamma_n) = D_o(\Gamma_n)$.
	
	Let $f \in C(\Sub(G))$. Then
	\begin{align*}
	&\quad \abs{ \int_{\Sub(G)} f(H) \, d\mu_{\Gamma_n}(H) - \int_{\Sub(G)} f(H) \, d\mu_{\Gamma}(H) } \\
	& = \abs{ \nu(F_o(\Gamma_n))^{-1} \cdot \int_G \II_{F_o(\Gamma_n)}(g) \cdot f(g^{-1} \Gamma_n g) \, d \nu(g) -  \nu(F_o(\Gamma))^{-1} \cdot \int_G \II_{F_o(\Gamma)}(g) \cdot f(g^{-1} \Gamma g) \, d \nu(g)}\\
	& \leq \frac{1}{2\pi \abs{\chi(\Sigma)}} \cdot \left \| \II_{F_o(\Gamma_n)} \cdot \bar{f}_n - \II_{F_o(\Gamma)} \cdot \bar{f} \right\|_{L^1},
	\end{align*}
	where we set $\bar{f}_n(g) \coloneqq f(g^{-1} \Gamma_n g), \bar{f}(g) \coloneqq f(g^{-1} \Gamma g)$ for every $g \in G$. Note that $f$ is uniformly bounded because $\Sub(G)$ is compact, so that $(\bar{f}_n)_{n \in \NN}$ are uniformly bounded, too. Moreover, 
	\[\bar{f}_n(g) = f(g^{-1} \Gamma_n g) \to \bar{f}(g) = f(g^{-1} \Gamma g)  \qquad (n \to \infty)\]
	for every $g \in G$, by continuity. By Lemma \ref{lem:L1ConvTruncDom} we know that 
	\[ \|\II_{F_o(\Gamma_n)} - \II_{F_o(\Gamma)}\|_{L^1(G,\nu)} \to 0 \qquad (n \to \infty).\]
	Thus we may apply Lemma \ref{lem:ProdConvL1} and conclude that
	\[ \int_{\Sub(G)} f(H) \, d\mu_{\Gamma_n}(H) \to \int_{\Sub(G)} f(H) \, d\mu_{\Gamma}(H) \qquad (n \to \infty). \]
	This shows that $\iota$ is continuous.

	Finally, let $(\Gamma_n)_{n \in \NN} \subset \Lattices(\Sigma)$ and let $\Gamma \in \Lattices(\Sigma)$, such that $\mu_{\Gamma_n} \to \mu_{\Gamma}$ as $n \to \infty$. We want to show that $[\Gamma_n] \to [\Gamma]$ as $n \to \infty$. Let $\mc{U} \subset \Sub(G)$ be an open neighborhood of $\Gamma$, and let $\ol{\mc{V}} \subseteq \mc{U}$ be a compact neighborhood of $\Gamma$. By Urysohn's Lemma we find a continuous function $f \colon \Sub(G) \to [0, \infty)$ such that $f|_{\ol{\mc{V}}} \equiv 1$ and $f|_{\mc{U}^c} \equiv 0$. Because $\mu_{\Gamma_n} \to \mu_\Gamma$ we have that 
	\[ \int_{\Sub(G)} f(H) \, d\mu_{\Gamma_n}(H) \to \int_{\Sub(G)} f(H) \, d\mu_\Gamma(H) \qquad (n \to \infty). \]
	Because
	\[ \int_{\Sub(G)} f(H) \, d\mu_\Gamma(H) \geq \int_{\ol{\mc{V}}} f(H) \, d\mu_{\Gamma}(H) = \mu_\Gamma(\ol{\mc{V}}) > 0,\]
	also
	\[ \nu_{\Gamma_n}(\phi_{\Gamma_n}^{-1}(\mc{U})) = \mu_{\Gamma_n}(\mc{U}) \geq \int_{\Sub(G)} f(H) \, d\mu_{\Gamma_n}(H) > 0\]
	for large $n \in \NN$. Therefore, $\phi_{\Gamma_n}^{-1}(\mc{U}) \subseteq \Gamma \backslash G$ is a non-empty open subset, whence there are $g_n \in G$ such that $\phi_{\Gamma_n}(\Gamma_n g_n) = g_n^{-1} \Gamma_n g_n \in \mc{U}$.
	Because $\mc{U}$ was an arbitrary open neighborhood of $\Gamma$ it follows that $[\Gamma_n] \to [\Gamma]$ as $n\to \infty$.
	
	This shows that $\iota \colon G \backslash \Lattices(\Sigma) \hookrightarrow \IRS(G)$ is a topological embedding.
\end{proof}

\subsection{Augmented Moduli Space and the IRS compactification} \label{subsect:augmodandirs}

We shall now construct an extension $\Phi \colon \augModuli(\Sigma) \longrightarrow \IRSModuli(\Sigma)$ of the topological embedding 
$\iota \colon \Moduli(\Sigma) \hookrightarrow \IRSModuli(\Sigma) \subseteq \IRS(G):$
\begin{center}
	\begin{tikzcd}
		& \augModuli(\Sigma) \arrow[dd,"\Phi"] \\
		\Moduli(\Sigma) \arrow[ur,hook] \arrow[dr,hook,"\iota"] & \\
		& \IRSModuli(\Sigma)
	\end{tikzcd}
\end{center}

\begin{defn}
	We define a map $\tilde{\Phi} \colon \augTeich(\Sigma) \longrightarrow \IRS(G)$ by
	\[\tilde{\Phi}( ([\rho_{\Sigma'}])_{\Sigma' \in c(\sigma)} ) 
	:= \sum_{\Sigma' \in c(\sigma)} 
	\frac{\chi(\Sigma')}{\chi(\Sigma)} 
	\cdot     \mu_{\im \rho_{\Sigma'}}\]
	for every $([\rho_{\Sigma'}])_{\Sigma' \in c(\sigma)} \in \Teich_\sigma(\Sigma) \subset \augTeich(\Sigma), \sigma \subset \mc{C}(\Sigma)$.
\end{defn}

\begin{thm}\label{thm:main}
	The map $\tilde{\Phi} \colon \augTeich(\Sigma) \longrightarrow \IRS(G)$ is well-defined and continuous. 
	
	Moreover,
	$\tilde{\Phi}$ descends to a continuous finite-to-one surjection
	\[ \Phi \colon \augModuli(\Sigma) \longrightarrow \IRSModuli(\Sigma)\]
	extending the topological embedding $\iota \colon \Moduli(\Sigma) \hookrightarrow \IRSModuli(\Sigma)$. There is a uniform upper bound $B(\Sigma)>0$ which depends only on the topology of $\Sigma$, such that $\# \Phi^{-1}(\mu) \leq B(\Sigma)$ for all $\mu \in \IRSModuli(\Sigma)$.
\end{thm}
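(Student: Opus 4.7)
The proof naturally splits into three parts: checking that $\tilde{\Phi}$ descends to a continuous $\Phi$ extending $\iota$, deducing surjectivity onto $\IRSModuli(\Sigma)$, and establishing the uniform bound on fibres. I would first dispose of the formal parts, then tackle continuity (the main analytic step), and finally address the finiteness bound (the main combinatorial obstacle).

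\textbf{Well-definedness, equivariance, extension and surjectivity.} First, $\tilde{\Phi}$ is well-defined: by Lemma \ref{lem:IRSconjuginv} the IRS $\mu_{\im \rho_{\Sigma'}}$ depends only on the conjugacy class of $\im \rho_{\Sigma'}$, hence only on $[\rho_{\Sigma'}] \in \Teich(\Sigma')$; and $\sum_{\Sigma'\in c(\sigma)} \chi(\Sigma')/\chi(\Sigma)=1$ by additivity of Euler characteristic, so $\tilde{\Phi}$ lands in $\IRS(G)$. The $\MCG(\Sigma)$-action permutes $c(\sigma)$ and replaces each $\rho_{\Sigma'}$ by a representation with image equal to $\im \rho_{f(\Sigma')}$, while $\chi(f(\Sigma'))=\chi(\Sigma')$, so the weighted sum is preserved and $\tilde{\Phi}$ descends to $\Phi\colon \augModuli(\Sigma)\to \IRS(G)$. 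On the open stratum $\Teich(\Sigma)=\Teich_{\emptyset}(\Sigma)\subset\augTeich(\Sigma)$ the formula reduces to $\tilde{\Phi}([\rho]) = \mu_{\im\rho}$, so $\Phi$ extends $\iota$. Once continuity is proven, compactness of $\augModuli(\Sigma)$ makes $\Phi(\augModuli(\Sigma))$ compact and hence closed; density of $\iota(\Moduli(\Sigma))=\Phi(\Moduli(\Sigma))$ in $\IRSModuli(\Sigma)$ then forces $\Phi(\augModuli(\Sigma)) = \IRSModuli(\Sigma)$.

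\textbf{Continuity.} Suppose $\mf{r}^{(n)} \to \mf{r}$ with $\mf{r}^{(n)} \in \Teich_{\sigma_n}(\Sigma)$, $\sigma_n \subseteq \sigma$. For each $\Sigma' \in c(\sigma)$ let $\Sigma''(n,\Sigma') \in c(\sigma_n)$ be the unique component containing $\Sigma'$; by definition of the topology on $\augTeich(\Sigma)$ the restricted representations $\rho^{(n)}_{\Sigma''(n,\Sigma')} \circ \iota_{\Sigma'}$ converge in $\Rep^*(\Sigma')$ to $\rho_{\Sigma'}$. With the Haar normalization of Proposition \ref{prop:topembedding}, using $\chi(\Sigma'')=\sum_{\Sigma'\subseteq \Sigma''}\chi(\Sigma')$ and $\vol(\Gamma^{(n)}_{\Sigma''}\backslash\HH^2)=2\pi\abs{\chi(\Sigma'')}$, the coefficients $\chi(\Sigma'')/(\chi(\Sigma)\vol(\cdot))$ collapse to the uniform $1/(2\pi\abs{\chi(\Sigma)})$, letting one rewrite, for any $f \in C(\Sub(G))$,
\[
\int_{\Sub(G)} f\, d\tilde{\Phi}(\mf{r}^{(n)}) = \frac{1}{2\pi\abs{\chi(\Sigma)}}\sum_{\Sigma'\in c(\sigma)}\int_G \II_{F^{(n)}_{\Sigma'}}(g)\cdot f(g^{-1}\Gamma^{(n)}_{\Sigma''(n,\Sigma')}g)\,d\nu(g),
\]
where $F^{(n)}_{\Sigma'} := p^{-1}(\hat{D}^{(n)}_{\Sigma'})$ and $\hat{D}^{(n)}_{\Sigma'}\subseteq D_o(\Gamma^{(n)}_{\Sigma''(n,\Sigma')})$ is the subset projecting onto the subsurface of $\Gamma^{(n)}_{\Sigma''(n,\Sigma')}\backslash\HH^2$ corresponding to $\Sigma'$. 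The geometric heart of the argument is to identify $\hat{D}^{(n)}_{\Sigma'}$, up to null sets, with the truncated Dirichlet domain of the sublattice $\Gamma^{(n)}_{\Sigma''(n,\Sigma'),\Sigma'} := (\rho^{(n)}_{\Sigma''(n,\Sigma')}\circ\iota_{\Sigma'})(\pi_1(\Sigma'))$ acting on its convex core, so that Lemma \ref{lem:L1ConvTruncDom} applied to $\rho^{(n)}_{\Sigma''(n,\Sigma')}\circ\iota_{\Sigma'} \to \rho_{\Sigma'}$ yields $\II_{F^{(n)}_{\Sigma'}} \to \II_{\hat{F}_o(\Gamma_{\Sigma'})}$ in $L^1(G,\nu)$. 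For the $f$-factor, any element of $\Gamma^{(n)}_{\Sigma''(n,\Sigma')} \setminus \Gamma^{(n)}_{\Sigma''(n,\Sigma'),\Sigma'}$ corresponds to a loop in $\Sigma''$ crossing a pinching curve in $\sigma\setminus\sigma_n$, whose length shrinks to zero as $n\to\infty$; the Collar Lemma \ref{lem:CollarLemma} forces its translation length to infinity, so by the Chabauty criterion of Proposition \ref{prop:charact_con_Chab} one has pointwise $g^{-1}\Gamma^{(n)}_{\Sigma''(n,\Sigma')}g \to g^{-1}\Gamma_{\Sigma'}g$ and hence pointwise convergence of $f(g^{-1}\Gamma^{(n)}_{\Sigma''(n,\Sigma')}g) \to f(g^{-1}\Gamma_{\Sigma'}g)$. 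Uniform boundedness of $f$ (since $\Sub(G)$ is compact) and Lemma \ref{lem:ProdConvL1} combine to give convergence of each summand to the corresponding integral against $\II_{\hat{F}_o(\Gamma_{\Sigma'})}$, concluding that $\tilde{\Phi}(\mf{r}^{(n)}) \to \tilde{\Phi}(\mf{r})$.

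\textbf{Uniform fibre bound.} The hardest part will be the uniform bound $B(\Sigma)$. Given $\mu \in \IRSModuli(\Sigma)$, Lemma \ref{lem:LinearIndependenceIRS} shows that the decomposition $\mu = \sum_{i=1}^m \lambda_i\mu_{\Gamma_i}$ into distinct conjugacy classes of lattices is unique, and each pair $([\Gamma_i],\lambda_i)$ determines the isometry type of a component surface together with its area $\lambda_i\vol(\Sigma)$. A preimage $\mf{x}\in\Phi^{-1}(\mu)$ is then captured, via the finite covering by assembly maps of Proposition \ref{prop:FinitelyCoveredByAssemblyMaps}, by three pieces of combinatorial data: (a) an $\MCG(\Sigma)$-orbit of simplex $\sigma\subset\mc{C}(\Sigma)$ whose complementary components $c(\sigma)$ match the topological types of the $\Gamma_i\backslash\HH^2$; (b) a bijection $c(\sigma)\leftrightarrow\{[\Gamma_i]\}$ compatible with (a); and (c) for each $\Sigma'\in c(\sigma)$, a lift of the prescribed point in $\Moduli(\Sigma')$ along the finite quotient $\Moduli^*(\Sigma')\to\Moduli(\Sigma')$. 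Proposition \ref{prop:FinitelyCoveredByAssemblyMaps} bounds (a) by a topological constant; (b) is bounded by $\abs{c(\sigma)}! \leq (3g+p-3)!$; and (c) is bounded for each component by the index $p(\Sigma')! \leq p!$ of the short exact sequence $1\to\PMCG(\Sigma')\to\MCG(\Sigma')\to\Sym(p(\Sigma'))\to 1$. Multiplying these finitely many topological factors produces the desired $B(\Sigma)$.
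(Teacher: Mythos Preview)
Your approach matches the paper's closely: the same use of Lemma~\ref{lem:L1ConvTruncDom} and Lemma~\ref{lem:ProdConvL1} for continuity, the same Collar Lemma argument (the paper's Lemma~\ref{lem:FullGroupConv}) for the Chabauty convergence of the full group to the subgroup, and the same assembly-map/Lemma~\ref{lem:LinearIndependenceIRS} combinatorics for the fibre bound. Two points deserve correction.

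\textbf{Continuity: conjugation is suppressed.} The topology on $\augTeich(\Sigma)$ is defined via neighbourhoods in $\Teich^*_\sigma(\Sigma)$, so the restrictions $\rho^{(n)}_{\Sigma''(n,\Sigma')}\circ\iota_{\Sigma'}$ converge to $\rho_{\Sigma'}$ only in $\Teich^*(\Sigma')$, not in $\Rep^*(\Sigma')$ as you state. One must choose conjugating elements $g_n(\Sigma')\in G$ so that $g_n(\Sigma')^{-1}(\rho^{(n)}_{\Sigma''}\circ\iota_{\Sigma'})g_n(\Sigma')\to\rho_{\Sigma'}$ in $\Rep^*(\Sigma')$; only then does Lemma~\ref{lem:L1ConvTruncDom} apply, and the truncated Dirichlet domains must be taken at basepoints $g_n(\Sigma')o$ (equivalently, at $o$ for the conjugated groups). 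Without this, your $\hat{D}^{(n)}_{\Sigma'}$ are not anchored at a common basepoint and the $L^1$-convergence claim has no content. The paper carries the $g_n(\Sigma')$ through the computation explicitly; you should as well.

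\textbf{Fibre bound: the inequality $p(\Sigma')!\leq p!$ is false.} A component $\Sigma'\in c(\sigma)$ acquires a puncture from each curve of $\sigma$ on its boundary, so $p(\Sigma')$ can exceed $p$ arbitrarily (e.g.\ $\Sigma$ closed of genus~$2$, $\sigma$ a pants decomposition, $\Sigma'$ a thrice-punctured sphere: $p=0$ but $p(\Sigma')=3$). The correct bound, used in the paper, comes from $\chi(\Sigma')=2-2g(\Sigma')-p(\Sigma')\geq\chi(\Sigma)$, giving $p(\Sigma')\leq\abs{\chi(\Sigma)}+2$; similarly $\#c(\sigma)\leq\abs{\chi(\Sigma)}$, so your bound in (b) can be taken as $\abs{\chi(\Sigma)}!$ rather than $(3g+p-3)!$ (both work). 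With these corrections, your argument goes through and is essentially the paper's.
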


We will need the following Lemmas for the proof. Recall that for every component $\Sigma' \in c(\sigma)$, $\sigma \subset \mc{C}(\Sigma)$, we obtain a monomorphism $\iota_{\Sigma'} \colon \pi_1(\Sigma') \hookrightarrow \pi_1(\Sigma)$ induced by the inclusion $\Sigma' \subset \Sigma$, which is well-defined up to conjugation; see Remark \ref{rem:DependenceOnComponent}.

\begin{lemma} \label{lem:ConvexCoreSubsurface}
	Let $\rho \in \Rep(\Sigma)$, let $\sigma \subset \mc{C}(\Sigma)$ be a simplex in the curve complex, let $\Sigma' \in c(\sigma)$ be a component, and let $\iota_{\Sigma'} \colon \pi_1(\Sigma') \hookrightarrow \pi_1(\Sigma)$ be an inclusion monomorphism. Denote $\Gamma = \im \rho$, $\Gamma' = \im (\rho \circ \iota_{\Sigma'})$ and let $\pi \colon \HH^2 \longrightarrow \Gamma \backslash \HH^2$ be the quotient map. Let $f \colon \Sigma \longrightarrow \Gamma \backslash \HH^2$ be an orientation preserving homeomorphism whose holonomy is $\rho$ such that $f(\sigma) = \tau$ is a collection of closed geodesics.
	
	Then $\tilde{\tau} \coloneqq \pi^{-1}(\tau) \subset \HH^2$ is a collection of disjoint geodesics and $\tilde{C}(\Gamma') \subseteq \HH^2$ is the closure of a connected component of $\HH^2 \setminus \tilde{\tau}$.
\end{lemma}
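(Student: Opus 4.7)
The first claim is essentially free: each curve in $\tau$ is a simple closed geodesic in the hyperbolic surface $\Gamma\backslash\HH^2$, so its $\pi$-preimage is a disjoint union of complete geodesics in $\HH^2$, and distinct curves in $\tau$ being disjoint their preimages are disjoint as well. Hence $\tilde{\tau}=\pi^{-1}(\tau)$ is a pairwise-disjoint family of complete geodesics.

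For the second claim the plan is to take $\tilde{X}'$ to be the very component of $\HH^2\setminus\tilde{\tau}$ already built in the proof of \ref{propdefn:RestrictionMaps}. Concretely, pick a lift $\tilde{f}\colon\tilde{\Sigma}\to\HH^2$ of $f$, let $\tilde{\Sigma}'\subseteq\tilde{\Sigma}$ be the component singled out by the chosen inclusion $\iota_{\Sigma'}\colon\pi_1(\Sigma')\hookrightarrow\pi_1(\Sigma)$, and set $\tilde{X}':=\tilde{f}(\tilde{\Sigma}')$. Three facts from the proof of \ref{propdefn:RestrictionMaps} will then be used without rederivation: (a) $\tilde{X}'$ is open and convex, being a connected component of the complement of a disjoint family of complete geodesics; (b) $\tilde{X}'$ is $\Gamma'$-invariant and $\Gamma'\backslash\tilde{X}'\cong\Sigma'$; and (c) every component of $\HH^2\setminus\overline{\tilde{X}'}$ is an open half-plane $H_j$ whose bordering geodesic is the axis of a primitive hyperbolic element $\gamma_j\in\Gamma'$, with $\Gamma'\backslash\bigsqcup_j H_j$ a disjoint union of hyperbolic funnels.

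It then remains to verify $\tilde{C}(\Gamma')=\overline{\tilde{X}'}$. One direction is soft: by (a) and (b), $\overline{\tilde{X}'}$ is closed, convex, and $\Gamma'$-invariant, so for any basepoint $o\in\tilde{X}'$ the orbit $\Gamma'\cdot o$ lies in $\tilde{X}'$, whence $L(\Gamma')=\overline{\Gamma'\cdot o}\cap\partial\HH^2\subseteq\overline{\tilde{X}'}\cap\partial\HH^2$; taking the hyperbolic convex hull gives $\tilde{C}(\Gamma')=\conv(L(\Gamma'))\cap\HH^2\subseteq\overline{\tilde{X}'}$. For the reverse inclusion I would invoke (c): $\Gamma'\backslash\HH^2$ decomposes as the union of the finite-area subsurface $\Gamma'\backslash\tilde{X}'\cong\Sigma'$ and the disjoint funnels $\Gamma'\backslash\bigsqcup_j H_j$, so the convex core of $\Gamma'\backslash\HH^2$ (obtained by deleting the funnels) is exactly $\Gamma'\backslash\overline{\tilde{X}'}$. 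Thus $\tilde{C}(\Gamma')\subseteq\overline{\tilde{X}'}$ are two $\Gamma'$-invariant closed subsets projecting to the same subset of $\Gamma'\backslash\HH^2$, which forces $\tilde{C}(\Gamma')=\overline{\tilde{X}'}$.

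The main subtlety is the structural statement (c), borrowed from the proof of \ref{propdefn:RestrictionMaps}: without it one could worry that the limit set $L(\Gamma')$ sneaks past the bounding geodesics of $\tilde{X}'$ into some $H_j$ and forces $\tilde{C}(\Gamma')$ to extend beyond $\overline{\tilde{X}'}$. The identification of each cyclic quotient $\langle\gamma_j\rangle\backslash H_j$ with a hyperbolic funnel precludes this and pins down $\overline{\tilde{X}'}$ as the minimal closed convex $\Gamma'$-invariant subset of $\HH^2$, namely $\tilde{C}(\Gamma')$.
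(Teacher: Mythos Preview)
Your argument is correct and takes a route genuinely different from the paper's. Both proofs set $\tilde{X}' = \tilde{f}(\tilde{\Sigma}')$ and aim for $\tilde{C}(\Gamma') = \overline{\tilde{X}'}$, and both obtain the inclusion $\tilde{C}(\Gamma') \subseteq \overline{\tilde{X}'}$ cheaply from convexity and $\Gamma'$-invariance. The divergence is in the reverse inclusion. The paper works directly on $\partial \HH^2$: it shows that every ideal boundary point $\xi$ of $\tilde{X}'$ lies in $L(\Gamma')$ by taking a geodesic ray in $\tilde{X}'$ toward $\xi$, noting that (unless $\xi$ is already a parabolic fixed point) the ray must exit every cusp horoball infinitely often, and then using that $\Gamma'$ acts coboundedly on $\tilde{X}'$ minus the horoballs to exhibit $\xi$ as an orbit accumulation point. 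You instead descend to the quotient and recycle the funnel decomposition already established in the proof of Proposition and Definition~\ref{propdefn:RestrictionMaps}: since $\rho \in \Rep(\Sigma)$, the piece $\Gamma' \backslash \tilde{X}'$ sits inside the finite-area surface $\Gamma \backslash \HH^2$ and hence has finite area itself, so every infinite-area end of $\Gamma' \backslash \HH^2$ must be one of the funnels $\langle \gamma_j \rangle \backslash H_j$; the convex core being the complement of the funnels then gives $C(\Gamma') = \Gamma' \backslash \overline{\tilde{X}'}$, and you lift using $\Gamma'$-invariance. Your version is more economical and leans on the hypothesis $\rho \in \Rep(\Sigma)$ in an essential way; the paper's ray argument is more self-contained and would go through even for $\rho \in \Rep^*(\Sigma)$, where the finite-area shortcut is unavailable.
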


\begin{proof}
	Let $\tilde{f} \colon \tilde{\Sigma} \cong \HH^2 \longrightarrow \HH^2$ be a lift of $f \colon \Sigma \longrightarrow \Gamma \backslash \HH^2$. Let $\tilde{\Sigma}' \subset \HH^2 \setminus \tilde{\sigma}$ be a connected component over $\Sigma'$ such that the inclusion $ \tilde{\Sigma}' \hookrightarrow \tilde{\Sigma}\cong \HH^2$ is $\iota_{\Sigma'}$-equivariant; cf.\ Proposition and Definition \ref{propdefn:RestrictionMaps}. We set $\tilde{X}' \coloneqq \tilde{f}(\tilde{\Sigma}')$, and we want to show that $\tilde{C}(\Gamma') = \ol{\tilde{X}'}$.
	We will do so by showing that
	\[ \partial \tilde{X}' \subseteq L(\Gamma') \subseteq \ol{\partial \tilde{X}'}. \]
	
	Note that $\tilde{X}'$ is $\Gamma' = \rho(\iota_{\Sigma'}(\pi_1(\Sigma')))$-invariant. Therefore $\ol{\partial \tilde{X}'}$ is a closed $\Gamma'$-invariant subset of $\partial \HH^2$ that must contain the limit set $L(\Gamma')$ because the limit set is the smallest such subset.
	
	Let $\xi \in \partial \tilde{X}'$. If $\xi$ is fixed by a parabolic element $\eta \in \Gamma'$ then 
	\[\xi = \lim\limits_{n \to \infty} \eta^n o \in L(\Gamma'), \qquad o \in \HH^2.\]
	
	Hence, let us assume that $\xi$ is not fixed by any parabolic element in $\Gamma'$.
	Let $\gamma \subset \tilde{X}'$ be a geodesic from $\tilde{p} = \gamma(0)$ to $\xi = \gamma(\infty)$. Further, let $\{ P_j \}_{j \in \NN}$ be a system of disjoint horoballs centered at all the fixed points $\{\xi_j\}_{j \in \NN}$ of parabolic elements in $\Gamma'$. Then there is a sequence $(t_n)_{n \in \NN}$ such that $t_n \to \infty$ as $n \to \infty$ and $\gamma(t_n) \notin \bigsqcup_{j \in \NN} P_j$. Indeed, otherwise there would be a $T >0$ and $j_0 \in \NN$ such that $\gamma(t) \in P_{j_0}$ for all $t \geq T$. This in turn would imply that $\gamma(\infty) = \xi_{j_0} = \xi$; contradicting our assumption.
	
	Observe that $\Gamma'$ acts coboundedly on $\tilde{X}' \setminus \bigsqcup_{j \in \NN} P_j$. Therefore, there is $r>0$, $o \in \tilde{X}' \setminus \bigsqcup_{j \in \NN} P_j$, and $\gamma_n \in \Gamma'$, such that $d(\gamma(t_n),\gamma_n \cdot o) \leq r$ for all $n \in \NN$. Hence, 
	\[ \xi = \lim\limits_{n\to \infty} \gamma(t_n) = \lim\limits_{n \to \infty } \gamma_n \cdot o \in L(\Gamma'). \]
\end{proof}

\begin{lemma}\label{lem:SplittingFundDomains}
	Let $\rho \in \Rep(\Sigma)$ and let $\sigma \subset C(\Sigma)$ be a simplex in the curve complex. Further, let $\iota_{\Sigma'} \colon \pi_1(\Sigma') \hookrightarrow \pi_1(\Sigma)$ be an inclusion monomorphism for every component $\Sigma' \in c(\sigma)$. Denote $\Gamma = \im \rho$ and $\Gamma(\Sigma') = \im (\rho \circ \iota_{\Sigma'})$ for every $\Sigma' \in c(\sigma)$. Let $\{p(\Sigma') \in \HH^2 \, | \, \Sigma' \in c(\sigma)\}$ be a collection of points. Then
	\[ \bigcup_{\Sigma' \in c(\sigma)} \hat{D}_{p(\Sigma')}(\Gamma(\Sigma')) \]
	is a fundamental domain for the action of $\Gamma$ on $\HH^2$, and $\hat{D}_{p(\Sigma')}(\Gamma(\Sigma')) \cap \hat{D}_{p(\Sigma'')}(\Gamma(\Sigma''))$ has measure zero for distinct $\Sigma', \Sigma'' \in c(\sigma)$.
\end{lemma}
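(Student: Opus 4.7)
The plan is to identify each truncated Dirichlet domain $\hat{D}_{p(\Sigma')}(\Gamma(\Sigma'))$ with a fundamental domain for $\Gamma(\Sigma')$ acting on the $\Gamma(\Sigma')$-invariant region $\tilde{C}(\Gamma(\Sigma'))$, and then assemble these pieces using a $\Gamma$-equivariant tiling of $\HH^2$ whose tiles are indexed by $c(\sigma)$. First I would fix an orientation preserving homeomorphism $f \colon \Sigma \longrightarrow \Gamma \backslash \HH^2$ with holonomy $\rho$, arranged so that $\tau \coloneqq f(\sigma)$ is a geodesic multicurve, and set $\tilde{\tau} \coloneqq \pi^{-1}(\tau) \subset \HH^2$ where $\pi \colon \HH^2 \longrightarrow \Gamma \backslash \HH^2$ is the quotient map. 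By Lemma \ref{lem:ConvexCoreSubsurface}, for each $\Sigma' \in c(\sigma)$ the convex core $\tilde{C}(\Gamma(\Sigma'))$ coincides with the closure of a specific connected component $\tilde{X}_{\Sigma'}$ of $\HH^2 \setminus \tilde{\tau}$ lying over $\Sigma'$, where the precise component depends on the choice of $\iota_{\Sigma'}$.

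Next I would observe that $\Gamma$ acts on the set of connected components of $\HH^2 \setminus \tilde{\tau}$, and that two components lie in the same $\Gamma$-orbit if and only if they project to the same element of $c(\sigma)$; this is immediate from the identification $\Sigma \setminus \sigma \cong \Gamma \backslash (\HH^2 \setminus \tilde{\tau})$. Hence $\{\tilde{X}_{\Sigma'}\}_{\Sigma' \in c(\sigma)}$ contains exactly one representative from each $\Gamma$-orbit of such components, and the stabilizer of $\tilde{X}_{\Sigma'}$ in $\Gamma$ is precisely $\Gamma(\Sigma')$. In particular, the $\Gamma$-translates of $\bigsqcup_{\Sigma' \in c(\sigma)} \tilde{X}_{\Sigma'}$ exhaust $\HH^2 \setminus \tilde{\tau}$ and therefore cover $\HH^2$ up to the set $\tilde{\tau}$, which has two-dimensional measure zero.

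Since $D_{p(\Sigma')}(\Gamma(\Sigma'))$ is a fundamental domain for $\Gamma(\Sigma') \acts \HH^2$ and $\tilde{C}(\Gamma(\Sigma'))$ is $\Gamma(\Sigma')$-invariant, the intersection
\[ \hat{D}_{p(\Sigma')}(\Gamma(\Sigma')) = D_{p(\Sigma')}(\Gamma(\Sigma')) \cap \tilde{C}(\Gamma(\Sigma')) \]
is a fundamental domain for $\Gamma(\Sigma') \acts \tilde{C}(\Gamma(\Sigma'))$. Combining this with the tiling from the previous paragraph shows that $\bigcup_{\Sigma' \in c(\sigma)} \hat{D}_{p(\Sigma')}(\Gamma(\Sigma'))$ contains exactly one $\Gamma$-representative of almost every point of $\HH^2$, establishing the fundamental domain claim.

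For the measure-zero intersection statement, I would note that for distinct $\Sigma', \Sigma'' \in c(\sigma)$ the interiors $\tilde{X}_{\Sigma'}$ and $\tilde{X}_{\Sigma''}$ are disjoint components of $\HH^2 \setminus \tilde{\tau}$, so $\hat{D}_{p(\Sigma')}(\Gamma(\Sigma')) \cap \hat{D}_{p(\Sigma'')}(\Gamma(\Sigma''))$ is contained in the measure-zero set $\tilde{\tau}$. The only delicate point is that the inclusion monomorphism $\iota_{\Sigma'}$ is defined only up to conjugation in $\pi_1(\Sigma)$ (Remark \ref{rem:DependenceOnComponent}); however, different choices merely replace $\tilde{X}_{\Sigma'}$ by a $\Gamma$-translate, which remains a valid orbit representative, so the entire argument goes through regardless of the chosen representatives.
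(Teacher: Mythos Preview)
Your proposal is correct and follows essentially the same argument as the paper: both use Lemma \ref{lem:ConvexCoreSubsurface} to identify $\tilde{C}(\Gamma(\Sigma'))$ with the closure of a component of $\HH^2 \setminus \tilde{\tau}$, observe that the $\Gamma$-translates of these closures cover $\HH^2$, and then assemble the truncated Dirichlet domains into a fundamental domain, with the pairwise intersections lying in $\tilde{\tau}$. Your write-up is in fact slightly more explicit than the paper's about the orbit/stabilizer structure and about the harmless effect of the conjugacy ambiguity from Remark \ref{rem:DependenceOnComponent}.
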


\begin{proof}
	For simplicity we enumerate $\{ \Sigma'_i \colon i = 1, \ldots, l \} = c(\sigma)$ and set $p_i= p(\Sigma'_i)$, $\Gamma(\Sigma'_i) = \Gamma'_i$ for every $i= 1, \ldots, l$. Further, denote by $q \colon \tilde{\Sigma} \longrightarrow \Sigma$ and $\pi \colon \HH^2 \longrightarrow \Gamma \backslash \HH^2$ the usual universal coverings, and let $f \colon \Sigma \longrightarrow \Gamma \backslash \HH^2$ be an orientation preserving homeomorphism with holonomy $\rho$ mapping $\sigma$ to a system of closed geodesics $f(\sigma) = \tau$. Let $\tilde{f} \colon \tilde{\Sigma} \longrightarrow \HH^2$ be a lift of $f$ to the universal cover. We set $\tilde{\sigma} \coloneqq q^{-1}(\sigma)$ and $\tilde{\tau} \coloneqq \pi^{-1}(\tau) = \tilde{f}(\tilde{\sigma})$. Let $\tilde{\Sigma}'_i \subset \tilde{\Sigma} \setminus \tilde{\sigma}$ be a connected component such that the inclusion $\tilde{\Sigma}'_i \hookrightarrow \tilde{\Sigma}$ is $\iota_{\Sigma'_i}$-equivariant. Because $\tilde{\Sigma} = \bigcup_{i =1}^l \pi_1(\Sigma) \cdot \ol{\tilde{\Sigma}'_i}$ we have that $\HH^2 =\bigcup_{i =1}^l \Gamma \cdot \ol{f(\tilde{\Sigma}'_i)} $. By Lemma \ref{lem:ConvexCoreSubsurface} $\ol{f(\tilde{\Sigma}'_i)} = \tilde{C}(\Gamma_i)$ such that
	\[\HH^2= \bigcup_{i=1}^l \Gamma \cdot \tilde{C}(\Gamma_i).\]
	Because $\hat{D}_{p_i}(\Gamma'_i)$ is a fundamental domain for the $\Gamma'_i$-action on $\tilde{C}(\Gamma'_i)$ it is readily verified that $\bigcup_{i=1}^l \hat{D}_{p_i}(\Gamma'_i)$ is a fundamental domain for the $\Gamma$-action on $\HH^2$.
	
	Finally, $\hat{D}_{p_i}(\Gamma'_i) \cap \hat{D}_{p_j}(\Gamma'_j) \subseteq \tilde{\tau}$ for every $i \neq j$ which has measure zero.
\end{proof}

\begin{lemma} \label{lem:FullGroupConv}
	Let $(\rho_n)_{n \in \NN} \subset \Rep^*(\Sigma)$, let $\sigma \subset C(\Sigma)$, let $\Sigma' \in c(\sigma)$ be a component, let $\iota_{\Sigma'} \colon \pi_1(\Sigma') \hookrightarrow \pi_1(\Sigma)$ be an inclusion monomorphism, and suppose that
	\[ \rho_n \circ \iota_{\Sigma'} \to \rho' \in \Rep(\Sigma') \qquad (n \to \infty).\]
	Denote $\Gamma_n = \im \rho_n$, $\Gamma_n(\Sigma') = \im (\rho_n \circ \iota_{\Sigma'})$ and $\Gamma' = \im \rho' \cong \pi_1(\Sigma')$.
	
	Then
	\[ \Gamma_n \to \Gamma' \qquad (n \to \infty). \]
\end{lemma}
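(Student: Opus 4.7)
My plan is to verify the two Chabauty conditions (C1) and (C2) from Proposition~\ref{prop:charact_con_Chab} for the convergence $\Gamma_n \to \Gamma'$. Condition (C1) is immediate: for $h = \rho'(\gamma') \in \Gamma'$ with $\gamma' \in \pi_1(\Sigma')$, the sequence $h_n \coloneqq \rho_n(\iota_{\Sigma'}(\gamma')) \in \Gamma_n(\Sigma') \subseteq \Gamma_n$ converges to $h$ by the algebraic convergence of $\rho_n \circ \iota_{\Sigma'}$ to $\rho'$.

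For (C2), suppose $\gamma_{n_k} \in \pi_1(\Sigma)$ with $h_{n_k} \coloneqq \rho_{n_k}(\gamma_{n_k}) \to h \in G$. After passing to a subsequence I distinguish two cases. If $\gamma_{n_k} \in \iota_{\Sigma'}(\pi_1(\Sigma'))$ for all $k$, then $h_{n_k} \in \Gamma_{n_k}(\Sigma')$; since Proposition~\ref{prop:geometric_equals_algebraic} combined with the hypothesis gives $\Gamma_n(\Sigma') \to \Gamma'$ in the Chabauty topology, condition (C2) for that convergence already forces $h \in \Gamma'$. Otherwise $\gamma_{n_k} \notin \iota_{\Sigma'}(\pi_1(\Sigma'))$ for all $k$, and I derive a contradiction. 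For such $\gamma$, the lifts $\tilde{\Sigma}'$ and $\gamma \tilde{\Sigma}'$ in $\tilde{\Sigma}$ have disjoint interiors, so by Lemma~\ref{lem:ConvexCoreSubsurface} the convex cores $\tilde{C}(\Gamma_n(\Sigma'))$ and $\rho_n(\gamma) \tilde{C}(\Gamma_n(\Sigma'))$ are also disjoint in interior. Hence, for $o$ in the interior of $\tilde{C}(\Gamma_n(\Sigma'))$,
\[ d(\rho_n(\gamma) o, o) \geq d(o, \partial \tilde{C}(\Gamma_n(\Sigma'))), \]
and the contradiction with the boundedness of $d(h_{n_k} o, o)$ reduces to the Key Estimate: there exists $o \in \HH^2$ such that $d(o, \partial \tilde{C}(\Gamma_n(\Sigma'))) \to \infty$ as $n \to \infty$.

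For the Key Estimate I would choose $o$ so that its image in $\Gamma' \backslash \HH^2$ lies deep in the thick part, bounded away from all cusps. By Remark~\ref{rem:PeripheralsParabolic} every peripheral element of $\pi_1(\Sigma')$ is parabolic under $\rho'$, so the algebraic convergence forces $\ell(\rho_n(\iota_{\Sigma'}(\alpha))) \to 0$ for every peripheral $\alpha$; hence every boundary closed geodesic of the convex core $C_n = \Gamma_n(\Sigma') \backslash \tilde{C}(\Gamma_n(\Sigma'))$ has length tending to zero, and by Lemma~\ref{lem:CollarLemma} admits an embedded collar of width tending to infinity. Proposition~\ref{prop:ChabGeomCharact}, applied to $\Gamma_n(\Sigma') \to \Gamma'$, furnishes on arbitrarily large balls around $o$ a diffeomorphism $C^\infty$-close to the identity intertwining the quotient maps, so the image of $o$ in $\Gamma_n(\Sigma') \backslash \HH^2$ remains in the thick part and thereby outside every short-curve collar for $n$ large. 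Consequently the distance from this image to $\partial C_n$ tends to infinity, and lifting through the covering $\HH^2 \to \Gamma_n(\Sigma') \backslash \HH^2$ yields the Key Estimate. The main obstacle is bundling the per-peripheral-element collar estimates into a single uniform bound valid across the infinitely many axes that make up $\partial \tilde{C}(\Gamma_n(\Sigma'))$; the thin-thick decomposition handles this at one stroke, since every short closed geodesic of $\Gamma_n(\Sigma') \backslash \HH^2$ is forced into the thin part while Proposition~\ref{prop:ChabGeomCharact} keeps a growing neighborhood of the image of $o$ inside the thick part.
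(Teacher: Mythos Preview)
Your argument is correct in outline, but it takes a genuinely different route from the paper's proof. Both proofs handle (C1) identically and reduce (C2) to the dichotomy ``either $\gamma_{n_k}\in\iota_{\Sigma'}(\pi_1(\Sigma'))$ eventually, or derive a contradiction''. The divergence is in how the contradiction is obtained.

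The paper argues purely in terms of intersection numbers: it first observes that $\alpha\in\pi_1(\Sigma)$ lies in $\iota_{\Sigma'}(\pi_1(\Sigma'))$ if and only if $i(\alpha,c)=0$ for every peripheral curve $c$ of $\Sigma'$ coming from $\sigma$. So if $\alpha_{n_k}\notin\iota_{\Sigma'}(\pi_1(\Sigma'))$ one finds (after passing to a subsequence) a single primitive peripheral $c'$ with $i(c',\alpha_{n_k})>0$ for all $k$. Since $\rho'\in\Rep(\Sigma')$ forces $\ell(\rho_{n_k}(\iota_{\Sigma'}(c')))\to 0$, the Collar Lemma~\ref{lem:CollarLemma} applied \emph{directly in $\Sigma$} to the pair $(c',\alpha_{n_k})$ gives $\ell(\rho_{n_k}(\alpha_{n_k}))\to\infty$, contradicting convergence. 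This is a three-line argument once the intersection-number characterization is in hand.

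Your route is more geometric: you pass to convex cores via Lemma~\ref{lem:ConvexCoreSubsurface}, show that $\rho_n(\gamma)$ moves $\tilde C(\Gamma_n(\Sigma'))$ off itself whenever $\gamma\notin\iota_{\Sigma'}(\pi_1(\Sigma'))$, and then prove the Key Estimate $d(o,\partial\tilde C(\Gamma_n(\Sigma')))\to\infty$ using the thick--thin decomposition together with Proposition~\ref{prop:ChabGeomCharact}. This works, and it is a natural picture, but it invokes considerably more machinery (convex cores, Chabauty geometric convergence, Margulis tubes) to reach what the paper gets from a single application of the Collar Lemma. One small caveat: Lemma~\ref{lem:ConvexCoreSubsurface} as stated assumes $\rho\in\Rep(\Sigma)$, whereas here $\rho_n\in\Rep^*(\Sigma)$; you only need the inclusion $\tilde C(\Gamma_n(\Sigma'))\subseteq\overline{\tilde X'_n}$, which does hold in the $\Rep^*$ setting, but you should say so explicitly rather than cite the lemma verbatim.
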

\begin{proof}
	We shall check (C1) and (C2) of Proposition \ref{prop:charact_con_Chab}.
	\begin{enumerate}
		\item[(C1)] Let $\gamma' = \rho(c) \in \Gamma' = \im \rho'$. Then $\rho_n(\iota_{\Sigma'}(c)) \in \Gamma_n(\Sigma') \subseteq \Gamma_n$ converges to $\gamma'$ as $n\to \infty$.
		
		\item[(C2)] Let $(\gamma_{n_k})_{k \in \NN}$ be a convergent sequence with limit $g \in G$ and $\gamma_{n_k} \in \Gamma_{n_k}$. We need to show that $g \in \Gamma'$. By Proposition \ref{prop:geometric_equals_algebraic} we know that $\Gamma_{n}(\Sigma') = \im (\rho_n \circ \iota_{\Sigma'}) \to \Gamma'$. Thus it will be sufficient to prove that $\gamma_{n_k} \in \Gamma_{n_k}(\Sigma')$ for large $k$.
		
		Observe that an element $\alpha \in \pi_1(\Sigma)$ is in $\pi_1(\Sigma')$ if and only if $i(\alpha, c) = 0$ for every peripheral curve $c \in \pi_1(\Sigma')$. Indeed, if $\alpha$ is in $\pi_1(\Sigma')$ then clearly $i(\alpha,c) = 0$ for every peripheral curve $c \in \pi_1(\Sigma')$. Vice versa, let $\{c_1, \ldots, c_r\} \subset \pi_1(\Sigma')$ be primitive peripheral elements corresponding to the punctures of $\Sigma'$. If $i(\alpha,c_j) = 0$ then $\alpha$ and $c_j$ form a bigon for every $j=1,\ldots,r$. Now, we may homotope $\alpha$ by pushing these bigons inside of $\Sigma'$ such that $\alpha \in \pi_1(\Sigma')$.
		
		Suppose that there is a (further) subsequence such that $\gamma_{n_k} = \rho_{n_k}(\alpha_{n_k}) \notin \Gamma_{n_k}(\Sigma')$ for every $k \in \NN$. Then, for every $k \in \NN$, there is a primitive peripheral element $c'_k \in \pi_1(\Sigma')$ such that $i(c'_k,\alpha_{n_k}) \neq 0$. Up to passing to a further subsequence we may assume that $c'_k = c'$ is constant.
		
		Recall that $\rho' \in \Rep(\Sigma')$ such that peripheral elements are parabolic; see Remark \ref{rem:PeripheralsParabolic}. Thus $\ell(\rho_{n_k}(c')) \to 0 $ as $k \to \infty$. But then 
		\[ \ell(\rho_{n_k}(\alpha_{n_k})) \to \infty \quad (k \to \infty) \]
		by the Collar Lemma \ref{lem:CollarLemma}, contradicting the convergence $\gamma_{n_k} \to g$ as $k \to \infty$.
	\end{enumerate}
\end{proof}

After these preparations, we are ready to prove Theorem \ref{thm:main}.

\begin{proof}[Proof of Theorem \ref{thm:main}]
	Let $([\rho_{\Sigma'}])_{\Sigma' \in c(\sigma)} \in \Teich_\sigma(\Sigma) \subset \augTeich(\Sigma)$, $\sigma \subset \mc{C}(X)$. For each component $\Sigma' \in c(\sigma)$ the measure
	$ \mu_{\im \rho_{\Sigma'}}$ is an invariant random subgroup, since $[\rho_{\Sigma'}] \in \Teich(\Sigma')$. In order to verify that $\Phi$ is well-defined, we only need to check that 
	\[\tilde{\Phi}( ([\rho_{\Sigma'}])_{\Sigma' \in c(\sigma)} ) = \sum_{\Sigma' \in c(\sigma)} \frac{\chi(\Sigma')}{\chi(\Sigma)} \cdot \mu_{\im \rho_{\Sigma'}}\]
	is a convex combination. Observe that $\chi(\Sigma)<0$, and $\chi(\Sigma')<0$ for every $\Sigma' \in c(\sigma)$, such that
	\[ \frac{\chi(\Sigma')}{\chi(\Sigma)} > 0. \]
	Moreover, by the inclusion-exclusion principle, we have that
	\[ \chi(\Sigma) = \sum_{\Sigma' \in c(\sigma)} \chi(\Sigma'), \]
	whence
	\[ \sum_{\Sigma' \in c(\sigma)} 
	\frac{\chi(\Sigma')}{\chi(\Sigma)} =1. \]
	
	Let us prove that $\tilde{\Phi}$ is continuous. We will first prove this for a sequence
	$([\rho_n])_{n \in \NN} \subset \Teich(\Sigma)$ converging to $\mf{r} = ([\rho_{\Sigma'}])_{\Sigma' \in c(\sigma)}\in \Teich_\sigma(\Sigma) \subset \augTeich(\Sigma), \sigma \subset \mc{C}(\Sigma)$.
	By definition of the topology of $\augTeich(\Sigma)$ we know that for every $\Sigma' \in c(\sigma)$ we have $[\rho_n \circ \iota_{\Sigma'}] \to [\rho_{\Sigma'}]$ as $n \to \infty$, i.e.\ there are $g_n(\Sigma') \in G$ such that
	\[ g_n(\Sigma')^{-1}  \cdot  (\rho_n \circ \iota_{\Sigma'} ) \cdot g_n(\Sigma')\to \rho_{\Sigma'} \qquad (n \to \infty).\]
	In particular,
	\[ g_n(\Sigma')^{-1}  \cdot  \Gamma_n(\Sigma') \cdot g_n(\Sigma')\to \Gamma(\Sigma') \qquad (n \to \infty)\]
	where we set $\Gamma_n(\Sigma') \coloneqq \im (\rho_n \circ \iota_{\Sigma'} )$, $\Gamma(\Sigma') \coloneqq \im (\rho \circ \iota_{\Sigma'})$.
	
	Let $o \in \HH^2$. By Lemma \ref{lem:SplittingFundDomains} the set 
	\[ D_n := \bigcup_{\Sigma' \in c(\sigma)} \hat{D}_{g_n(\Sigma')o}(\Gamma_n(\Sigma')) \]
	is a fundamental domain for the action of $\Gamma_n = \im \rho_n$ on $\HH^2$. 
	
	Let $f \in C(\Sub(G))$. We have that
	\begin{align*}
	\int_{\Sub(G)} f(H) \, d\mu_{\Gamma_n}(H) 
	&= \nu(p^{-1}(D_n))^{-1} \cdot \int_{G} \II_{D_n}(go) \cdot f(g^{-1} \Gamma_n g) \, d\nu(g).
	\end{align*}
	Observe that $\nu(p^{-1}(D_n))= v(D_n) = 2 \pi \abs{\chi(\Sigma)}$. Further,
	\begin{align*}
	\int_{G} \II_{D_n}(go) \cdot f(g^{-1} \Gamma_n g) \, d\nu(g)
	&= \sum_{\Sigma' \in c(\sigma)} \int_{G}  \II_{\hat{D}_{g_n(\Sigma')o}(\Gamma_n(\Sigma'))}(go) \cdot f(g^{-1} \Gamma_n g) \, d\nu(g).
	\end{align*}
	Let $\Sigma' \in c(\sigma)$. Then
	\begin{align*}
	&\quad \int_{G}  \II_{\hat{D}_{g_n(\Sigma')o}(\Gamma_n(\Sigma'))}(go) \cdot f(g^{-1} \Gamma_n g) \, d\nu(g) \\
	&= \int_{G}  \II_{\hat{D}_{g_n(\Sigma')o}(\Gamma_n(\Sigma'))}(g_n(\Sigma')go) \cdot f(g^{-1} g_n(\Sigma')^{-1} \Gamma_ng_n(\Sigma') g) \, d\nu(g)\\ 
	&= \int_{G}  \II_{\hat{D}_{o}(g_n(\Sigma')^{-1}\Gamma_n(\Sigma')g_n(\Sigma'))}(go) \cdot f(g^{-1} g_n(\Sigma')^{-1} \Gamma_ng_n(\Sigma') g) \, d\nu(g)\\
	&= \int_{G}  \II_{\hat{F}_{o}(g_n(\Sigma')^{-1}\Gamma_n(\Sigma')g_n(\Sigma'))}(g) \cdot \bar{f}_{n,\Sigma'}(g) \, d\nu(g),\\
	\end{align*}
	where we used the left-invariance of the Haar measure $\nu$, that $\hat{D}_{o}(g_n(\Sigma')^{-1}\Gamma_n(\Sigma')g_n(\Sigma')) = g_n(\Sigma')^{-1} \cdot \hat{D}_{g_n(\Sigma')o}(\Gamma_n(\Sigma'))$, and set
	\[ \bar{f}_{n,\Sigma'}(g) \coloneqq f(g^{-1} g_n(\Sigma')^{-1} \Gamma_ng_n(\Sigma') g) \qquad \forall g \in G. \]
	Note that $\|\bar{f}_{n,\Sigma'}\|_{L^\infty} \leq \|f\|_\infty < \infty$, $n \in \NN$, and
	\[ g_n(\Sigma')^{-1} \cdot  \Gamma_n \cdot g_n(\Sigma')\to \Gamma(\Sigma') \quad (n\to \infty) \]
	by Lemma \ref{lem:FullGroupConv}. Thus, if we set
	\[ \bar{f}_{\Sigma'}(g) \coloneqq f(g^{-1}  \Gamma(\Sigma')  g) \qquad \forall g \in G, \]
	then $\bar{f}_{n, \Sigma'}(g) \to \bar{f}(g)$ as $n \to \infty$ for every $g \in G$, by continuity. Moreover,
	\[\left \| \II_{\hat{F}_{o}(g_n(\Sigma')^{-1}\Gamma_n(\Sigma')g_n(\Sigma'))} - \II_{\hat{F}_o(\Gamma(\Sigma'))} \right \|_{L^1(g,\nu)} \to 0 \quad (n\to \infty)\]
	by Lemma \ref{lem:L1ConvTruncDom}. It follows that
	\[\int_{G}  \II_{\hat{F}_{o}(g_n(\Sigma')^{-1}\Gamma_n(\Sigma')g_n(\Sigma'))}(g) \cdot \bar{f}_{n,\Sigma'}(g) \, d\nu(g)
	\to 
	\int_{G}  \II_{\hat{F}_{o}(\Gamma(\Sigma'))}(g) \cdot \bar{f}_{\Sigma'}(g) \, d\nu(g)\]
	as $n \to \infty$, by Lemma \ref{lem:ProdConvL1}.

	All in all, we obtain that the integral
	\[\int_{\Sub(G)} f(H) \, d\mu_{\Gamma_n}(H)\]
	tends to
	\begin{align*}
	&\quad (2\pi \abs{\chi(\Sigma)})^{-1} \sum_{\Sigma' \in c(\sigma)} \int_G \II_{\hat{F}_o(\Gamma(\Sigma'))}(g) \cdot f(g^{-1} \Gamma(\Sigma') g) \, d\nu(g)\\
	&= \sum_{\Sigma' \in c(\sigma)} \frac{2\pi \abs{\chi(\Sigma')}}{2 \pi \abs{\chi(\Sigma)}} \cdot \nu(\hat{F}_o(\Gamma(\Sigma')))^{-1} \int_G \II_{\hat{F}_o(\Gamma(\Sigma'))}(g) \cdot f(g^{-1} \Gamma(\Sigma') g) \, d\nu(g)\\
	&= \sum_{\Sigma' \in c(\sigma)} \frac{\chi(\Sigma')}{\chi(\Sigma)} \cdot \int_{G} f(H) \, d\mu_{\im \rho_{\Sigma'}}(H) 
	= \int_G f(H) \, d\tilde{\Phi}(\mf{r})
	\end{align*}
	as $n \to \infty$.
	
	In general, let $\mf{r}_n = ([\rho^{(n)}_{\Sigma''}])_{\Sigma'' \in c(\sigma_n)} \subset \augTeich(\Sigma)$ converge to $\mf{r} = ([\rho_{\Sigma'}])_{\Sigma' \in c(\sigma)}$ as $n \to \infty$. Then $\sigma_n \subseteq \sigma$ for large $n$. Because the simplex $\sigma$ has only finitely many faces we may assume without loss of generality\footnote{Just pass to a subsequence and treat every face separately.} that  $\sigma_n = \sigma'$ for large $n$. Applying our previous discussion to every component $\Sigma'' \in c(\sigma')$ we obtain
	\begin{align*}
	&\quad \int_{\Sub(G)} f(H) \, d\tilde{\Phi}(\mf{r})(H) \\
	&= \sum_{\Sigma' \in c(\sigma)} \frac{\chi(\Sigma')}{\chi(\Sigma)} \int_{\Sub(G)} f(H) \, d\mu_{\im \rho_{\Sigma'}}(H) \\
	&= \sum_{\Sigma'' \in c(\sigma')} \frac{\chi(\Sigma'')}{\chi(\Sigma)} \sum_{\substack{\Sigma' \in c(\sigma) \\ \Sigma' \subseteq \Sigma''}}
	\frac{\chi(\Sigma')}{\chi(\Sigma'')} \int_{\Sub(G)} f(H) \, d\mu_{\im \rho_{\Sigma'}}(H) \\
	&= \sum_{\Sigma'' \in c(\sigma')} \frac{\chi(\Sigma'')}{\chi(\Sigma)} 
	\lim\limits_{n \to \infty} \int_{\Sub(G)} f(H) \, d \mu_{\im \rho^{(n)}_{\Sigma''}}(H) \\
	&= \lim\limits_{n\to \infty} \int_{\Sub(G)} f(H) \, d\tilde{\Phi}(\mf{r}_n)(H)
	\end{align*}
	for every $f \in C(\Sub(G))$. This shows that $\tilde{\Phi} \colon \augTeich(\Sigma) \longrightarrow \IRS(G)$ is continuous.
	
	Observe that $\tilde{\Phi}(\mf{r})$ depends only on the set of $G$-conjugacy classes $\{ [\im \rho_{\Sigma'}] \in \Moduli(\Sigma') \, | \, \Sigma' \in c(\sigma)\}$. This set remains unaffected by the mapping class group action such that $\tilde{\Phi}$ descends to a continuous map 
	$\Phi \colon \augModuli(\Sigma) \longrightarrow \IRS(G)$.
	
	By definition $\Phi|_{\Moduli(\Sigma)} = \iota \colon \Moduli(\Sigma) \longrightarrow \IRS(G)$ holds. We want to show that $\Phi$ is surjective, i.e.\
	\[ \Phi(\augModuli(\Sigma)) = \ol{\iota(\Moduli(\Sigma))}.\]
	Since $\augModuli(\Sigma)$ is compact and $\Phi$ is continuous the image $\Phi(\augModuli(\Sigma)$ is compact and contains $\iota(\Moduli(\Sigma)) = \Phi(\Moduli(\Sigma))$. Because $\IRS(G)$ is Hausdorff, compact subsets are closed such that $\ol{\iota(\Moduli(\Sigma))} \subseteq \Phi(\augModuli(\Sigma))$. Vice versa, let $\mu \in \ol{\iota(\Moduli(\Sigma))}$ and let $[[\rho_n]]_{n \in \NN} \subset \Moduli(\Sigma)$ be a sequence such that $\iota([[\rho_n]]) = \mu_{\im \rho_n}$ converges to $\mu$ as $n \to \infty$. Because $\augModuli(\Sigma)$ is compact there is a convergent subsequence $[[\rho_{n_k}]] \to [\mf{r}] \in \augModuli(\Sigma)$ as $k \to \infty$. Because $\Phi$ is continuous it follows that
	\[ \mu = \lim_{k\to \infty} \Phi([[\rho_{n_k}]]) = \Phi([\mf{r}]) \in \Phi(\augModuli(\Sigma)).\]
	
	Finally, we want to prove that $\Phi$ is finite-to-one. Given $\sigma \subset \mc{C}(\Sigma)$ consider the assembly map from Definition \ref{defn:AssemblyMap}
	\[ A_\sigma \colon \prod_{\Sigma' \in c(\sigma)} \Moduli^*(\Sigma') \longrightarrow \augModuli(\Sigma).\]
	
	Recall that $\Moduli^*(\Sigma') = \Teich(\Sigma')/ \PMCG(\Sigma')$ and that we have a short exact sequence 
	\[1 \longrightarrow \PMCG(\Sigma') \longrightarrow \MCG(\Sigma') \longrightarrow \Sym(p(\Sigma')) \longrightarrow 1\]
	where $p(\Sigma')$ denotes the number of punctures of $\Sigma'$ and $\Sym(p(\Sigma'))$ its symmetric group. Hence, we obtain a well-defined action of $\Sym(p(\Sigma')) \cong \MCG(\Sigma')/\PMCG(\Sigma')$ on $\Moduli^*(\Sigma')$ and a quotient map 
	\[\Moduli^*(\Sigma') \longrightarrow \Moduli(\Sigma') \cong \Moduli^*(\Sigma') / \Sym(p(\Sigma')) \] 
	that is at most $p(\Sigma') ! $-to-one. These induce a quotient map
	\[ Q_\sigma \colon \prod_{\Sigma' \in c(\sigma)} \Moduli^*(\Sigma') \longrightarrow \prod_{\Sigma' \in c(\sigma)} \Moduli(\Sigma').\]
	Consequently, $\prod_{\Sigma' \in c(\sigma)} (p(\Sigma') !)$ is an upper bound for the cardinality of any fiber of this map.

	In fact, there is an upper bound on the cardinality of the fiber that depends only on $\Sigma$. First, observe that $\chi(\Sigma') \leq -1$ for every $\Sigma' \in c(\sigma)$, so that $\# c(\sigma) \leq \abs{\chi(\Sigma)}$ by $\chi(\Sigma) = \sum_{\Sigma' \in c(\sigma)} \chi(\Sigma')$. Further, if $p(\Sigma')$ is the number of punctures and $g(\Sigma')$ is the genus of $\Sigma' \in c(\sigma)$ then $\chi(\Sigma') = 2 - 2 g(\Sigma') - p(\Sigma')$, such that
	\[ p(\Sigma') = 2 - 2 g(\Sigma') + \abs{\chi(\Sigma')} \leq \abs{\chi(\Sigma)} + 2.\]
	Hence,
	\[\prod_{\Sigma' \in c(\sigma)} (p(\Sigma') !) \leq (\abs{\chi(\Sigma)}+2) !^{\abs{\chi(\Sigma)}} \eqqcolon B_1(\Sigma)\]
	is an upper bound for the cardinality of the fiber of $Q_\sigma$.
	
	Now, consider the restriction
	\begin{align*}
	\widetilde{\Phi}_\sigma \coloneqq \widetilde{\Phi}|_{\Teich_\sigma(\Sigma)} \colon \Teich_{\sigma}(\Sigma) &\longrightarrow \IRSModuli(\Sigma),\\
	([\rho_{\Sigma'}])_{\Sigma' \in c(\sigma)} &\longmapsto \sum_{\Sigma' \in c(\sigma)} \frac{\chi(\Sigma')}{\chi(\Sigma)} \cdot \mu_{\im \rho_{\Sigma'}}.
	\end{align*}
	Note that the right-hand-side depends only on the conjugacy classes $([\im \rho_{\Sigma'}])_{\Sigma' \in c(\sigma)}$ such that we obtain a map
	\begin{align*}
	\Psi_{\sigma} \colon \prod_{\Sigma' \in c(\sigma)} \Moduli(\Sigma') &\longrightarrow \IRSModuli(\Sigma), \\
	([\Gamma_{\Sigma'}])_{\Sigma' \in c(\sigma)} &\longmapsto \sum_{\Sigma' \in c(\sigma)} \frac{\chi(\Sigma')}{\chi(\Sigma)} \cdot \mu_{\Gamma_{\Sigma'}}.
	\end{align*}
	Thus we have the following commutative diagram:
	\begin{center}
		\begin{tikzcd}[row sep=large]
			& \displaystyle \prod_{\Sigma' \in c(\sigma)} \Moduli^*(\Sigma') 
			\arrow[r,"Q_\sigma"]
			\arrow[d,"A_\sigma"]
			& \displaystyle \prod_{\Sigma' \in c(\sigma)} \Moduli(\Sigma') 
			\arrow[d,"\Psi_\sigma"]
			\\
			& \augModuli(\Sigma)
			\arrow[r,"\Phi"]
			& \IRSModuli(\Sigma)\\
			\Teich_{\sigma}(\Sigma)
			\arrow[uur, bend left]
			\arrow[ur]
			\arrow[urr,bend right,"\widetilde{\Phi}_\sigma"]
			& &
		\end{tikzcd}
	\end{center}
	
	Given an element $\mu$ in the image of $\Psi_\sigma$ there is a unique set of distinct conjugacy classes of lattices $\{ [\Gamma_1], \ldots, [\Gamma_m]\}$ and $\lambda_1, \ldots, \lambda_m > 0$, $\lambda_1 + \cdots + \lambda_m =1$, such that $\mu = \sum_{i=1}^m \lambda_i \cdot \mu_{\Gamma_i}$, by Lemma \ref{lem:LinearIndependenceIRS}. A rough upper bound on the number of preimages in $\Psi_\sigma^{-1}(\mu)$ is given by 
	\[\# c(\sigma) ! \leq \abs{\chi(\Sigma)} ! \eqqcolon B_2(\Sigma)\] 
	as we have at most that many different possibilities of assigning the conjugacy classes $\{[\Gamma_1], \ldots, [\Gamma_m]\}$ to the components $c(\sigma)$. \footnote{In fact, we are ignoring that  $\Gamma_i \backslash \HH^2$ needs to be homeomorphic to $\Sigma' \in c(\sigma)$ if $[\Gamma_i]$ was assigned to $\Sigma' \in c(\sigma)$.}
	
	By Proposition \ref{prop:FinitelyCoveredByAssemblyMaps} $\augModuli(\Sigma)$ is covered by finitely many images of assembly maps
	\[ A \colon \bigsqcup_{i \in I} \prod_{\Sigma' \in c(\sigma_i)} \Moduli^*(\Sigma') \longrightarrow \augModuli(\Sigma),\]
	where $\{ \sigma_i \colon i \in I \}$ is a finite collection of representatives for each orbit of simplices in $\mc{C}(\Sigma)$ under the mapping class group action. We define maps 
	\[Q \colon \bigsqcup_{i \in I} \prod_{\Sigma' \in c(\sigma_i)} \Moduli^*(\Sigma') \longrightarrow  \bigsqcup_{i \in I} \prod_{\Sigma' \in c(\sigma_i)} \Moduli(\Sigma')\] 
	via $Q_{\sigma_i}$ on $\prod_{\Sigma' \in c(\sigma_i)} \Moduli^*(\Sigma')$, and
	\[\Psi \colon \bigsqcup_{i \in I} \prod_{\Sigma' \in c(\sigma_i)} \Moduli(\Sigma') \longrightarrow \IRSModuli(\Sigma)\] 
	via $\Psi_{\sigma_i}$ on $\prod_{\Sigma' \in c(\sigma_i)} \Moduli(\Sigma')$, for every $i \in I$. The cardinality of the fibers of $Q$ and $\Psi$ are then bounded by $B_1(\Sigma)$ and $B_2(\Sigma)$ respectively. Moreover, we have the following commutative diagram of surjective maps:
	\begin{center}
		\begin{tikzcd}[row sep = large, column sep = large]
			\displaystyle\bigsqcup_{i \in I} \prod_{\Sigma' \in c(\sigma_i)} \Moduli^*(\Sigma')
			\arrow[r,"Q", two heads] \arrow[d, "A", two heads]
			&\displaystyle\bigsqcup_{i \in I} \prod_{\Sigma' \in c(\sigma_i)} \Moduli(\Sigma')
			\arrow[d,"\Psi",two heads]
			\\
			\augModuli(\Sigma)
			\arrow[r,"\Phi",two heads]
			&\IRSModuli(\Sigma)
		\end{tikzcd}
	\end{center}
	
	Let $\mu \in \IRSModuli(\Sigma)$. Then
	\[ \# \Phi^{-1}(\mu) = \# A( Q^{-1}(\Psi^{-1}(\mu))) \leq \# Q^{-1}(\Psi^{-1}(\mu)) \leq B_1(\Sigma) \cdot B_2(\Sigma) \eqqcolon B(\Sigma), \]
	and the proof is complete.
\end{proof}

Before we provide a proof of Lemma \ref{lem:L1ConvTruncDom} in the next section, we conclude our current discussion with a minimal example that shows that there are points $\mu \in \IRSModuli(\Sigma)$ whose preimages $\Phi^{-1}(\mu)$ consists of more than one point. 

\begin{figure}[b]
	\centering
	\def\svgwidth{\linewidth}
	\subimport{./pictures/}{example.pdf_tex}
	\caption{Both points $[\mf{r}_1], [\mf{r}_2] \in \augModuli(\Sigma)$ are mapped to the same invariant random subgroup $\mu_{\Gamma_0} = \Phi([\mf{r}_1]) = \Phi([\mf{r}_2])  \in \IRS(G)$; see Example \ref{ex:preimage}.}
	\label{fig:example}
\end{figure}

\begin{example}\label{ex:preimage}
	Let $\Sigma = \Sigma_{2,0}$ be a closed surface of genus $2$.
	Let $\sigma_1 = \{ \alpha_1, \beta_1, \gamma_1\} \subset \mc{C}(\Sigma)$ be a pants decomposition of $\Sigma$ where $\alpha_1, \gamma_1$ are non-separating curves and $\beta_1$ is separating. Further, let $\sigma_2 = \{ \alpha_2, \beta_2, \gamma_2\} \subset \mc{C}(\Sigma)$ be a pants decomposition where $\alpha_2, \beta_2, \gamma_2$ are all non-separating. The Teichm\"uller space $\Teich(\Sigma_{0,3}) =\{ [\rho_0] \}$ of a thrice-punctured-sphere is just one point. Consider the elements $\mf{r}_1=  ([\rho_0])_{\Sigma' \in c(\sigma_1)} \in \Teich_{\sigma_1}(\Sigma)$, $\mf{r}_2=  ([\rho_0])_{\Sigma' \in c(\sigma_2)} \in \Teich_{\sigma_1}(\Sigma)$ and their images $[\mf{r}_1], [\mf{r}_2] \in \augModuli(\Sigma)$; see Figure \ref{fig:example}. Clearly, $[\mf{r}_1] \neq [\mf{r}_2]$ because $\sigma_1$ and $\sigma_2$ are not in the same mapping class group orbit in $\mc{C}(\Sigma)$. However, 
	\[ \Phi([\mf{r}_1]) = \mu_{\Gamma_0} = \Phi([\mf{r}_2]) \in \IRSModuli(\Sigma)\]
	where $\Gamma_0 = \im \rho_0$.
\end{example}


\section{Proof of Lemma \ref{lem:L1ConvTruncDom}} \label{section:ProofOfLemma}

We will use elementary hyperbolic geometry in our proof of Lemma \ref{lem:L1ConvTruncDom}.
The proof is split into several sub-lemmas, some of which might be of individual interest.

The first two Lemmas are concerned with the pointwise convergence of the characteristic functions $\II_{\tilde{C}(\Gamma_n)}$ and $\II_{D_o(\Gamma_n)}$ outside of a set of measure zero.

\begin{lemma}\label{lem:CoreConv}
	Let $(\Gamma_n)_{n \in \NN} \subset \mc{D}(\Sigma)$ be a sequence converging to $\Gamma \in \mc{D}(\Sigma)$. Then 
	\[ \II_{\tilde{C}(\Gamma_n)}(x) \to \II_{\tilde{C}(\Gamma)}(x) \qquad (n \to \infty)\]
	for every $x \in \HH^2 \setminus \partial\tilde{C}(\Gamma)$.
\end{lemma}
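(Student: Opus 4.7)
The plan is to reduce the claimed pointwise convergence to two elementary pictures on opposite sides of $\partial \tilde{C}(\Gamma)$. First, Proposition~\ref{prop:geometric_equals_algebraic} asserts that $\im \colon \Rep^*(\Sigma) \to \mc{D}(\Sigma)$ is a local homeomorphism, so for $n$ sufficiently large I can lift the Chabauty convergence $\Gamma_n \to \Gamma$ to algebraic convergence $\rho_n \to \rho$ in $\Rep^*(\Sigma)$ with $\im \rho_n = \Gamma_n$ and $\im \rho = \Gamma$. In particular, translation lengths, fixed points of hyperbolic elements, and geodesic axes will vary continuously in $n$.

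Now fix $x \in \HH^2 \setminus \partial \tilde{C}(\Gamma)$ and split into two cases. If $x \in \interior(\tilde{C}(\Gamma))$, I will work in the Klein model, where hyperbolic and Euclidean convexity coincide. Since $x$ lies in the Euclidean interior of $\conv(L(\Gamma))$ and $\Gamma$ is non-elementary so that fixed points of hyperbolic elements of $\Gamma$ are dense in $L(\Gamma)$, I can choose $\alpha_1, \alpha_2, \alpha_3 \in \pi_1(\Sigma)$ with $\rho(\alpha_i)$ hyperbolic, fixed points $\xi_i \in L(\Gamma)$, and $x$ lying in the interior of the ideal geodesic triangle $T$ with vertices $\xi_1, \xi_2, \xi_3$. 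Because $\ell(\rho(\alpha_i)) > 0$, algebraic convergence $\rho_n(\alpha_i) \to \rho(\alpha_i)$ forces $\rho_n(\alpha_i)$ to be hyperbolic for $n$ large, with fixed points $\xi_i^{(n)} \in L(\Gamma_n)$ tending to $\xi_i$. The corresponding triangles $T_n$ thus converge to $T$, so $x \in T_n \subseteq \conv(L(\Gamma_n)) \cap \HH^2 = \tilde{C}(\Gamma_n)$ for large $n$.

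If instead $x \in \interior(\HH^2 \setminus \tilde{C}(\Gamma))$, then $x$ lies in the interior of an open half-space $H$ bounded by the axis $\beta$ of a hyperbolic peripheral element $\rho(c)$, where $c \in \pi_1(\Sigma)$ corresponds to a free end of $\Gamma \backslash \HH^2$. Algebraic convergence $\rho_n(c) \to \rho(c)$ then makes $\rho_n(c)$ hyperbolic for large $n$, with axis $\beta_n \to \beta$; by Remark~\ref{rem:PeripheralsParabolic}, the peripheral element $\rho_n(c)$ bounds a funnel of $\Gamma_n \backslash \HH^2$ which lifts to a half-space $H_n \subseteq \HH^2 \setminus \tilde{C}(\Gamma_n)$ along $\beta_n$. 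The main obstacle I anticipate is to verify that $H_n$ lies on the \emph{same} side of $\beta_n$ as $H$ does of $\beta$. I plan to handle this by invoking the orientation-preserving diffeomorphisms $\Gamma \backslash \HH^2 \to \Gamma_n \backslash \HH^2$ inducing the isomorphisms $\sigma_n \colon \Gamma \to \Gamma_n$ with $\sigma_n \circ \rho = \rho_n$ that were constructed in the proof of Proposition~\ref{prop:geometric_equals_algebraic}; these converge to the identity in the $C^\infty$-topology on arbitrarily large compact sets, and by orientation-preservation they necessarily match funnels to funnels. This forces $H_n$ onto the same side of $\beta_n$ as $H$, and continuity $\beta_n \to \beta$ then places $x$ in $H_n \subseteq \HH^2 \setminus \tilde{C}(\Gamma_n)$ for $n$ large, completing both cases.
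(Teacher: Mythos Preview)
Your proof is correct, and your exterior case ($x \notin \tilde{C}(\Gamma)$) follows the same line as the paper's: pick a peripheral $c \in \pi_1(\Sigma)$ with $\rho(c)$ hyperbolic bounding the half-space $H \ni x$, and track $\rho_n(c)$. You are more careful than the paper about the side of $\beta_n$ on which $H_n$ lies; the paper simply asserts $x \in H(\gamma_n)$ for large $n$ without comment. Your invocation of the diffeomorphisms from Proposition~\ref{prop:geometric_equals_algebraic} works, but is heavier than needed. A lighter fix: choose any $\alpha \in \pi_1(\Sigma)$ with $\rho(\alpha)$ hyperbolic and $\ax(\rho(\alpha)) \neq \beta$; then $\ax(\rho(\alpha)) \subseteq \tilde{C}(\Gamma)$ lies on the side of $\beta$ opposite $H$, and by continuity $\ax(\rho_n(\alpha)) \subseteq \tilde{C}(\Gamma_n)$ lies on the corresponding side of $\beta_n$ for large $n$, forcing $H_n$ onto the side of $H$.

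Your interior case ($x \in \interior(\tilde{C}(\Gamma))$) is genuinely different from the paper's and more elementary. The paper argues by contradiction: assuming $x \notin \tilde{C}(\Gamma_{n_k})$ along a subsequence, it finds primitive boundary elements $\gamma_{n_k} \in \Gamma_{n_k}$ with $x$ in their funnel half-spaces, uses a commutativity argument to bound $d(x,\ax(\gamma_{n_k}))$, extracts a convergent subsequence $\gamma_{n_k} \to \gamma \in \Gamma$ by compactness, identifies $\gamma$ as a boundary element via Lemma~\ref{lem:nbhddiscrete}, and concludes $x \in \ol{H(\gamma)}$, a contradiction. Your argument sidesteps all of this: density of hyperbolic fixed points in $L(\Gamma)$ traps $x$ in an ideal triangle whose vertices are fixed points of three elements $\rho(\alpha_i)$, and continuity of fixed points under algebraic convergence immediately yields $x \in T_n \subseteq \tilde{C}(\Gamma_n)$. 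This buys you a direct proof with no compactness extraction and no appeal to the structure of boundary elements; the paper's route, by contrast, reuses the ``boundary element'' viewpoint symmetrically in both cases.
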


\begin{proof}
	By Proposition \ref{prop:geometric_equals_algebraic} we may choose $\rho, \rho_n \in \Rep^*(\Sigma)$ such that $\im \rho = \Gamma$, $\im \rho_n = \Gamma_n$ for large $n$, and $\rho_n \to \rho$ as $n \to \infty$. 
	
	First, let $x \in \HH^2 \setminus \tilde{C}(\Gamma)$. Let $\gamma \in \Gamma$ be a hyperbolic element whose axis bounds an open half-space $H(\gamma)$ such that $H(\gamma) \cap \tilde{C}(\Gamma) = \emptyset$ and $x \in H(\gamma)$. Let $c \in \pi_1(\Sigma)$ such that $\rho(c) = \gamma$. Then $\rho_{n}(c) = \gamma_{n}$ are boundary elements converging to $\gamma$. But then $x \in H(\gamma_{n})$  such that $x \in \HH^2 \setminus\tilde{C}(\Gamma_{n})$, for large $n \in \NN$.
	
	Let $x \in \interior(\tilde{C}(\Gamma))$ and suppose that there is a subsequence $(n_k)_{k \in \NN}$ such that $x \notin \tilde{C}(\Gamma_{n_k})$ for all $k \in \NN$. Then there are primitive hyperbolic boundary elements $\gamma_{n_k} \in \Gamma_{n_k}$ whose axes $\ax(\gamma_{n_k})$ bound half-spaces $H(\gamma_{n_k}) \subset \HH^2$ such that $x \in H(\gamma_{n_k})$. 
	
	We claim that there is $D \geq 0$ such that $d(x,\ax(\gamma_{n_k}))<D$. 
	Suppose to the contrary that there is a further subsequence, also denoted by $(n_k)_{k \in \NN}$, such that $d(x, \ax(\gamma_{n_k})) \to \infty$ as $k \to \infty$. If $\eta_{n_k}, \eta'_{n_k} \in \Gamma_{n_k}$ converge to some elements $\eta, \eta' \in \Gamma$ then $\eta_{n_k} x, \eta'_{n_k} x \in H(\gamma_{n_k})$ for large $k$. However, this means that $\eta_{n_k}$ and $\eta'_{n_k}$ must leave the entire half-space $H(\gamma_{n_k})$ invariant. Thus they commute having the same axis $\ax(\gamma_{n_k})$. But then also $\eta$ and $\eta'$ commute. This contradicts the fact that $\Gamma$ is not abelian!
	
	Because $\gamma_{n_k}$ are primitive boundary elements their translation length $\ell(\gamma_{n_k})$ is uniformly bounded by the maximal length $L$ of a boundary curve in $C(\Gamma_{n_k})$.
	It is not hard to see that the subset
	\[ C = \{ g \in \PSL_2(\RR) \, | \, g \text{ is hyperbolic}, \ell(g) \leq L, \ax(g) \cap \ol{B}_x(D) \neq \emptyset  \}\]
	is compact, such that $\gamma_{n_k} \to \gamma \in \Gamma$ up to a subsequence.
	
	Let $c \in \pi_1(\Sigma)$ such that $\rho(c) = \gamma$. Because $\gamma_{n_k} \to \gamma$ and $\rho_{n_k}(c) \to \gamma$ as $k \to \infty$, we have that $\gamma_{n_k} = \rho_{n_k}(c)$ by Lemma \ref{lem:nbhddiscrete}. Hence, $\gamma=\rho(c)$ is a boundary element. Since $x \in H(\gamma_{n_k})$ and $\gamma_{n_k} \to \gamma$, it follows that $x \in \ol{H(\gamma)}$ contradicting $x \in \interior(\tilde{C}(\Gamma))$.
\end{proof}

\begin{lemma} \label{lem:DomConv}
	Let $(\Gamma_n)_{n \in \NN} \subset \mc{D}(\Sigma)$ be a sequence converging to $\Gamma \in \mc{D}(\Sigma)$. Then 
	\[ \II_{D_o(\Gamma_n)}(x) \to \II_{D_o(\Gamma)}(x) \qquad (n \to \infty)\]
	for every $x \in \HH^2 \setminus \partial D_o(\Gamma)$.
\end{lemma}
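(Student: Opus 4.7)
The plan is to decompose $\HH^2 \setminus \partial D_o(\Gamma)$ as the disjoint union of the open set $\interior(D_o(\Gamma))$ and the open set $\HH^2 \setminus D_o(\Gamma)$, and to prove the pointwise convergence $\II_{D_o(\Gamma_n)}(x) \to \II_{D_o(\Gamma)}(x)$ on each region separately. The argument will rely on the Chabauty characterization (C1) and (C2) from Proposition \ref{prop:charact_con_Chab}, together with the local discreteness given by Lemma \ref{lem:nbhddiscrete}.

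For the easy direction, suppose $x \in \HH^2 \setminus D_o(\Gamma)$. Then there is some $\gamma \in \Gamma \setminus \{1\}$ with $d(x, \gamma o) < d(x,o)$. Using (C1) I approximate $\gamma$ by a sequence $\gamma_n \in \Gamma_n$; Lemma \ref{lem:nbhddiscrete} guarantees that $\gamma_n \neq 1$ for large $n$ (since $\gamma \neq 1$), and continuity of $d(x, \cdot \, o)$ gives $d(x, \gamma_n o) < d(x, o)$ for large $n$, so $x \notin D_o(\Gamma_n)$ eventually.

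The substantive case is $x \in \interior(D_o(\Gamma))$. Here I fix a radius $R > 2 d(x,o) + 1$, enumerate the finite set $\{1, \gamma_1, \ldots, \gamma_N\} = \{\gamma \in \Gamma : d(o, \gamma o) \leq R\}$, and set
\[
\delta \coloneqq \min_{1 \leq i \leq N} \bigl( d(x, \gamma_i o) - d(x, o)\bigr) > 0,
\]
noting that for any $\gamma \in \Gamma$ with $d(o, \gamma o) > R$ one has $d(x, \gamma o) \geq d(o, \gamma o) - d(x, o) > d(x, o) + 1$. By (C1) each $\gamma_i$ is approximated by $\gamma_i^{(n)} \in \Gamma_n$, and continuity yields $d(x, \gamma_i^{(n)} o) > d(x, o) + \delta/2$ for large $n$. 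The decisive step is to rule out ``new'' short elements in $\Gamma_n$: suppose for contradiction there were a subsequence and elements $\eta_{n_k} \in \Gamma_{n_k} \setminus \{1, \gamma_1^{(n_k)}, \ldots, \gamma_N^{(n_k)}\}$ with $d(x, \eta_{n_k} o) \leq d(x, o)$. Then $d(o, \eta_{n_k} o) \leq 2 d(x,o) < R$, so by compactness of $\{g \in G : d(o, go) \leq R\}$ a further subsequence converges to some $\eta \in G$; condition (C2) forces $\eta \in \Gamma$, and $d(x, \eta o) \leq d(x,o) < d(x,o) + \delta$ forces $\eta = \gamma_j$ for some $j \in \{0, 1, \ldots, N\}$. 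But then $\eta_{n_k} (\gamma_j^{(n_k)})^{-1} \in \Gamma_{n_k}$ tends to $1$, and Lemma \ref{lem:nbhddiscrete} applied with $\Gamma_n \to \Gamma$ forces this element to equal $1$ for large $k$, i.e.\ $\eta_{n_k} = \gamma_j^{(n_k)}$, contradicting the choice of $\eta_{n_k}$. Hence for large $n$ every $\eta \in \Gamma_n \setminus \{1\}$ satisfies $d(x, \eta o) > d(x, o)$, so $x \in D_o(\Gamma_n)$.

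The main obstacle is exactly the last step. Chabauty convergence provides control on elements of $\Gamma_n$ that accumulate (via (C2)), but it does not immediately preclude ``parasitic'' elements of $\Gamma_n$ from appearing near the identity or near one of the $\gamma_i$. The combination of local compactness of $G$ (to extract convergent subsequences), condition (C2) (to identify their limits in $\Gamma$), and the uniform discreteness afforded by Lemma \ref{lem:nbhddiscrete} (to rule out short nontrivial elements in limits) is what resolves this. Once pointwise convergence is established off the boundary $\partial D_o(\Gamma)$ (a countable union of geodesic arcs, hence of planar measure zero), the conclusion of the lemma follows.
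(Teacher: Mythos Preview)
Your proof is correct and follows essentially the same route as the paper: the easy direction is identical, and for $x \in \interior(D_o(\Gamma))$ both arguments extract a convergent subsequence of offending elements via compactness, identify the limit in $\Gamma$ using (C2), and invoke Lemma~\ref{lem:nbhddiscrete} to reach a contradiction. The paper's version is simply more streamlined: it skips your enumeration of $\gamma_1,\ldots,\gamma_N$ and the auxiliary $\delta$, assuming directly that $x\notin D_o(\Gamma_{n_k})$ along a subsequence, picking witnesses $\gamma_{n_k}\in\Gamma_{n_k}\setminus\{1\}$ with $d(x,\gamma_{n_k}o)<d(x,o)$, passing to a limit $\gamma\in\Gamma\setminus\{1\}$, and obtaining $d(x,\gamma o)\le d(x,o)$ --- so your intermediate bookkeeping, while harmless, is not needed.
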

\begin{proof}
	Let $x \in \interior(D_o(\Gamma))$. Then $d(x,o) < d(x,\gamma o)$ for every $\gamma \in \Gamma \setminus \{1\}$. We want to show that $x \in D_o(\Gamma_n)$ for large $n$. Assume to the contrary that there is a subsequence $(n_k)_{k \in \NN}$ such that $x \not\in D_o(\Gamma_{n_k})$, i.e.\ there are $\gamma_{n_k} \in \Gamma_{n_k} \setminus \{1\}$ such that $d(x,o) > d(x, \gamma_{n_k} o)$ for every $k \in \NN$. Up to passing to a subsequence we may assume that $\gamma_{n_k} \to \gamma \in \Gamma$, and $\gamma \neq 1$ by Lemma \ref{lem:nbhddiscrete}. But
	\[ d(x,o) \geq \lim_{k \to \infty} d(x, \gamma_{n_k} o) = d(x, \gamma o)\]
	contradicting $x \in \interior(D_o(\Gamma))$.
	
	Let $x \in \HH^2 \setminus D_o(\Gamma)$. Then there is $\gamma \in \Gamma \setminus \{1\}$ such that $d(x,o) > d(x, \gamma o)$. We want to show that $x \in \HH^2 \setminus D_o(\Gamma_n)$ for large $n$. Let $\gamma_n \in \Gamma_n$ such that $\gamma_n \to \gamma \neq 1$ as $n \to \infty$. Then $\gamma_n \neq 1$ and $d(x,\gamma_n o) < d(x,o)$ such that $x \notin D_o(\Gamma_n)$, for large $n$.
\end{proof}

We give a characterization of peripheral curves, now.

\begin{lemma}\label{lem:CharctPeripheralElements}
	Let $\mu = \{ \gamma_1, \ldots, \gamma_r\} \subset \Sigma$ be a filling collection of essential simple closed curves such that
	\begin{enumerate}
		\item $\gamma_i, \gamma_j$ are in minimal position for all $i,j \in \{1,\ldots,r\}$,
		\item the curves in $\mu$ are pairwise non-isotopic, and
		\item for distinct triples $i,j,k \in \{1, \ldots, r\}$ at least one of the intersections $\gamma_i \cap \gamma_j, \gamma_j \cap \gamma_k, \gamma_i \cap \gamma_k$ is empty.
	\end{enumerate}
	
	Let $\alpha \subset \Sigma$ be a homotopically non-trivial closed curve. Then
	\begin{center}
		$\alpha$ is peripheral $\iff$ $i(\alpha,\gamma_i) = 0 $ for every $i=1, \ldots, r$.
	\end{center}
\end{lemma}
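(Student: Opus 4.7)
The plan is to prove each implication separately using a complete finite-area hyperbolic metric on $\Sigma$ and geodesic realizations of the curves.

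For the forward direction, suppose $\alpha$ is peripheral. Fix a cusp neighborhood $C$ around the puncture to which $\alpha$ is homotopic, and realize $\alpha$ by a small loop $\alpha_0$ contained in $C$. Each $\gamma_i$ is essential, so its geodesic representative $\gamma_i^*$ lies entirely in the thick part of $\Sigma$ and misses $C$ (cusp regions are disjoint from any closed geodesic). Thus $\alpha_0$ and $\gamma_i^*$ are already disjoint, and being in minimal position this gives $i(\alpha, \gamma_i) = 0$ for every $i$.

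For the reverse direction, replace each $\gamma_i$ by its closed geodesic representative $\gamma_i^*$. Since $\mu$ fills $\Sigma$, the complement $\Sigma \setminus \bigcup_i \gamma_i^*$ is a disjoint union of open disks and open once-punctured disks. Assume $\alpha$ is homotopically non-trivial with $i(\alpha,\gamma_i)=0$ for every $i$, and suppose for contradiction that $\alpha$ is not peripheral. Then $\alpha$ has a closed geodesic representative $\alpha^*$. Since distinct closed geodesics on a hyperbolic surface realize the geometric intersection number, $i(\alpha^*,\gamma_i^*)=0$ forces either $\alpha^* = \gamma_i^*$ as subsets of $\Sigma$, or $\alpha^* \cap \gamma_i^* = \emptyset$. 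The first case is excluded as follows: if $[\alpha]=[\gamma_i]$ for some $i$, then because $\mu$ fills and the $\gamma_j$ are pairwise non-isotopic, $\gamma_i$ must intersect at least one other $\gamma_j$ (otherwise an annular collar of $\gamma_i^*$ would persist in the complement, contradicting that components are disks or once-punctured disks), and then $i(\alpha,\gamma_j) = i(\gamma_i,\gamma_j) > 0$, a contradiction.

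Therefore $\alpha^*$ is disjoint from every $\gamma_i^*$, so $\alpha^*$ is contained in a single complementary component $R$. But $R$ is either a disk or a once-punctured disk, so the inclusion-induced map $\pi_1(R) \to \pi_1(\Sigma)$ has image that is either trivial or cyclic generated by a peripheral element. This forces $[\alpha]$ to be trivial or peripheral, contradicting our assumption and completing the proof. The main (minor) obstacle is the bookkeeping step of ruling out $\alpha^* = \gamma_i^*$, which uses the filling hypothesis and the non-isotopy condition (ii) to ensure no curve in the collection is redundant; everything else is a direct appeal to the standard fact that geodesics achieve minimal intersection.
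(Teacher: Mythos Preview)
Your argument is correct and takes a genuinely different route from the paper. For the reverse implication the paper isotopes each $\gamma_i$ off $\alpha$ individually and then invokes \cite[Lemma~2.9]{farbmarg} to combine these into a single ambient isotopy of $\Sigma$; hypotheses (i)--(iii) are in the statement precisely to make that lemma applicable. You instead pass to a complete finite-area hyperbolic metric and use that a closed geodesic and a simple closed geodesic are automatically in minimal position, so a non-peripheral $\alpha^*$ would land in a complementary disk or once-punctured disk. This sidesteps the isotopy-extension machinery entirely, and in fact your argument never uses conditions (i) or (iii): geodesic representatives are already pairwise minimal, and triple points cause no trouble for the filling characterization. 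So you have actually proved a slightly stronger statement.

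Two minor imprecisions worth tightening. First, ``$\alpha^* = \gamma_i^*$ as subsets'' only gives that $\alpha$ is freely homotopic to some power $\gamma_i^k$, not that $[\alpha]=[\gamma_i]$; the conclusion survives since $i(\gamma_i^k,\gamma_j)=|k|\,i(\gamma_i,\gamma_j)>0$. Second, the phrase ``an annular collar of $\gamma_i^*$ would persist in the complement'' is a bit loose, since $\gamma_i^*$ itself lies in the union; the clean statement is that if $\gamma_i^*$ meets no other $\gamma_j^*$ then it is an entire boundary circle of the path-completion $\bar R$ of an adjacent complementary region, and since $\bar R$ is a disk or once-punctured disk this forces $\gamma_i$ to be inessential or peripheral, contradicting the hypothesis.
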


\begin{proof}
	Suppose $\alpha \subset \Sigma$ is peripheral. Then $\alpha$ is homotopic to one of the punctures of $\Sigma$. Since $\mu$ fills $\Sigma$ there is a punctured disk $\mathbb{D}^\times \subset \Sigma \setminus \mu$ surrounding this puncture. Thus we may homotope $\alpha$ into $\mathbb{D}^\times$ such that $i(\alpha,\gamma_i) = 0$ for every $i=1, \ldots, r$.
	
	If $i(\alpha,\gamma_i) = 0$ for every $i=1,\ldots,r$, then there are isotopies moving $\gamma_i$ to $\tilde{\gamma}_i$ such that $\tilde{\gamma}_i \cap \alpha = \emptyset$. Because our system satisfies the hypotheses (i)-(iii), there is an isotopy of $\Sigma$ moving $\bigcup_{i=1}^r \gamma_i$ to $\bigcup_{i=1}^r \tilde{\gamma}_i$; see \cite[Lemma 2.9]{farbmarg}. The collection $\tilde{\mu} = \{ \tilde{\gamma}_1,\ldots,\tilde{\gamma}_r\}$ is still filling and $\alpha$ is a homotopically non-trivial closed curve in $\Sigma \setminus \tilde{\mu}$. Thus $\alpha$ is contained in a punctured disk $\mathbb{D}^\times \subset \Sigma \setminus \mu$. Therefore, $\alpha$ is  homotopic to a puncture, i.e.\ $\alpha$ is peripheral.
\end{proof}

Using the previous Lemma we will prove next that there is a lower bound for the length of essential curves with respect to a convergent sequence of representations:
\begin{lemma} \label{lem:SmallLengthImpliesPeripheral}
	Let $(\rho_n)_{n \in \NN} \subset \Rep^*(\Sigma)$ be a sequence converging to $\rho \in \Rep^*(\Sigma)$. Then there is $\epsilon > 0$ such that
	\[ \ell(\rho_n(\alpha)) < \epsilon \implies \alpha \text{ is peripheral}\]
	for every $\alpha \in \pi_1(\Sigma)$ and all $n \in \NN$.
\end{lemma}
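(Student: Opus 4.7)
The plan is to combine the Collar Lemma (Lemma \ref{lem:CollarLemma}) with the characterization of peripheral elements given by Lemma \ref{lem:CharctPeripheralElements}, using the algebraic convergence $\rho_n \to \rho$ to produce uniform length bounds on a fixed filling system.

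First, I would fix once and for all a filling collection $\mu = \{\gamma_1, \ldots, \gamma_r\}$ of essential simple closed curves in $\Sigma$ satisfying the three hypotheses of Lemma \ref{lem:CharctPeripheralElements}. Such a collection exists for any finite-type hyperbolic $\Sigma$: one can start from a pants decomposition together with transverse ``dual'' curves (giving a filling system), then perturb by isotopy to arrange pairwise minimal position and the triple-intersection property (iii). This is a purely topological construction that does not use $\rho_n$ at all. Each $\gamma_i$, being an essential simple closed curve, represents a primitive element of $\pi_1(\Sigma)$.

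Next, I would use the algebraic convergence $\rho_n \to \rho$ to obtain a uniform upper length bound. For each $i \in \{1,\ldots,r\}$, the function $\sigma \mapsto \ell(\sigma(\gamma_i))$ on $\Rep^*(\Sigma)$ is continuous, so $\ell(\rho_n(\gamma_i)) \to \ell(\rho(\gamma_i))$ as $n \to \infty$. In particular, the sequence $(\ell(\rho_n(\gamma_i)))_{n \in \NN}$ is bounded. Taking the maximum over the finite collection and all $n$, I obtain a constant $L > 0$ such that $\ell(\rho_n(\gamma_i)) \leq L$ for every $i = 1,\ldots,r$ and every $n \in \NN$.

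Finally, I would invoke the Collar Lemma. Suppose $\alpha \in \pi_1(\Sigma)$ is essential. By Lemma \ref{lem:CharctPeripheralElements}, there exists some $i \in \{1, \ldots, r\}$ with $i(\alpha, \gamma_i) > 0$. Since $\gamma_i$ is primitive, Lemma \ref{lem:CollarLemma} applied to $\rho_n$ gives
\[ \sinh\!\p{\tfrac{\ell(\rho_n(\alpha))}{2}} \cdot \sinh\!\p{\tfrac{\ell(\rho_n(\gamma_i))}{2}} \geq 1, \]
whence
\[ \sinh\!\p{\tfrac{\ell(\rho_n(\alpha))}{2}} \geq \frac{1}{\sinh(\ell(\rho_n(\gamma_i))/2)} \geq \frac{1}{\sinh(L/2)}. \]
Setting $\epsilon \coloneqq 2\arcsinh\p{1/\sinh(L/2)} > 0$, this yields $\ell(\rho_n(\alpha)) \geq \epsilon$ for every essential $\alpha$ and every $n \in \NN$. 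Contrapositively, $\ell(\rho_n(\alpha)) < \epsilon$ forces $\alpha$ to be peripheral, as desired.

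The only non-routine step is the topological construction of the filling system $\mu$ satisfying conditions (i)--(iii) of Lemma \ref{lem:CharctPeripheralElements}; everything else is a direct application of standard tools (continuity of translation length and the Collar Lemma). Once $\mu$ is in hand, the uniform bound $L$ is automatic by finiteness of $\mu$, and the Collar Lemma converts it into the desired uniform lower bound $\epsilon$ for essential curves.
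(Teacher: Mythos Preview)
Your proof is correct and follows essentially the same approach as the paper: both fix a filling collection $\mu$ as in Lemma~\ref{lem:CharctPeripheralElements}, use continuity of translation length along the convergent sequence $\rho_n \to \rho$ to control the $\rho_n$-lengths of the curves in $\mu$, and then apply the Collar Lemma. The only cosmetic difference is that the paper argues by contradiction (assuming a sequence of essential $\alpha_{n_k}$ with $\ell(\rho_{n_k}(\alpha_{n_k})) \to 0$ and deriving $\ell(\rho_{n_k}(\hat\gamma)) \to \infty$ for some fixed $\hat\gamma \in \mu$), whereas you give the direct quantitative version with an explicit $\epsilon = 2\arcsinh\!\bigl(1/\sinh(L/2)\bigr)$.
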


\begin{proof}
	Suppose to the contrary that there is a subsequence $(n_k)_{k \in \NN} \subset \NN$ and non-peripheral elements $\alpha_{n_k} \in \pi_1(\Sigma)$ such that $\ell(\rho_{n_k}(\alpha_{n_k})) \to 0$ as $k \to \infty$.
	Choose a collection of curves $\mu = \{\gamma_1, \ldots, \gamma_r\} \subset \pi_1(\Sigma)$ as in Lemma \ref{lem:CharctPeripheralElements}. Then there is $j_k \in \{1, \ldots, r\}$ for every $k \in \NN$ such that $i(\gamma_{j_k}, \alpha_{n_k}) \neq 0$. Up to a subsequence we may assume that $\gamma_{j_k} = \hat{\gamma} \in \mu$ is constant. But then by the Collar Lemma \ref{lem:CollarLemma}
	\[ \ell(\rho_{n_k}(\hat{\gamma})) \to \infty \qquad (k \to \infty).\]
	This contradicts the fact that $\ell(\rho_{n_k}(\hat{\gamma})) \to \ell(\rho(\hat{\gamma}))$ as $k \to \infty$.
\end{proof}

The following Lemma shows how to obtain an upper bound on the diameter of a connected subset $C$ of a hyperbolic surface given a lower bound $\epsilon$ for the injectivity radius and an upper bound for the volume of the $\epsilon$-neighborhood of $C$.

\begin{lemma} \label{lem:diam_bound_thick_part}
	Let $X$ be a hyperbolic surface and let $\epsilon > 0$. Further, let $C \subseteq X$ be a path-connected Borel set, and suppose that $\InjRad_X(x) \geq \epsilon$ for every $x \in C$. Then
	\[ \diam_X(C) \leq \frac{4 \epsilon}{v(B_o(\epsilon))} \cdot \vol_X(N_\epsilon(C))  \qquad (o \in \HH^2)\]
	where $N_\epsilon(C) = \{ x \in X \, | \, d_X(x,C) < \epsilon\}$ is the open $\epsilon$-neighborhood of $C$ in $X$.
\end{lemma}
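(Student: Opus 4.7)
The plan is to use the injectivity-radius bound to pack many disjoint embedded $\epsilon$-balls into $N_\epsilon(C)$ along a path realizing (approximately) the diameter, and then compare volumes.

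Fix $p, q \in C$ and set $d \coloneqq d_X(p,q)$; it suffices to prove the analogous inequality with $d$ in place of $\diam_X(C)$ and take the supremum over $p,q\in C$ afterwards. Since $C$ is path-connected, choose a continuous path $\gamma \colon [0,1] \to C$ with $\gamma(0)=p$ and $\gamma(1)=q$. The map $t \mapsto d_X(p,\gamma(t))$ is continuous on $[0,1]$, equals $0$ at $t=0$ and equals $d$ at $t=1$. Setting $N \coloneqq \lfloor d/(2\epsilon) \rfloor$, the intermediate value theorem supplies times $0 = t_0 < t_1 < \dots < t_N$ with $d_X(p, \gamma(t_k)) = 2k\epsilon$, and we write $p_k \coloneqq \gamma(t_k) \in C$. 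The reverse triangle inequality then gives
\[ d_X(p_j, p_k) \geq \abs{d_X(p,p_j) - d_X(p, p_k)} = 2\epsilon \abs{j-k} \geq 2\epsilon \qquad (j\neq k). \]

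Next I will exploit the injectivity radius hypothesis. Since $p_k \in C$, the ball $B_X(p_k,\epsilon) \subset X$ is evenly covered by the universal cover $\HH^2 \to X$, so it is isometric to a hyperbolic disc of radius $\epsilon$ and therefore has $X$-volume equal to $v(B_o(\epsilon))$. The pairwise distance estimate above makes the balls $\{B_X(p_k,\epsilon)\}_{k=0}^N$ pairwise disjoint, and each one is contained in $N_\epsilon(C)$ because its centre lies in $C$. Summing volumes yields
\[ (N+1)\cdot v(B_o(\epsilon)) \leq \vol_X(N_\epsilon(C)). \]
Since $N+1 > d/(2\epsilon)$, this rearranges to
\[ d \leq \frac{2\epsilon}{v(B_o(\epsilon))} \cdot \vol_X(N_\epsilon(C)) \leq \frac{4\epsilon}{v(B_o(\epsilon))} \cdot \vol_X(N_\epsilon(C)), \]
and taking the supremum over $p,q\in C$ completes the proof.

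There is no real obstacle here: the only point that requires a little care is that the chosen path stays inside $C$, so that each chosen centre $p_k$ inherits $\InjRad_X(p_k)\geq\epsilon$ and each ball $B_X(p_k,\epsilon)$ genuinely sits inside $N_\epsilon(C)$. Path-connectedness (rather than mere connectedness) of $C$ is exactly what makes this work; if only connectedness were assumed one would need a chaining argument through a maximal $\epsilon$-separated net, but for the statement as written the direct path-and-IVT argument above suffices and in fact gives the slightly sharper constant $2\epsilon$.
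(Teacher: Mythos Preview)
Your argument is correct and in fact yields the sharper constant $2\epsilon$ in place of $4\epsilon$. The paper proceeds differently: it invokes Zorn's Lemma to produce a maximal $2\epsilon$-separated subset $Y_0\subset C$, observes that the $\epsilon$-balls about $Y_0$ are disjoint in $N_\epsilon(C)$ to bound $\#Y_0$, and then uses that the $2\epsilon$-balls about $Y_0$ cover $C$ to bound $\diam_X(C)$ by $4\epsilon\cdot\#Y_0$. Your approach bypasses the net entirely by manufacturing the separated set directly along a single path via the intermediate value theorem; this is more elementary (no Zorn, no covering/chaining step) and tailored exactly to the path-connectedness hypothesis, at the cost of being less reusable if one later wanted, say, a bound on the number of components of a thick part. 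Either route is fine for the application in Lemma~\ref{lem:L1ConvTruncDom}, where only some finite diameter bound is needed.
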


\begin{proof}
	Note that for every $x \in C$ the ball $B_x(\epsilon) \subset N_\epsilon(C)$ is embedded and has the same measure as a ball of radius $\epsilon$ in the hyperbolic plane.
	
	Let us now consider
	$$ S = \{ Y \subset C : d(y_1, y_2) \geq 2 \epsilon \quad \forall y_1, y_2 \in Y, y_1 \neq y_2 \}.$$
	By Zorn's Lemma we may choose a maximal element $Y_0 \in S$ with respect to inclusion $\subseteq$. We claim that the collection of balls $ \{ B_y(2 \epsilon) : y \in Y_0 \} $ covers $C$. Indeed, if there is $y' \in C$ which is not in any $\{B_y(2\epsilon)\}_{y \in Y_o}$ it has distance greater or equal than $2 \epsilon$ from any point $y \in Y_0$. But then $\{y'\} \cup Y_0 \in S$ which contradicts the maximality of $Y_0$.
	
	On the other hand the balls of radius $\epsilon$ centered at $y \in Y_0$ are disjoint by definition of $S$ whence
	\[ \bigsqcup_{y \in Y_0} B_\epsilon(y) \subseteq N_\epsilon(C), \]
	such that
	\[ \vol_X(N_\epsilon(C)) \geq \sum_{y \in Y_0} \vol_X(B_\epsilon(y)) = \# Y_0 \cdot v(B_o(\epsilon)). \]
	It follows that
	\[\# Y_0\leq \frac{\vol_X(N_\epsilon(C))}{v(B_o(\epsilon))}.\]
	
	Any path in $C$ is covered by $\{ B_y(2 \epsilon) \}_{y \in Y_0}$ and we obtain that
	\[ \diam_X(C) \leq 4 \epsilon \cdot \#Y_0  \leq  \frac{4 \epsilon}{v(B_o(\epsilon))} \cdot \vol_X(N_\epsilon(C)) .\]
	
\end{proof}

By the previous Lemmas, we obtain an upper bound on the diameter of the thick-part of the convex core, as follows.

\begin{lemma}\label{lem:concrete_diam_bound_thick_part}
	Let $(\Gamma_n)_{n \in \NN} \subset \mc{D}(\Sigma)$ be a sequence converging to $\Gamma \in \mc{D}(\Sigma)$. Then there is $\epsilon >0$ such that for every $0< \epsilon' < \epsilon$ the $\epsilon'$-thick-part of the convex core
	\[ C(\Gamma_n)_{\geq \epsilon'} \coloneqq C(\Gamma_n) \cap (\Gamma_n \backslash \HH^2)_{\geq \epsilon'} = \{ x \in C(\Gamma_n) \, | \, \InjRad_{\Gamma_n \backslash \HH^2}(x) \geq \epsilon' \} \]
	is path-connected for every $n \in \NN$.
	
	In particular, for every $0< \epsilon ' < \epsilon$ there is $R=R(\epsilon')>0$ such that 
	\[ \diam_{\Gamma_n \backslash \HH^2}(C(\Gamma_n)_{\geq \epsilon'}) \leq R  \]
	for every $n \in \NN$.
\end{lemma}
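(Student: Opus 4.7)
The plan is as follows. By Proposition \ref{prop:geometric_equals_algebraic}, I first lift the convergence $\Gamma_n \to \Gamma$ to algebraically convergent representations $\rho_n \to \rho$ in $\Rep^*(\Sigma)$ with $\im \rho_n = \Gamma_n$ and $\im \rho = \Gamma$. Lemma \ref{lem:SmallLengthImpliesPeripheral} then produces a constant $\epsilon_0 > 0$ such that for every $n \in \NN$ and every $\alpha \in \pi_1(\Sigma)$, the inequality $\ell(\rho_n(\alpha)) < \epsilon_0$ forces $\alpha$ to be peripheral. Shrinking $\epsilon_0$ below the two-dimensional Margulis constant if necessary, I take $\epsilon$ to be this $\epsilon_0$.

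For the first assertion, fix $0 < \epsilon' < \epsilon$. By the thick-thin decomposition, the $\epsilon'$-thin part of $\Gamma_n \backslash \HH^2$ is a disjoint union of standard cusp neighborhoods and of Margulis tubes around primitive closed geodesics of length less than $\epsilon'$. The defining property of $\epsilon$ forces every such short geodesic to be peripheral, hence to be the waist of a funnel and therefore a boundary component of $C(\Gamma_n)$. Intersecting with the convex core, the $\epsilon'$-thin portion of $C(\Gamma_n)$ is therefore a disjoint union of collar neighborhoods, one around each cusp and one around each short boundary geodesic of $C(\Gamma_n)$. Excising these collars from the connected compact surface-with-boundary $C(\Gamma_n)$ preserves connectedness, so $C(\Gamma_n)_{\geq \epsilon'}$ is path-connected.

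For the diameter bound I apply Lemma \ref{lem:diam_bound_thick_part} with $C = C(\Gamma_n)_{\geq \epsilon'}$, whose pointwise injectivity radius in $\Gamma_n \backslash \HH^2$ is at least $\epsilon'$ by definition. What remains is a uniform-in-$n$ upper bound on $\vol(N_{\epsilon'}(C(\Gamma_n)_{\geq \epsilon'}))$. From the inclusion $N_{\epsilon'}(C(\Gamma_n)_{\geq \epsilon'}) \subseteq N_{\epsilon'}(C(\Gamma_n))$, an elementary computation in the funnel charts gives
\[ \vol(N_{\epsilon'}(C(\Gamma_n))) \leq \vol(C(\Gamma_n)) + \sinh(\epsilon') \cdot L_n,\]
where $L_n$ is the total length of $\partial C(\Gamma_n)$. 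The first summand equals $2\pi |\chi(\Sigma)|$ and is independent of $n$. The second is controlled by writing $L_n = \sum_{c} \ell(\rho_n(c))$ as a sum over the finitely many primitive peripheral classes $c \in \pi_1(\Sigma)$ whose image under $\rho_n$ is hyperbolic; algebraic convergence $\rho_n(c) \to \rho(c)$ and continuity of $\ell$ bound $L_n$ uniformly. Plugging these estimates into Lemma \ref{lem:diam_bound_thick_part} yields the uniform constant $R(\epsilon')$.

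The principal subtlety I expect is the identification in the second paragraph of the $\epsilon'$-thin part of the convex core. Tubes around short peripheral hyperbolic geodesics intersect $C(\Gamma_n)$ only as half-collars of the corresponding boundary geodesics, which is exactly the geometric content of the Collar Lemma \ref{lem:CollarLemma}; and the crucial point that no essential Margulis tube appears at level $\epsilon'$, uniformly in $n$, is precisely what Lemma \ref{lem:SmallLengthImpliesPeripheral} provides. Everything else is bookkeeping with the thick-thin decomposition and the formula of Lemma \ref{lem:diam_bound_thick_part}.
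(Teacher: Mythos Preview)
Your proposal is correct and follows essentially the same approach as the paper: lift to representations via Proposition~\ref{prop:geometric_equals_algebraic}, invoke Lemma~\ref{lem:SmallLengthImpliesPeripheral} (with $\epsilon$ below the Margulis constant) to force all short geodesics to be peripheral and hence boundary components of the convex core, deduce path-connectedness of the thick part, and then feed a uniform volume bound on $N_{\epsilon'}(C(\Gamma_n))$ into Lemma~\ref{lem:diam_bound_thick_part}. The only cosmetic differences are that the paper bounds $N_\epsilon(C(\Gamma_n))$ rather than $N_{\epsilon'}(C(\Gamma_n))$ and phrases the half-collar volume bound abstractly as a constant $V$ rather than your explicit $\sinh(\epsilon')\cdot L_n$; also note that $C(\Gamma_n)$ need not be compact (it may have cusps), though your connectedness argument is unaffected.
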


\begin{proof}
	We may choose $\rho_n \to \rho \in \Rep^*(\Sigma)$ such that $\im \rho_n = \Gamma_n$ and $\im \rho = \Gamma$. Let $\epsilon>0$ be as in Lemma \ref{lem:SmallLengthImpliesPeripheral}. Without loss of generality we may assume that $\epsilon$ is smaller than the Margulis constant; see \cite{benedettipetronio} or \cite{martelli} for example. Let $0 < \epsilon' < \epsilon$, $n \in \NN$, and let $T \subset (\Gamma_n \backslash \HH^2)_{<{\epsilon'}}$ be a tube component of the $\epsilon'$-thin-part. Let $\alpha_n \in \pi_1(\Sigma)$ such that $\rho_n(\alpha_n) \in \Gamma_n$ corresponds to the waist geodesic of $T$. Then $\ell(\rho_n(\alpha_n)) < \epsilon' < \epsilon$ such that $\alpha_n$ is peripheral. Therefore, all tube components of the $\epsilon'$-thin-part are peripheral such that $C(\Gamma_n)_{\geq \epsilon'}$ is path-connected.
	
	We want to apply Lemma \ref{lem:diam_bound_thick_part} to $C(\Gamma_n)_{\geq \epsilon'}$. Note that $N_{\epsilon'}(C(\Gamma_n)_{\geq \epsilon'}) \subseteq N_\epsilon(C(\Gamma_n))$. Further, $N_\epsilon(C(\Gamma_n)) \setminus C(\Gamma_n)$ consists of half-collars of width $\epsilon$ about the boundary curves of $C(\Gamma_n)$. Since the lengths of the boundary curves converge there is a uniform bound $V>0$ such that $\vol_{\Gamma_n \backslash \HH^2}(N_\epsilon(C(\Gamma_n)) \setminus C(\Gamma_n)) \leq V$ for all $n \in \NN$. Recall that
	$\vol_{\Gamma_n \backslash \HH^2}(C(\Gamma_n)) = 2 \pi \abs{\chi(\Sigma)}$, such that,
	\begin{align*}
	\vol_{\Gamma_n \backslash \HH^2}( N_{\epsilon}(C(\Gamma_n)_{\geq \epsilon'})) &\leq \vol_{\Gamma_n \backslash \HH^2}(C(\Gamma_n)) + \vol_{\Gamma_n \backslash \HH^2}(N_\epsilon(C(\Gamma_n)) \setminus C(\Gamma_n))\\ 
	&\leq 2\pi \abs{\chi(\Sigma)} + V.
	\end{align*}

	Setting 
	\[ R(\epsilon') = \frac{4 \epsilon'}{v(B_o(\epsilon'))} \cdot \left( 2\pi \abs{\chi(\Sigma)} + V \right) \]
	the assertion follows from Lemma \ref{lem:diam_bound_thick_part}.
\end{proof} 

Finally, we will need to know the area of the residual thin-parts.

\begin{lemma} \label{lem:MeasureThinParts}
	\begin{enumerate}
		\item Let $\delta > 0$ and let $\gamma \in G \cong \Isom^+(\HH^2)$ be defined by $\gamma(z) = z+1$ for every $z \in \HH^2$. Consider the fundamental domain for the corresponding cusp region
		\[ C_\delta := \{ z = x + i y \in \HH^2 \, | \,0 \leq x \leq 1, d(z, \gamma(z)) \leq \delta\} , \]
		that consists of all the points that are moved less than  $\delta$ by $\gamma$.
		
		Then 
		\[ v(C_\delta) = 2 \sinh(\delta/2).\]
		
		\item Let $\delta > \delta_0 > 0$ and $\gamma \in G \cong \Isom^+(\HH^2)$ be defined by $\gamma(z) = e^{\delta_0} z$ for every $z \in \HH^2$. Consider the fundamental domain for the corresponding funnel
		\[ F_\delta := \{ z = x + i y \in \HH^2 \, | \, x \geq 0, 1 \leq \abs{z} \leq e^{\delta_0}, d(z, \gamma(z)) \leq \delta\}  \]
		to the right of the axis $\ax{\gamma} = i \RR$, that consists of all the points that are moved less than $\delta$ by $\gamma$.
		
		Then
		\[ v(F_\delta) \leq 2 \sinh(\delta/2). \]
	\end{enumerate}
	
\end{lemma}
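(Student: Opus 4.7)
The plan is to reduce both claims to a direct computation using the upper-half-plane distance formula
\[ \cosh d(z,w) = 1 + \frac{|z-w|^2}{2 \operatorname{Im}(z) \operatorname{Im}(w)}, \]
or equivalently $\sinh^2(d(z,w)/2) = \frac{|z-w|^2}{4 \operatorname{Im}(z)\operatorname{Im}(w)}$.

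For part (1), I would plug $z = x+iy$ and $\gamma(z) = z+1$ into the formula above to get $\sinh(d(z,\gamma z)/2) = \frac{1}{2y}$. The defining inequality $d(z,\gamma z) \leq \delta$ then becomes $y \geq \frac{1}{2\sinh(\delta/2)}$, so
\[ C_\delta = \bigl\{\, x+iy \st 0 \leq x \leq 1,\; y \geq \tfrac{1}{2\sinh(\delta/2)}\,\bigr\}. \]
Integrating the hyperbolic area element $\frac{dx\,dy}{y^2}$ gives $v(C_\delta) = 2\sinh(\delta/2)$ immediately.

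For part (2), I would switch to polar coordinates $z = r e^{i\theta}$, so that $\gamma(z) = e^{\delta_0} z$ has $|\gamma z - z| = (e^{\delta_0}-1) r$ and $\operatorname{Im}(z)\operatorname{Im}(\gamma z) = e^{\delta_0} r^2 \sin^2\theta$. A short simplification using $\frac{(e^{\delta_0}-1)^2}{e^{\delta_0}} = 4\sinh^2(\delta_0/2)$ yields
\[ \sinh(d(z,\gamma z)/2) = \frac{\sinh(\delta_0/2)}{\sin\theta}, \]
so $d(z,\gamma z) \leq \delta$ is equivalent to $\sin\theta \geq s$ where $s \coloneqq \sinh(\delta_0/2)/\sinh(\delta/2) \in (0,1]$. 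The fundamental domain $F_\delta$ then corresponds to $r \in [1, e^{\delta_0}]$ and $\theta \in [\arcsin(s), \pi/2]$. Using the polar form of the area element $\frac{dr\,d\theta}{r \sin^2\theta}$, the integral factorizes and evaluates to
\[ v(F_\delta) = \delta_0 \cdot \cot(\arcsin(s)) = \frac{\delta_0}{\sinh(\delta_0/2)} \cdot \sqrt{\sinh^2(\delta/2) - \sinh^2(\delta_0/2)}. \]

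The last step, and the only non-mechanical one, is to bound this expression by $2\sinh(\delta/2)$. I would use the elementary inequality $\sinh(x) \geq x$ for $x \geq 0$, which gives $\delta_0 \leq 2\sinh(\delta_0/2)$, so
\[ v(F_\delta) \leq \frac{2\sinh(\delta_0/2)}{\sinh(\delta_0/2)} \cdot \sqrt{\sinh^2(\delta/2) - \sinh^2(\delta_0/2)} \leq 2\sinh(\delta/2). \]
The main (mild) obstacle is spotting this final inequality; everything else is a straightforward plug-and-chug with the distance formula and area element in the upper half-plane model.
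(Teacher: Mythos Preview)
Your proof is correct and follows essentially the same approach as the paper: the same distance formula, the same reduction of the defining inequality to $y \geq 1/(2\sinh(\delta/2))$ in part~(i) and $\sin\theta \geq \sinh(\delta_0/2)/\sinh(\delta/2)$ in part~(ii), the same polar-coordinate integration, and the same final use of $x \leq \sinh(x)$. The only cosmetic difference is that in part~(ii) you evaluate $\cot(\arcsin s)$ exactly, whereas the paper bounds $\cos(\alpha_\delta) \leq 1$ before applying the $\sinh$ inequality; both routes arrive at $2\sinh(\delta/2)$ identically.
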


\begin{proof}
	Recall the following formulas in hyperbolic geometry.
	\[ \sinh(d(z,w)/2) = \frac{\abs{z-w}}{2 \sqrt{\Im(z) \Im(w)}} \]
	for every $z,w \in \HH^2$, and
	\[ v(A) = \int_A \frac{1}{y^2} \, dx \,dy\]
	for every Borel set $A \subseteq \HH^2$; see \cite{beardon} for example.
	
	\begin{enumerate}
		\item We compute that $z = x + iy \in C_\delta$ if and only if
		\[ \sinh(\delta/2) \geq \sinh(d(z,z+1)/2) = \frac{1}{2y} \iff y \geq \frac{1}{2 \sinh(\delta/2)} =: y_\delta. \]
		Hence,
		\begin{align*}
		v(C_\delta) = \int_0^1 \int_{y_\delta}^\infty \frac{1}{y^2} \, dy \, dx =  \frac{1}{y_\delta} = 2 \sinh(\delta/2).
		\end{align*}
		
		\item For $z = r e^{i \alpha} \in \HH^2$, $r > 0, \alpha \in (0, \pi)$, we have that  $d(z,\gamma(z)) \leq \delta$ if and only if
		\begin{align*}
		\sinh(\delta/2) &\geq \sinh(d(z,e^{\delta_0} z)/2) =  \frac{\abs{ r e^{i \alpha} - r e^{\delta_0} e^{i \alpha}}}{2 \sqrt{r \sin(\alpha) \cdot r e^{\delta_0} \sin(\alpha)}} \\
		& = \frac{1}{\sin(\alpha)} \frac{e^{\delta_0}-1}{2  e^{\delta_0/2}} = \frac{\sinh(\delta_0/2)}{\sin(\alpha)} \\
		\iff & \sin(\alpha) \geq \frac{\sinh(\delta_0/2)}{\sinh(\delta/2)}.
		\end{align*}
		There is a unique $\alpha_\delta \in (0, \pi/2)$ such that $\sin(\alpha_\delta) = \frac{\sinh(\delta_0/2)}{\sinh(\delta/2)}$.
		
		Using polar coordinates we obtain
		\begin{align*}
		v(F_{\delta}) &= \int_{F_\delta} \frac{1}{y^2} \, dx \, dy =  \int_{\alpha_\delta}^{\pi/2} \int_1^{e^{\delta_0}} \frac{r}{r^2 \sin^2 \varphi} \, dr  \, d \phi\\
		&= \delta_0 \cdot \int_{\alpha_\delta}^{\pi/2} \frac{1}{\sin^2 \varphi} \, d \phi
		= \delta_0 \cdot \left[ \cot \phi \right]_{\phi=\alpha_\delta}^{\pi/2}\\
		&= \delta_0 \cdot \cot(\alpha_\delta) = \delta_0 \cdot \frac{\cos(\alpha_\delta)}{\sin(\alpha_\delta)}
		\leq \frac{\delta_0}{\sin(\alpha_\delta)}\\
		&= \frac{\delta_0}{\sinh(\delta_0/2)} \sinh(\delta/2) \leq 2 \sinh(\delta/2)
		\end{align*}
		where we used in the last inequality that $x \leq \sinh(x)$ for all $x \geq 0$.
	\end{enumerate}
	
\end{proof}

We are ready to prove Lemma \ref{lem:L1ConvTruncDom} now. 

\begin{proof}[Proof of Lemma \ref{lem:L1ConvTruncDom}]
	By definition $\hat{D}_o(\Gamma) = \tilde{C}(\Gamma) \cap D_o(\Gamma)$ such that
	\[ \II_{\hat{D}_o(\Gamma)} = \II_{\tilde{C}(\Gamma)} \cdot \II_{D_o(\Gamma)}. \]
	By Lemma \ref{lem:CoreConv} and Lemma \ref{lem:DomConv} we have that
	\[ \II_{\tilde{C}(\Gamma_n)}(x) \cdot \II_{D_o(\Gamma_n)}(x) \to  \II_{\tilde{C}(\Gamma)}(x) \cdot \II_{D_o(\Gamma)}(x)   \qquad (n \to \infty)\]
	for every $x \in \HH^2\setminus (\partial \tilde{C}(\Gamma) \cup \partial D_o(\Gamma))$. Note that $\partial \tilde{C}(\Gamma) \cup \partial D_o(\Gamma)$ has measure zero.
	
	Let $\epsilon>0$ be as in Lemma \ref{lem:concrete_diam_bound_thick_part} and let $0 < \epsilon' < \epsilon$. Then there is $R = R(\epsilon')>0$ such that $\pi_n(\ol{B}_o(R))$ contains $C(\Gamma_n)_{\geq \epsilon'}$ for every $n \in \NN$, where $\pi_n \colon \HH^2 \longrightarrow \Gamma_n\backslash\HH^2$ is the quotient map. The complement $C(\Gamma_n) \setminus \pi_n(\ol{B}_o(R))=\bigsqcup_{k=1}^l W_k$ is a disjoint union of subsets $W_1, \ldots, W_l$ of peripheral cusp or tube components of the $\epsilon'$-thin-part. If $\Sigma$ has genus $g$ and $p$ punctures there are at most $l \leq p$ such components. By Lemma \ref{lem:MeasureThinParts} we have that
	\[ \vol(W_k) \leq 2\sinh(\epsilon'/2) \]
	for every $k=1, \ldots, l$, such that
	\[ \vol(C(\Gamma_n) \setminus \pi_n(\ol{B}_o(R))) \leq 2 p \sinh(\epsilon'/2) \to 0 \qquad (\epsilon' \to 0).\]
	Hence,
	\[ v(\hat{D}_o(\Gamma_n) \setminus \ol{B}_o(R)) \leq  2 p \sinh(\epsilon'/2) \to 0 \qquad (\epsilon' \to 0).\]
	
	Let $\epsilon'' >0$ be arbitrary, and choose $\epsilon'>0$ and $R=R(\epsilon')>0$ above, so that 
	\[v(\hat{D}_o(\Gamma_n) \setminus \ol{B}_o(R)) \leq \frac{\epsilon''}{3} \qquad \text{and}  \qquad v(\hat{D}_o(\Gamma) \setminus \ol{B}_o(R)) \leq \frac{\epsilon''}{3}.\]
	
	Then we compute that
	\begin{align*}
	& \quad \int_{\HH^2} \abs{ \II_{\hat{D}_o(\Gamma_n)}(x) - \II_{\hat{D}_o(\Gamma)}(x)} \, dv(x) \\
	&\leq \int_{\HH^2} \abs{ \II_{\hat{D}_o(\Gamma_n)}(x) - \II_{\hat{D}_o(\Gamma_n) \cap \ol{B}_o(R)}(x)} \, dv(x) \\
	&+ \int_{\HH^2} \abs{ \II_{\hat{D}_o(\Gamma_n) \cap \ol{B}_o(R)}(x) - \II_{\hat{D}_o(\Gamma) \cap \ol{B}_o(R)}(x)} \, dv(x) \\
	&+ \int_{\HH^2} \abs{ \II_{\hat{D}_o(\Gamma) \cap \ol{B}_o(R)}(x) - \II_{\hat{D}_o(\Gamma)}(x)} \, dv(x) \\
	&\leq \frac{2}{3} \epsilon'' + \int_{\HH^2} \II_{\ol{B}_o(R)}(x) \cdot \abs{ \II_{\hat{D}_o(\Gamma_n)}(x) - \II_{\hat{D}_o(\Gamma)}(x)} \, dv(x) .
	\end{align*}
	Observe that the function $\II_{\ol{B}_o(R)}(x) \cdot \abs{ \II_{\hat{D}_o(\Gamma_n)}(x) - \II_{\hat{D}_o(\Gamma)}(x)} $ converges pointwise almost everywhere to $0$ and is dominated by the $L^1$-function $\II_{\ol{B}_o(R)}(x)$. Hence, by the dominated convergence theorem we conclude that
	\[\int_{\HH^2} \abs{ \II_{\hat{D}_o(\Gamma_n)}(x) - \II_{\hat{D}_o(\Gamma)}(x)} \, dv(x) < \epsilon''\]
	for large $n$.
	
	Because $\epsilon''>0$ was arbitrary, the asserted convergence in $L^1(\HH^2,v)$ follows.
\end{proof}

	
	\bibliographystyle{amsalpha}
	\bibliography{bibliography}

\newcommand{\etalchar}[1]{$^{#1}$}
\providecommand{\bysame}{\leavevmode\hbox to3em{\hrulefill}\thinspace}
\providecommand{\MR}{\relax\ifhmode\unskip\space\fi MR }
\providecommand{\MRhref}[2]{%
  \href{http://www.ams.org/mathscinet-getitem?mr=#1}{#2}
}
\providecommand{\href}[2]{#2}
\begin{thebibliography}{ABB{\etalchar{+}}17}

\bibitem[ABB{\etalchar{+}}17]{7samurai}
M.~Abert, N.~Bergeron, I.~Biringer, T.~Gelander, N.~Nikolov, J.~Raimbault, and
  I.~Samet, \emph{On the growth of l$^2$-invariants for sequences of lattices
  in lie groups}, Annals of Mathematics (2017), 711--790.

\bibitem[Abi80]{abikoff}
W.~Abikoff, \emph{The real analytic theory of teichmüller space}, Lecture
  notes in mathematics, vol. 820, Springer, 1980 (eng).

\bibitem[AGV14]{AGV14gel}
M.~Abért, Y.~Glasner, and B.~Virág, \emph{Kesten’s theorem for invariant
  random subgroups}, Duke Mathematical Journal \textbf{163} (2014), no.~3,
  465–488.

\bibitem[Bea83]{beardon}
A.~F. Beardon, \emph{The geometry of discrete groups}, Springer-Verlag New
  York, 1983.

\bibitem[BP92]{benedettipetronio}
R.~Benedetti and C.~Petronio, \emph{Lectures on hyperbolic geometry},
  Universitext, Springer Berlin Heidelberg, Berlin, Heidelberg, 1992 (eng).

\bibitem[Bus10]{buser}
P.~Buser, \emph{Geometry and spectra of compact riemann surfaces}, Springer
  Science \& Business Media, 2010.

\bibitem[CEM06]{canary}
R.~D. Canary, D.~B.~A. Epstein, and A.~Marden, \emph{Fundamentals of hyperbolic
  manifolds : Selected expositions}, London Mathematical Society lecture note
  series, vol. 328, Cambridge University Press, Cambridge, 2006 (eng).

\bibitem[FM12]{farbmarg}
B.~Farb and D.~Margalit, \emph{A primer on mapping class groups}, Princeton
  mathematical series, vol.~49, Princeton University Press, Princeton, NJ, 2012
  (eng).

\bibitem[Gel15]{gelanderIRS15}
T.~Gelander, \emph{A lecture on invariant random subgroups}, arXiv preprint
  arXiv:1503.08402v2 (2015).

\bibitem[Gel18]{gelanderICM}
\bysame, \emph{A view on invariant random subgroups}, Proc. Int. Cong. of Math,
  vol.~1, 2018, pp.~1317--1340.

\bibitem[GL16]{locrig16}
T.~Gelander and A.~Levit, \emph{Local rigidity of uniform lattices}, arXiv
  preprint arXiv:1605.01693v3 (2016).

\bibitem[Har74]{harveyshort}
W.~J. Harvey, \emph{Chabauty spaces of discrete groups}, Discontinuous groups
  and Riemann surfaces (Proc. Conf., Univ. Maryland, College Park, Md., 1973),
  1974, pp.~239--246.

\bibitem[Har77]{harvey77}
\bysame, \emph{Spaces of discrete groups}, Discrete groups and automorphic
  functions: proceedings of an instructional conference (1977), 295--348.

\bibitem[Har81]{harveyCurveComplex}
\bysame, \emph{Boundary structure of the modular group}, Riemann surfaces and
  related topics: {P}roceedings of the 1978 {S}tony {B}rook {C}onference
  ({S}tate {U}niv. {N}ew {Y}ork, {S}tony {B}rook, {N}.{Y}., 1978), Ann. of
  Math. Stud., vol.~97, Princeton Univ. Press, Princeton, N.J., 1981,
  pp.~245--251. \MR{624817}

\bibitem[HK14]{hubbardkoch}
J.~H. Hubbard and S.~Koch, \emph{An analytic construction of the
  deligne-mumford compactification of the moduli space of curves}, Journal of
  Differential Geometry \textbf{98} (2014), no.~2, 261--313.

\bibitem[LZ18]{loftinzhang}
J.~Loftin and T.~Zhang, \emph{Coordinates on the augmented moduli space of
  convex rp\^{} 2 structures}, arXiv preprint arXiv:1812.11389 (2018).

\bibitem[Mar16]{martelli}
B.~Martelli, \emph{An introduction to geometric topology}, arXiv preprint
  arXiv:1610.02592 (2016).

\bibitem[Ser03]{serre}
J.~P. Serre, \emph{Trees}, [corrected 2nd printing] ed., Springer monographs in
  mathematics, Springer, Berlin, 2003 (eng).

\end{thebibliography}
\end{document}